\documentclass[11pt, a4paper]{amsart}


\usepackage[T1]{fontenc}
\usepackage{microtype}

\usepackage{amsmath}								
\usepackage{amssymb}
\usepackage{amsthm}
\usepackage{amscd}
\usepackage{amsfonts}
\usepackage{mathptmx}
\usepackage{stmaryrd}

\usepackage{euler}
\usepackage{extarrows}
\DeclareMathAlphabet{\mathcal}{OMS}{cmsy}{m}{n}

\usepackage[colorlinks, linktocpage, citecolor = blue, linkcolor = blue]{hyperref}
\usepackage{color}

\usepackage{tikz}									
\usetikzlibrary{matrix}
\usetikzlibrary{patterns}
\usetikzlibrary{matrix}
\usetikzlibrary{positioning}
\usetikzlibrary{decorations.pathmorphing}
\usetikzlibrary{cd}

\usepackage{fullpage}
\usepackage{enumerate}




\newtheorem*{theorem*}{Theorem}

\newtheorem{maintheorem}{Theorem}[section]
\newtheorem{maincorollary}[maintheorem]{Corollary}
									
\newtheorem{theorem}{Theorem}[section]
\newtheorem{lemma}[theorem]{Lemma}
\newtheorem{proposition}[theorem]{Proposition}
\newtheorem{corollary}[theorem]{Corollary} 

\theoremstyle{definition}
\newtheorem{definition}[theorem]{Definition}

\newtheorem{remark}[theorem]{Remark}


\newtheoremstyle{myitemstyle}						
	{}			
	{}			
	{}			
	{}			
	{}			
	{.}			
	{ }			
	{}			
\theoremstyle{myitemstyle}
\newtheorem{myitemthm}{}
\newenvironment{myitem}[1]{\begin{myitemthm} \textbf{#1}}{\end{myitemthm}}


\newcommand{\twoheadlongrightarrow}{\relbar\joinrel\twoheadrightarrow}

\newcommand{\R}{\mathbb{R}}
\newcommand{\Rbar}{\overline{\mathbb{R}}}
\newcommand{\Z}{\mathbb{Z}}

\newcommand{\C}{\mathbb{C}}

\newcommand{\N}{\mathbb{N}}
\newcommand{\PP}{\mathbb{P}}

\newcommand{\G}{\mathbb{G}}

\newcommand{\Sigmabar}{\overline{\Sigma}}
\newcommand{\sigmabar}{\overline{\sigma}}
\newcommand{\Mbar}{\overline{M}}

\newcommand{\calMbar}{\overline{\mathcal{M}}}

\newcommand{\calA}{\mathcal{A}}

\newcommand{\calC}{\mathcal{C}}
\newcommand{\calD}{\mathcal{D}}

\newcommand{\calM}{\mathcal{M}}
\newcommand{\calO}{\mathcal{O}}

\newcommand{\calS}{\mathcal{S}}
\newcommand{\calT}{\mathcal{T}}
\newcommand{\calSbar}{\overline{\mathcal{S}}}
\newcommand{\Tbar}{\overline{T}}
\newcommand{\Sbar}{\overline{S}}
\newcommand{\calUbar}{\overline{\mathcal{U}}}

\newcommand{\calX}{\mathcal{X}}

\newcommand{\calTbar}{\overline{\mathcal{T}}}

\newcommand{\scrX}{\mathscr{X}}
\newcommand{\scrD}{\mathscr{D}}
\newcommand{\scrU}{\mathscr{U}}
\newcommand{\scrV}{\mathscr{V}}

\DeclareMathOperator{\Spec}{Spec}
\DeclareMathOperator{\Hom}{Hom}

\DeclareMathOperator{\Aut}{Aut}
\DeclareMathOperator{\Inn}{Inn}

\DeclareMathOperator{\val}{val}

\DeclareMathOperator{\LOG}{LOG}
\DeclareMathOperator{\FAN}{FAN}

\DeclareMathOperator{\HOM}{HOM}

\DeclareMathOperator{\Out}{Out}

\DeclareMathOperator{\PGL}{PGL}
\DeclareMathOperator{\trop}{trop}

\DeclareMathOperator{\CV}{CV}

\DeclareMathOperator{\CVbar}{\overline{CV}}

\DeclareMathOperator{\MCG}{MCG}


\title{A non-Archimedean analogue of Teichm\"uller space and its tropicalization}

\date{}

\author{Martin Ulirsch}
\address{Institut f\"ur Mathematik, Goethe--Universit\"at Frankfurt,
Robert-Mayer-Str. 6--8,
60325 Frankfurt am Main, Germany}
\email{ulirsch@math.uni-frankfurt.de}


\begin{document}

\maketitle

\begin{abstract}
In this article we use techniques from tropical and logarithmic geometry to construct a non-Archimedean analogue of \emph{Teichm\"uller space} $\overline{\calT}_g$ whose points are pairs consisting of a stable projective curve over a non-Archimedean field and a Teichm\"uller marking of the topological fundamental group of its Berkovich analytification. This construction is closely related to and inspired by the classical construction of a non-Archimedean Schottky space for Mumford curves by Gerritzen and Herrlich. We argue that the skeleton of non-Archimedean Teichm\"uller space is precisely the tropical Teichm\"uller space introduced by Chan-Melo-Viviani as a simplicial completion of Culler-Vogtmann Outer space. As a consequence, Outer space turns out to be a strong deformation retract of the locus of smooth Mumford curves in $\overline{\mathcal{T}}_g$.
\end{abstract}

\setcounter{tocdepth}{1}
\tableofcontents


\section*{Introduction}
Let $g\geq 2$ and suppose for now that we are working over $\C$. Teichm\"uller space $\calT_{g,\C}$ is the universal cover of the moduli space $\calM_g$ of smooth curves of genus $g$. It is a complex analytic space that functions as a fine moduli space of smooth curves $X$ (of genus $g$) together with a \emph{Teichm\"uller marking}, that is a an equivalence $\phi\colon \pi_1(X)\xrightarrow{\sim} \Pi_g$\footnote{Here $\pi_1(X)$ denotes the fundamental groupoid of $X$; since $X$ is connected, an equivalence $\phi\colon \pi_1(X)\xrightarrow{\sim}\Pi_g$ amounts to choosing an outer isomorphism class $\pi_1(X,x)\xrightarrow{\sim}\Pi_g$ for one (and hence all) base points $x\in X$.}, where 
\begin{equation*}
\Pi_g=\big\langle\alpha_1,\beta_1,\ldots, \alpha_g,\beta_g\big\vert [\alpha_1,\beta_1]\cdots [\alpha_g,\beta_g]=1\big \rangle
\end{equation*}
denotes the fundamental group of a fixed Riemann surface of genus $g$. 

The \emph{moduli space $M_g^{trop}$ of stable tropical curves} of genus $g$ is a combinatorial analogue of $\calM_g$ that captures the combinatorics of the dual complex of the Deligne-Mumford compactification $\calMbar_g$ of $\calM_g$. A candidate for a tropical analogue of Teichm\"uller space is \emph{Outer space} $\CV_g$ in the sense of Culler-Vogtmann (see \cite{CullerVogtmann}), which arrived in the world of mathematics well before the recent spark in interest in tropical geometry (see \cite[Section 5]{Caporaso_comparingmoduli}). Outer space is a moduli space of metric graphs $\Gamma$ together with an equivalence $\phi\colon \pi_1(\Gamma)\xrightarrow{\sim}F_g$, where $F_g$ denotes the free group on $g$ generators. In \cite{CMV} Chan, Melo, and Viviani, building on \cite{CullerVogtmann}, construct a tropical analogue $T_g^{trop}$ of Teichm\"uller space as a natural simplicial completion of Outer space $\CV_g$ by allowing contractions of loops to vertices with integer weights. 

Denote by $\calM_g^{an}$ the non-Archimedean analytic space associated to $\calM_g$ (in the sense of Berkovich), where the base field is carrying the trivial absolute value. In \cite{ACP} (based on earlier work in \cite{Viviani_tropvscomp}) Abramovich, Caporaso, and Payne describe the connection between the algebraic and the tropical moduli space: they show that the moduli space of tropical curves is the target of a natural tropicalization map $\trop_g\colon\calM_g^{an}\rightarrow M_g^{trop}$ that sends a smooth projective curve $X$ over a non-Archimedean field to its dual tropical curve $\Gamma_X$ (i.e. the minimal skeleton of $X^{an}$ decorated by certain vertex weights keeping track of the genus of components in the reduction). Moreover, it is shown that $M_g^{trop}$ may be naturally identified with the non-Archimedean skeleton of $\calM_g$ (defined with respect to the Deligne-Mumford compactification), so that $\trop_g$ is, in particular, a strong deformation retraction. Motivated by this beautiful story for $\calM_{g}$, one might be tempted to ask:

\begin{quotation}
\emph{Which space tropicalizes to tropical Teichm\"uller space?}
\end{quotation}

\subsection*{A non-Archimedean Teichm\"uller space} From now on let $K$ be an algebraically closed non-Archimedean field (not necessarily carrying the trivial absolute value). In this article, inspired by the work of Gerritzen and Herrlich on a non-Archimedean analogue of \emph{Schottky space} \cite{Gerritzen_Siegel, Gerritzen_TeichmuellerSiegel, Herrlich_Teichmueller, Herrlich_extendedTeichmueller}, we use methods from tropical and logarithmic geometry to construct a  \emph{non-Archimedean Teichm\"uller space} $\calTbar_g$. Roughly speaking, a point in $\calTbar_g$ is a stable projective curve $X$ over a non-Archimedean extension $L$ of $K$ together with an equivalence  
\begin{equation*}
\phi\colon \pi_1\big(X^{an}\big)\xlongrightarrow{\sim} F_{b_1}\ ,
\end{equation*} 
where $b_1=b_1(X^{an})$ denotes the first Betti number of $X^{an}$, thought of as a topological space.

\begin{maintheorem}\label{thm_calTbar_g}
The non-Archimedean Teichm\"uller space $\calTbar_{g}$ is an analytic Deligne-Mumford stack that is smooth, separated and without boundary. 
\end{maintheorem}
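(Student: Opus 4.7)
The plan is to construct $\overline{\calT}_g$ as an étale cover of the Berkovich analytification $\overline{\calM}_g^{an}$ and then transfer the four desired properties from the base. Recall that $\overline{\calM}_g$ is a smooth, proper Deligne--Mumford stack, so its analytification $\overline{\calM}_g^{an}$ is automatically a smooth, separated, boundaryless analytic Deligne--Mumford stack.

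First I would define the forgetful morphism
\[
p\colon \overline{\calT}_g \longrightarrow \overline{\calM}_g^{an}, \qquad (X,\phi)\mapsto X,
\]
and show that it is étale. Its fibre over a point $X$ is a torsor under $\Out(F_{b_1(X^{an})})$, namely the set of outer isomorphisms $\pi_1(X^{an})\xrightarrow{\sim} F_{b_1(X^{an})}$. To verify étaleness one works locally on $\overline{\calM}_g^{an}$: in a neighbourhood of $X$, the logarithmic structure on $\overline{\calM}_g$ coming from the boundary divisor controls the combinatorics of the skeleton of the universal family, so the outer isomorphisms in nearby fibres assemble into a coherent local system of $\Out$-torsors.

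Once $p$ is known to be étale, the remaining properties follow essentially formally. Smoothness of $\overline{\calT}_g$ is inherited from smoothness of $\overline{\calM}_g^{an}$ because étale morphisms preserve smoothness. Separatedness is also inherited from the base: the diagonal of $\overline{\calT}_g$ can be identified with the pullback of the closed diagonal of $\overline{\calM}_g^{an}$ along the unramified morphism $p\times p$, so it is closed. Finally, being without boundary is an étale-local property in the source, and $\overline{\calM}_g^{an}$ has no boundary because $\overline{\calM}_g$ is proper.

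The main obstacle is the construction of the étale cover $p$ itself, specifically extending Teichmüller markings coherently across strata on which the first Betti number of the skeleton jumps. Near such a jump, the skeleton of a nearby smoother fibre is obtained from that of the more degenerate central fibre by contracting certain edges (exactly as in the combinatorial description of $M_g^{trop}$ recalled above), and one must verify that the induced map on topological fundamental groups is compatible with the chosen markings. This tropical/logarithmic compatibility is the technical heart of the argument; once it is in place, the stated properties of $\overline{\calT}_g$ follow by transfer along the étale cover $p$.
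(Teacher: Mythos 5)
There is a genuine gap, and it is located exactly where you declare the argument to be ``essentially formal'': the deduction of separatedness. You identify the diagonal of $\calTbar_g$ with the pullback of the diagonal of $\calMbar_g^{an}$ along $p\times p$. That identification is false whenever $p$ has non-trivial fibres: the pullback of the diagonal of the base is the whole fibre product $\calTbar_g\times_{\calMbar_g^{an}}\calTbar_g$, which contains pairs of \emph{distinct} markings on the same curve, and the question of separatedness is precisely whether the actual diagonal is closed (proper) inside this larger space. An \'etale map onto a separated target does not force the source to be separated --- the line with doubled origin over the affine line is the standard counterexample --- and in the present situation the failure is not hypothetical: the algebraic avatar $\underline{\calT}_g^{log}$ is strict (hence \'etale-locally an isomorphism) over $\calMbar_g$ and nevertheless \emph{not} separated, because a degenerating smooth curve has one marking class generically and many in the special fibre. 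Separatedness of $\calTbar_g$ is therefore a special feature of the Raynaud generic fibre and needs an actual argument. The paper proves it by showing the diagonal is proper: for a test space $U\rightarrow\calTbar_g\times\calTbar_g$ the fibre product is the space of isomorphisms of the underlying curves that are compatible with the markings on the dual tropical curves; properness of the diagonal of $\calMbar_g^{an}$ makes this finite over affinoids in $U$, whence boundaryless and proper. Your proposal contains no substitute for this step.

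A secondary problem is the phrase ``coherent local system of $\Out$-torsors.'' The fibre of $p$ over $X$ is the set of topological equivalence classes of Teichm\"uller markings, and its cardinality jumps across the strata where $b_1$ of the skeleton jumps; so $p$ is not a covering space and the fibres do not form a local system. The correct local model, which the paper computes, is a non-separated toric picture: over the cone $\sigma_G$ one glues copies of $U_{\sigma_G}$ indexed by marking classes along the loci where edges are uncontracted, exactly as for $\calT_g^{trop}\rightarrow\calM_g^{trop}$ being a strict (but non-representable-by-covering-spaces) surjection. Your transfer of $G$-smoothness and boundarylessness along such a local isomorphism is fine in spirit and matches the paper, and your closing paragraph correctly flags that the coherence of markings across strata is the technical heart (the paper resolves it by defining $\calT_g^{log}=\calM_g^{log}\times_{\calM_g^{trop}}\calT_g^{trop}$ via Artin fans and then applying the Raynaud generic fibre). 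But as written the proposal both misdescribes the local structure of $p$ and, more seriously, gives an invalid proof of separatedness.
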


We write $\calT_g$ for the locus of smooth curves in $\calTbar_g$ and $\calTbar_g^{\textrm{Mum}}$ as well as $\calT_g^{\textrm{Mum}}$ for the locus of stable or respectively smooth Mumford curves in $\calTbar_g$.

\subsection*{Tropicalization}
Denote by $\Tbar_g^{trop}$ the canonical compactification of $T_g^{trop}$ that parametrizes tropical curves $\Gamma$ where we allow the edge lengths to be $\infty$. There is a natural tropicalization map 
\begin{equation*}\trop_g\colon \calTbar_g\longrightarrow \Tbar_g^{trop}
\end{equation*} that sends a pair $(X,\phi)$ as above to the dual tropical curve $\Gamma_X$ of $X$ together with the natural induced marking $\phi\colon \pi_1(\Gamma_X)\rightarrow F_{b_1}$ (using that $\Gamma_X$ is a strong deformation retract of $X^{an}$). We show the following analogue of the main result of \cite{ACP}.

\begin{maintheorem}\label{thm_skel=trop}
The natural tropicalization map $\trop_{g}\colon \calTbar_g\longrightarrow \Tbar_g^{trop}$ has a section that makes $\Tbar_g^{trop}$ into a strong deformation retract of $\calTbar_g$. 
\end{maintheorem}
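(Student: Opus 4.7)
The plan is to adapt the strategy of Abramovich-Caporaso-Payne \cite{ACP} from $\calMbar_g$ to $\calTbar_g$, transferring it via the forgetful morphism. The central geometric input, implicit in the construction underlying Theorem A, is that the forgetful map $F\colon \calTbar_g\longrightarrow \calMbar_g^{an}$ is an \'etale morphism of analytic Deligne-Mumford stacks: its geometric fibre over a stable curve $X$ is the set of outer equivalences $\phi\colon \pi_1(X^{an})\xrightarrow{\sim} F_{b_1(X^{an})}$ modulo the action of $\Aut(X)$, which is discrete.

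Once $F$ is known to be \'etale, the toroidal structure on $\calMbar_g$, whose boundary divisor is the locus of nodal curves, pulls back to a toroidal structure on $\calTbar_g$. Thuillier's canonical strong deformation retraction for such analytic stacks then supplies a retraction $\rho_T\colon \calTbar_g \longrightarrow \Sigma(\calTbar_g)$ onto its non-Archimedean skeleton, compatible with the corresponding retraction $\rho_M\colon \calMbar_g^{an}\to \Sigma(\calMbar_g) = \Mbar_g^{trop}$ of \cite{ACP}. The required section of $\trop_g$ is the tautological inclusion of the skeleton into $\calTbar_g$, and its strong deformation retraction property is inherited from Thuillier's construction.

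It remains to identify $\Sigma(\calTbar_g)$ with $\Tbar_g^{trop}$ in a manner compatible with $\trop_g$. The strata of $\Sigma(\calMbar_g)$ are indexed by stable weighted dual graphs $G$, each being a cone $\R_{\geq 0}^{E(G)}$ modulo $\Aut(G)$. The \'etale cover $F$ refines this stratification: strata of $\Sigma(\calTbar_g)$ are indexed by pairs $(G,\phi)$ consisting of a stable weighted graph $G$ together with an outer equivalence $\phi\colon \pi_1(G)\xrightarrow{\sim} F_{b_1(G)}$, with the quotient now taken only by the subgroup of $\Aut(G)$ preserving $\phi$. This recovers precisely the cone-complex description of $\Tbar_g^{trop}$ due to Chan-Melo-Viviani. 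On closed points, Thuillier's retraction sends $X^{an}$ to its minimal Berkovich skeleton, which carries the dual tropical curve structure $\Gamma_X$; the marking $\phi$ is transported to the induced marking of $\pi_1(\Gamma_X)$ via the homotopy equivalence $X^{an}\simeq \Gamma_X$, so that $\rho_T$ coincides with $\trop_g$ under the identification $\Sigma(\calTbar_g) \cong \Tbar_g^{trop}$.

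The principal obstacle will be the careful bookkeeping in this last identification: one must verify that the cone complex obtained by \'etale descent of $\Sigma(\calMbar_g) = \Mbar_g^{trop}$ along $F$ agrees stratum-by-stratum with Chan-Melo-Viviani's simplicial completion of Outer space, and that the Teichm\"uller marking is transported functorially through the identification, in particular at the points where the genus-$g$ tropical curve has positive vertex weights.
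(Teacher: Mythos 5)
Your overall architecture is close to the paper's: the paper also constructs $\calTbar_g$ so that the forgetful map to $\calMbar_g^{an}$ is \'etale-locally an isomorphism (via the strict surjection $\calT_g^{trop}\to\calM_g^{trop}$ and the fibered product $\calT_g^{log}=\calM_g^{log}\times_{\calM_g^{trop}}\calT_g^{trop}$), and the identification of the skeleton strata with pairs $(G,[\phi])$ is exactly the content of Theorem \ref{thm_logtrop=modtrop}. Your observation that the quotient is only by the subgroup of $\Aut(G)$ preserving $\phi$ is correct, and in fact that subgroup is trivial (the action of $\Aut(G)$ on the fundamental groupoid is faithful), which is why $T_g^{trop}$ is an honest cone complex rather than a generalized one.

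The genuine gap is your appeal to ``Thuillier's canonical strong deformation retraction.'' Thuillier's construction (and the skeleton of \cite{ACP} built on it) is only available over a field with the \emph{trivial} absolute value, whereas the theorem is asserted over an arbitrary algebraically closed non-Archimedean field $K$: here $\calTbar_g$ is the Raynaud generic fiber of the base change of $\underline{\calT}_g^{log}$ to the valuation ring $R$, not the analytification of a variety over a trivially valued field. To get the retraction one must instead use the Gubler--Rabinoff--Werner skeleton of a semistable pair over $R$, and two further extensions of that theory are needed which your proposal does not supply: (i) the GRW construction must be generalized from schemes to Deligne--Mumford stacks with a (stack-theoretic) normal crossing divisor and good reduction --- this is Lemma \ref{lemma_skeletonncpairs}, proved by taking a colimit over small \'etale neighborhoods and checking that the local retractions and the homotopies between them glue; and (ii) the model $\underline{\calT}_{g,R}^{log}$ is \emph{not separated} (and not proper) over $R$, so one must verify that the GRW construction still applies to the Raynaud generic fiber of a non-proper model, which the paper addresses explicitly. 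Without these two steps the ``tautological inclusion of the skeleton'' and the strong deformation retraction onto it are not yet constructed objects, so the proof is incomplete as written; once Lemma \ref{lemma_skeletonncpairs} is in place, the rest of your argument goes through essentially as in the paper.
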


In particular, the restriction of $\trop_g$ to the locus $\calT_g$ of smooth curves in $\calTbar_g$ induces a strong deformation retraction onto the (non-extended) tropical Teichm\"uller space $T_g$. 

At this occasion, we point out that we prove Theorem \ref{thm_skel=trop} over any algebraically closed non-Archimedean field, contrary to the main result of \cite{ACP} which is only proved over an algebraically closed field with the \emph{trivial} absolute value. For this purpose we generalize in Section \ref{section_stackyskeleton} below the construction of a Berkovich skeleton relative to a simple normal crossing divisor carried out in \cite{GRW} to normal crossing divisors on smooth Deligne-Mumford stacks with good reduction. We also generalize \cite[Theorem 1.2.1]{ACP} to any algebraically closed non-Archimedean base fields in Theorem \ref{thm_skel=tropdetail}.

Restricting the contraction in Theorem \ref{thm_skel=trop} to the locus of smooth Mumford curves we find the following.

\begin{maincorollary}\label{cor_skel=trop}
The restriction of the tropicalization map to $\calT_g^{\textrm{Mum}}$ makes the Culler-Vogtmann Outer space $\CV_g$ into a strong deformation retract of $\calT_g^{\textrm{Mum}}$. 
\end{maincorollary}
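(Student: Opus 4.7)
The plan is to restrict the strong deformation retraction of Theorem \ref{thm_skel=trop} to the open sublocus of smooth Mumford curves. First I would identify $\calT_g^{\textrm{Mum}}$ with the preimage $\trop_g^{-1}(\CV_g)$: a pair $(X,\phi)\in\calTbar_g$ tropicalizes into Culler--Vogtmann Outer space precisely when the dual metric graph $\Gamma_X$ has all vertex weights equal to zero (equivalently, every component of the stable reduction has geometric genus $0$, i.e.\ $X$ is a Mumford curve) and all edges of finite length (equivalently, $X$ is smooth). Conversely, the dual graph of any smooth Mumford curve of genus $g$ is manifestly a point of $\CV_g$.

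Next, since the section $s\colon \Tbar_g^{trop}\to \calTbar_g$ provided by Theorem \ref{thm_skel=trop} satisfies $\trop_g\circ s=\id$, its restriction to $\CV_g\subseteq \Tbar_g^{trop}$ automatically takes values in $\trop_g^{-1}(\CV_g)=\calT_g^{\textrm{Mum}}$ and thus yields a section of the restricted tropicalization map. It then remains to verify that the homotopy $H\colon[0,1]\times \calTbar_g\to\calTbar_g$ underlying Theorem \ref{thm_skel=trop} preserves the sublocus $\calT_g^{\textrm{Mum}}$, so that its restriction exhibits $\CV_g$ as a strong deformation retract of $\calT_g^{\textrm{Mum}}$.

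This last point is the main obstacle. The homotopy is constructed from the Berkovich retraction of a smooth Deligne--Mumford stack onto the skeleton of its toroidal boundary (as developed in Section \ref{section_stackyskeleton}), and the key property I would need is that this retraction is stratum-preserving with respect to the stratification of $\calTbar_g$ by dual graph type: the combinatorial type of the dual graph (nodes together with component genera) should be constant along each path $t\mapsto H(t,(X,\phi))$. I would verify this from the local toric picture, in which the retraction of a toric variety onto its fan respects the orbit stratification. Given this, the defining conditions of a smooth Mumford curve---smoothness of $X$ and the vanishing of all component genera in the stable reduction---are preserved by $H$, and the corollary follows.
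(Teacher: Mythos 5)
Your proposal is correct and follows essentially the same route as the paper: identify $\calT_g^{\textrm{Mum}}$ with $\trop_g^{-1}(\CV_g)$ and check in the local toric model that the homotopy underlying Lemma \ref{lemma_skeletonncpairs} preserves this locus. The paper verifies the slightly stronger fact that the homotopy preserves the \emph{fibers} of $\rho$ (i.e.\ $\trop_g$ is constant along each homotopy path, since the torus operation fixes each $-\log\vert x^u\vert$), which implies your stratum-preservation; just be aware that for the Mumford condition you need constancy of the reduction type, not merely of the torus-orbit stratification of the generic fiber, so the fiber-preservation statement is the cleaner invariant to check locally.
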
  

Our construction of $\calTbar_g$ uses the tropical construction of $\calT_g^{trop}$ and, in order to lift this to the algebraic world, it uses methods from \emph{logarithmic geometry} in the sense of Fontaine-Kato-Illusie (see \cite{Kato_logstr}) and, in particular, the theory of \emph{Artin fans}, as developed in \cite{AW, ACMW, CCUW, Ulirsch_nonArchArtin}. In fact, we define a logarithmic algebraic stack $\calT_g^{log}$ as the fibered product
\begin{equation*}
\calT_g^{log}=\calM_g^{log}\times_{\calM_g^{trop}}\calT_g^{trop}
\end{equation*}
along the natural tropicalization morphism $\trop_g\colon\calM_g^{log}\rightarrow\calM_g^{trop}$ introduced in \cite{CCUW}. This way we obtain a smooth, universally closed, non-separated Deligne-Mumford stack $\underline{\calT}^{log}_g$ locally of finite type over $\Z$ in which the complement of the locus of smooth curves in $\underline{\calT}^{log}_g$ is a divisor with simple normal crossings (over $\Z$). We then define the non-Archimedean Teichm\"uller space $\calTbar_g$ as the analytic generic of fiber of the base change of $\underline{\calT}^{log}_g$ to the valuation ring $R$ of $K$.

\subsection*{Non-Archimedean Schottky space and its tropicalization}
In \cite{Gerritzen_Siegel, Gerritzen_TeichmuellerSiegel, Herrlich_Teichmueller, Herrlich_extendedTeichmueller} Gerritzen and Herrlich construct a closely related non-Archimedean analogue $S_g$ of \emph{Schottky space} over the locus of Mumford curves in $\calM_g^{an}$ (see \cite{Koebe_UniformisierungIV, Hejhal_Schottky&Teichmueller} for the original complex construction). They crucially use Mumford's non-Archimedean analogue of Schottky uniformization for maximally degenerate curves (as introduced in \cite{Mumford_uniformization}; also see \cite{FresnelvanderPut,Luetkebohmert_book} for further details). 

A point in $S_g$ is a is $\PGL_2(L)$-conjugacy class of a faithful representation $F_g\rightarrow \PGL_2(L)$ with discontinuous image for a non-Archimedean extension. Denote by $\Omega\subseteq \PP^{1,an}_L$ the open set of ordinary points of the induced operation of $F_g$ on $\PP_L^{1,an}$. Then $\Omega/F_g$ is a Mumford curve $X^{an}$ and the induced equivalence $\pi_1(X)\xrightarrow{\sim}F_g$ is a marking in the above sense. So we have a natural morphism $S_g\rightarrow \calTbar_g$ whose image is the locus $\calT_g^{\textrm{Mum}}$ of smooth Mumford curves. 

Denote by $\Sbar_g$ the natural partial compactification of $S_g$ that extends their construction by faithful and discontinuous operations on trees of projective lines (constructed in \cite{Herrlich_extendedTeichmueller} as a rigid-analytic space). Then the above morphism $S_g\rightarrow \calTbar_g$ naturally extends to a morphism $\Sbar_g\rightarrow\calTbar_g$ whose image is the locus $\calTbar_g^{\textrm{Mum}}$ of stable Mumford curves in $\calTbar_g$.

Herrlich \cite{Herrlich_personalcommunication} was already aware that there is a natural "tropicalization map" from $S_g$ to Culler-Vogtmann Outer space $\CV_g$. In this article we expand on this realization and recover his map as the composition $S_g\rightarrow\calT_g^{\textrm{Mum}}\rightarrow \CV_g$. We refer the reader to the upcoming \cite{PoineauTurchetti_SchottkyoverZ} for a detailed examination of the relationship between $S_g$ and $\calT_g^{\textrm{Mum}}$.

\subsection*{The quotient by $\Out(F_g)$} 

Consider the natural forgetful map $\calTbar_g\rightarrow \calMbar_g$ that forgets the marking. There is a natural operation of $\Out(F_g)$ on $\calTbar_g$ with respect to which the forgetful map $\calTbar_g\rightarrow \calMbar_g$ is invariant. The quotient stack $\big[\calMbar_g\big/\Out(F_g)\big]$ is not isomorphic to $\calMbar_g^{an}$, since non-Archimedean curves with non-maximal reduction will always have stabilizer groups that are not present in $\calMbar_g^{an}$. We, however, have the following weaker Theorem \ref{thm_quotient}.

\begin{maintheorem}\label{thm_quotient}
The relative coarse quotient $\calTbar_g\big/_{\calMbar^{an}_g}\Out(F_g)$ over $\calMbar_g^{an}$ is equal to $\calMbar_g^{an}$. 
\end{maintheorem}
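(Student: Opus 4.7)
The plan is to reduce the statement to the corresponding identity in the tropical category, using the fiber product description $\calT_g^{log}=\calM_g^{log}\times_{\calM_g^{trop}}\calT_g^{trop}$.

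The forgetful morphism $\calTbar_g\to\calMbar_g^{an}$ is $\Out(F_g)$-invariant, since the $\Out(F_g)$-action only affects the tropical factor. Because $\calTbar_g$ arises as the analytic generic fiber of the base change of $\underline{\calT}^{log}_g$ to $R$, and the relative coarse moduli formation for the discrete $\Out(F_g)$-action commutes with both flat base change and with analytification, it suffices to prove the logarithmic analogue $\calT_g^{log}/_{\calM_g^{log}}\Out(F_g)=\calM_g^{log}$. Stability of relative coarse moduli under the Cartesian square defining $\calT_g^{log}$ then reduces this further to the purely tropical identity $\calT_g^{trop}/_{\calM_g^{trop}}\Out(F_g)=\calM_g^{trop}$.

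This tropical identity is essentially built into the construction in \cite{CMV}: $\calT_g^{trop}$ parametrizes weighted tropical curves $(\Gamma,w)$ together with a marking $\phi\colon\pi_1(\Gamma)\xrightarrow{\sim}F_{b_1(\Gamma)}$, and $\Out(F_g)$ acts transitively on such markings via any free product decomposition $F_g\cong F_{b_1(\Gamma)}\ast F_{g-b_1(\Gamma)}$: every outer automorphism of $F_{b_1(\Gamma)}$ extends by the identity on the second factor, yielding a surjection $\Out(F_g)\twoheadrightarrow\Out(F_{b_1(\Gamma)})$. The stabilizer of a fixed marking is the kernel of this surjection, which contains $\Out(F_{g-b_1(\Gamma)})$ and is precisely the source of the extra inertia preventing $[\calMbar_g/\Out(F_g)]$ from being $\calMbar_g^{an}$. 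Rigidifying by this kernel---which is exactly what the relative coarse moduli over $\calM_g^{trop}$ accomplishes---recovers $\calM_g^{trop}$.

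The main obstacle is the careful bookkeeping in the two reduction steps: one must verify that relative coarse moduli commutes with the base changes used (analytification, passage to the analytic generic fiber over $R$, and Cartesian pullback along $\trop_g$), and that the $\Out(F_g)$-action induced on $\calTbar_g$ from the tropical fiber product agrees with the natural topological action on markings of $\pi_1(X^{an})$. The former is standard for discrete group actions in these geometric categories; the latter uses Theorem \ref{thm_skel=trop}, since the skeleton $\Gamma_X\hookrightarrow X^{an}$ is a strong deformation retract and so induces an isomorphism $\pi_1(\Gamma_X)\cong\pi_1(X^{an})$ that intertwines the two actions.
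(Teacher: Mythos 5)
Your overall reduction chain is exactly the one the paper follows: Theorem \ref{thm_quotient} is deduced from the logarithmic statement (Theorem \ref{thm_logquot}), which in turn is deduced from the tropical statement (Theorem \ref{thm_tropquot}) via the fiber product $\calT_g^{log}=\calM_g^{log}\times_{\calM_g^{trop}}\calT_g^{trop}$, and the passage to $\calTbar_g$ is handled by observing that $(.)\otimes\Spec R$ and $(.)_\eta$ preserve $2$-colimits. So the architecture is fine. The problem is in your justification of the tropical identity, which is where the actual content lives.

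You describe the markings as equivalences $\phi\colon\pi_1(\Gamma)\xrightarrow{\sim}F_{b_1(\Gamma)}$ and then try to produce an $\Out(F_g)$-action via a free product decomposition $F_g\cong F_{b_1(\Gamma)}\ast F_{g-b_1(\Gamma)}$, claiming that extending outer automorphisms of $F_{b_1(\Gamma)}$ by the identity on the second factor yields a surjection $\Out(F_g)\twoheadrightarrow\Out(F_{b_1(\Gamma)})$. That construction goes in the wrong direction: extension by the identity gives a homomorphism \emph{from} $\Out(F_{b_1})$ \emph{into} $\Out(F_g)$, not a retraction of $\Out(F_g)$ onto $\Out(F_{b_1})$, and no such natural surjection exists in general (only the stabilizer of the conjugacy class of the free factor maps to $\Out(F_{b_1})$). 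More importantly, this whole detour is unnecessary and misreads the moduli problem: in $\calT_g^{trop}$ a marking is a topological equivalence class of isomorphisms $\pi_1\big(\G(\Gamma)\big)\xrightarrow{\sim}F_g$, where $\G(\Gamma)$ is the graph of groups with vertex groups $F_{h(v)}$, so that the target is $F_g$ for \emph{every} $\Gamma$ and $\Out(F_g)$ acts directly by post-composition; transitivity on the markings of a fixed $\Gamma$ is then immediate, since $\phi_2\circ\phi_1^{-1}$ is an (outer) automorphism of $F_g$. The paper's proof of Theorem \ref{thm_tropquot} runs this way: the map $\big[\calT_g^{trop}\big/\Out(F_g)\big]\rightarrow\calM_g^{trop}$ is essentially surjective (every $\Gamma$ admits a marking), full (weighted edge contractions induce equivalences of fundamental groupoids by Proposition \ref{prop_weightededgecontractions}, so morphisms downstairs lift after twisting by a suitable element of $\Out(F_g)$), and becomes faithful after passing to the relative coarse moduli space. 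If you replace your free-product argument by this one, the rest of your proposal goes through as in the paper.
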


Here the \emph{relative coarse quotient } $\calTbar_g\big/_{\calMbar^{an}_g}\Out(F_g)$ is the \emph{relative coarse moduli space} of $\big[\calTbar_{g}\big/\Out(F_g)\big]$ over $\calMbar_g^{an}$, i.e. the morphism $\big[\calTbar_{g}\big/\Out(F_g)\big]\rightarrow \calMbar_g^{an}$ is initial among all factorizations 
\begin{equation*}
\big[\calTbar_{g}\big/\Out(F_g)\big]\longrightarrow \calX\longrightarrow \calMbar_g^{an}
\end{equation*}
such that $\calX\rightarrow \calMbar_g^{an}$ is representable (see \cite[Theorem 3.1]{AOV} for the concept of relative coarse moduli spaces in the algebraic category). In our case, this means that $\calTbar_g\big/_{\calMbar^{an}_g}\Out(F_g)$ up to natural equivalence is the only analytic stack that gives rise to such a factorization.

For a stable vertex-weighted graph $G$ denote by $\calTbar_G$ and $\calUbar_G$ the affinoid domains of $\calTbar_g$ and $\calMbar_g^{an}$ respectively that parametrize stable curves of genus $g$ for whom the underlying graph of the dual tropical curve is equal to $G$. Then Theorem \ref{thm_quotient} means that we have an equivalence
\begin{equation*}
\calUbar_G\simeq\big[\calTbar_G\big/\Out(F_{b_1(G)})\big] 
\end{equation*}
for every stable vertex-weighted graph $G$ with Betti-number $b_1(G)=h^1(G)$. 

As a consequence of Theorem \ref{thm_quotient}, we finally obtain:

\begin{maincorollary}
The coarse moduli space of the quotient $\big[\calTbar_{g}\big/\Out(F_g)\big]$ is naturally isomorphic to the coarse moduli space $\Mbar_g^{an}$ of $\calMbar_g^{an}$.
\end{maincorollary}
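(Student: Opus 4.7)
The plan is a direct deduction from Theorem~\ref{thm_quotient}. That theorem is equivalent to the statement that the natural morphism
\[
\pi\colon \bigl[\calTbar_g\big/\Out(F_g)\bigr]\longrightarrow \calMbar_g^{an}
\]
is representable (the relative coarse quotient over $\calMbar_g^{an}$ equalling $\calMbar_g^{an}$ itself forces the universal factorization of $\pi$ to be trivial, so $\pi$ is itself representable). Composing with the coarse moduli morphism $q\colon \calMbar_g^{an}\to \Mbar_g^{an}$ yields a canonical morphism $q\circ\pi\colon [\calTbar_g/\Out(F_g)]\to \Mbar_g^{an}$, and the goal is to show that it satisfies the universal property of a coarse moduli space.

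First I would check that $q\circ\pi$ is bijective on isomorphism classes of geometric points: an iso class of $(X,\phi)\in [\calTbar_g/\Out(F_g)](\bar K)$ depends only on the iso class of $X$, because $\Out(F_g)$ acts transitively on the markings $\pi_1(X^{an})\xrightarrow{\sim}F_{b_1(X)}$ through the natural quotient $F_g\twoheadrightarrow F_{b_1(X)}$; this matches $\Mbar_g^{an}(\bar K)$. Next, to establish the universal property, I would work \'etale-locally on $\calMbar_g^{an}$: a local presentation $[\tilde U/H]$ of $\calMbar_g^{an}$ lifts by representability of $\pi$ to a presentation $[\tilde V/H]$ of $[\calTbar_g/\Out(F_g)]$ with $\tilde V\to\tilde U$ an $H$-equivariant morphism of analytic spaces, and the bijection of iso classes forces $\tilde V/H\to\tilde U/H$ to be an isomorphism of local coarse moduli. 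Gluing these identifications produces $\Mbar_g^{an}$ as the coarse moduli of $[\calTbar_g/\Out(F_g)]$.

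The main obstacle will be the \'etale-local analysis of $\pi$: one must verify that the bijection $\tilde V/H\to\tilde U/H$ actually upgrades to an analytic isomorphism, which relies on the specific geometry of $\pi$ as the quotient by the transitive $\Out(F_g)$-action on the fibers of $\calTbar_g\to\calMbar_g^{an}$ above each stable curve. Once this local identification is in place, uniqueness of the factorization is automatic from the fact that $[\calTbar_g/\Out(F_g)]\to \Mbar_g^{an}$ is surjective on geometric points.
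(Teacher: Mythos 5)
There is a genuine error at the very first step. You claim that Theorem~\ref{thm_quotient} is ``equivalent to the statement that $\pi\colon\big[\calTbar_g\big/\Out(F_g)\big]\rightarrow\calMbar_g^{an}$ is representable.'' This is backwards: if $\pi$ were representable, the relative coarse moduli space of $\pi$ would be the source $\big[\calTbar_g\big/\Out(F_g)\big]$ itself, not the target. The content of Theorem~\ref{thm_quotient} is precisely that $\pi$ is \emph{not} representable and that killing its inertia yields $\calMbar_g^{an}$ on the nose. Indeed, the paper points out that curves with non-maximal reduction carry extra stabilizers: over a point of good reduction the fiber of $\pi$ is $B\Out(F_g)$ (the unique marking is fixed by all of $\Out(F_g)$), so the inertia of $\big[\calTbar_g\big/\Out(F_g)\big]$ over $\calMbar_g^{an}$ is not even finite. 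Consequently your \'etale-local step --- lifting a presentation $[\tilde U/H]$ of $\calMbar_g^{an}$ to a presentation $[\tilde V/H]$ with $\tilde V$ an analytic \emph{space} --- cannot be carried out, since the pullback $\tilde U\times_{\calMbar_g^{an}}\big[\calTbar_g\big/\Out(F_g)\big]$ is a stack with infinite stabilizers, and the rest of your argument rests on this.

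The intended deduction is shorter and avoids representability of $\pi$ altogether. The coarse moduli space of a stack is its relative coarse moduli space over a point; for any morphism to an analytic space $Z$, the composite $\big[\calTbar_g\big/\Out(F_g)\big]\rightarrow Z\times\calMbar_g^{an}\rightarrow\calMbar_g^{an}$ has representable second arrow, so by the universal property in Theorem~\ref{thm_quotient} every map to $Z$ factors uniquely through $\calMbar_g^{an}$. Hence the coarse moduli space of $\big[\calTbar_g\big/\Out(F_g)\big]$ coincides with that of $\calMbar_g^{an}$, and the identification of the latter with the analytification $\Mbar_g^{an}$ of the algebraic coarse space is exactly the input from Conrad--Temkin that the paper cites --- a point your proposal silently assumes when it writes down $q\colon\calMbar_g^{an}\rightarrow\Mbar_g^{an}$. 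Your observation that $\Out(F_g)$ acts transitively on topological equivalence classes of markings of a fixed stable curve is correct and gives bijectivity on geometric points, but by itself it does not yield the universal property.
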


Here the compatibility of forming coarse moduli space with taking analytification follows from \cite[Theorem 1.2.1 and 1.2.2]{ConradTemkin_algspaces} on the analytification of algebraic spaces and \'etale equivalence relations. 


\subsection*{Complements and remarks}
\begin{myitem}{}
Let $g,n\geq 0$ such that $2g-2+n>0$. Our construction admits an immediate generalization to the case curves with marked points (as introduced in \cite{Knudsen_projectivityII}). In fact, one may construct a non-Archimedean Teichm\"uller space $\calTbar_{g,n}$ parametrizing smooth projective curves over a non-Archimedean extension $L$ together with $n$ distinct marked points $p_1,\ldots, p_n\in X(L)$ and a fixed equivalence $\pi_1(X^{an})\simeq F_{b_1(X)}$ as the fiber product 
\begin{equation*}
\calTbar_{g,n}=\calTbar_g\times_{\calMbar_g^{an}}\calMbar_{g,n}^{an}\ .
\end{equation*} 
Analogues of the above results for $\calTbar_{g,n}$ immediately follow from this description and the natural forgetful map $\calTbar_{g,{n+1}}\rightarrow \calTbar_{g,n}$ functions as a universal curve.
\end{myitem}

\begin{myitem}{} In \cite{Mochizuki_padicTeichmueller} Mochizuki develops another approach to the non-Archimedean uniformization of $\calM_g$ that goes by the name \emph{$p$-adic Teichm\"uller theory} (also see \cite{Mochizuki_intropadicTeichmueller}). As explained \cite[Section 1.1]{Mochizuki_intropadicTeichmueller} this is based on a $p$-adic analogue of Fuchsian uniformization via so-called \emph{indigenous bundles}, while our approach is essentially based on Mumford's non-Archimedean analogue of Schottky uniformization (as introduced in \cite{Mumford_uniformization}). Mochizuki, in particular, argues that Mumford's uniformization "does not strongly depend on the prime $p$", since e.g. Frobenius automorphisms play no significant role. The fact that our space $\underline{\calT}^{log}_g$ is actually defined over $\Z$ is another incarnation of this heuristic. 
\end{myitem}

\begin{myitem}{}
In this article we only use the topological fundamental group of a Berkovich analytic curve $X^{an}$. So, for example, for curves with good reduction our construction introduces nothing new. The author believes there should be a "better" analogue of Teichm\"uller space over the $p$-adic numbers that uses a different analogue of the fundamental group of $X^{an}$, e.g. the \emph{tempered fundamental group} of Andr\'e \cite{Andre_periods}. In \cite[Chapter III]{Scholze_torsion} Scholze constructs a $p$-adic version of \emph{Siegel upper half plane} in the framework of perfectoid spaces. A promising direction of future research could be to use his methods to try to construct a $p$-adic analogue of Teichm\"uller space. We refer the reader to \cite{Reinecke_infinitelevel} for further indications towards an abelian version of such a construction. 
\end{myitem}

\begin{myitem}{} In an upcoming project \cite{PoineauTurchetti_SchottkyoverZ} Poineau and Turchetti generalize Gerritzen-Herrlich Schottky space $S_g$ to a hybrid (Archimedean and non-Archimedean) analytic space over $\Spec \Z$. Its fiber over a non-Archimedean place of $\Z$ is exactly the non-Archimedean Gerritzen-Herrlich Schottky space $S_g$ and over the Archimedean place it is the complex-analytic Schottky space $S_{g,\C}$ (as in \cite{Koebe_UniformisierungIV, Hejhal_Schottky&Teichmueller}). An interesting trajectory for future research would be to develop a theory of hybrid analytic stacks in order to study the hybrid analytification of $\underline{\calT}^{log}_g$, to compare it with the Poineau-Turchetti construction in \cite{PoineauTurchetti_SchottkyoverZ}, and to study its tropicalization from both an Archimedean and a non-Archimedean perspective. 
\end{myitem}

\begin{myitem}{}
In their project \cite{PoineauTurchetti_SchottkyoverZ}, Poineau and Turchetti, in particular, construct a uniformization of the universal Mumford curve. In our framework, we can recover this as follows: Denote by $\Omega_g^{trop}$ the tropical moduli stack of tuples $\big((\widetilde{\Gamma}, l), f\colon\widetilde{\Gamma}\rightarrow\Gamma\big)$ consisting of a stable tropical curve $(\widetilde{\Gamma},l)$ with one marked leg together with a length-preserving morphism $f\colon \widetilde{\Gamma}^{st}\rightarrow \Gamma$ from the stabilization $\widetilde{\Gamma}^{st}$ of $\widetilde{\Gamma}$ (without the marked leg) to a stable tropical curve $\Gamma$ of genus $g$ that is a universal cover of the underlying weighted graphs. There is a natural morphism $\Omega_g^{trop}\rightarrow \calM_{g,1}^{trop}$ that is given by sending $\big((\widetilde{\Gamma}, l), f\big)$ to $\big(\Gamma,f(l)\big)$ and we may again build its logarithmic analogue as the fibered product
\begin{equation*}
\Omega_g^{log}=\calM^{log}_{g,1}\times_{\calM_{g,1}^{trop}}\Omega_g^{trop} \ . 
\end{equation*}
If we apply the Raynaud generic fiber to $\underline{\Omega}_g^{log}$, we find a non-Archimedean analytic stack $\Omega_g$ together with morphisms
\begin{equation}\label{eq_uniformization}\begin{tikzcd}
\overline{\Omega}_{g}\arrow[rr]\arrow[rd]&&\calMbar_{g,1}^{an}\arrow[ld]\\
& \calMbar_g^{an}
\end{tikzcd}\end{equation}
that functions as a non-Archimedean uniformization of the whole universal curve $\calMbar_{g,1}^{an}\rightarrow\calMbar_g^{an}$. So there is a natural operation of $F_g$ on $\overline{\Omega}_g$ such that the relative coarse quotient $\overline{\Omega}_g\big/_{\calMbar_{g,1}^{an}}F_g$ is isomorphic to $\calMbar_{g,1}^{an}$ and the restriction of \eqref{eq_uniformization} to a smooth Mumford curve $X$ in $\calM_g^{an}$ is exactly the presentation $X^{an}\simeq \Omega/ F_g$ of $X^{an}$ as a quotient of an open subset of $\PP^{1,an}$ by a Schottky group (as in \cite{Mumford_uniformization}).

In \cite{Ichikawa_Teichmuellermodularforms} Ichikawa constructs the universal deformation of a Mumford curve via Schottky groups in order to study the extension problem for Teichm\"uller modular forms. His construction of a universal deformation may be recovered by considering the formal fiber of $\overline{\Omega}_g$ over a Mumford curve in $\calMbar_g$. 
\end{myitem}

\begin{myitem}{}\label{item_extendedTeichmueller} Of course it is natural to wonder whether there is also a connection between the classical complex analytic Teichm\"uller space $\calT_{g,\C}$ and its tropical analogue $T_g^{trop}$. In \cite{GerritzenHerrlich_extendedSchottky}, Gerritzen and Herrlich construct a smooth compactification $\calSbar_{g,\C}$ of Archimedean Schottky space, whose boundary combinatorics around the strata of maximally degenerate Riemann surfaces captures exactly the combinatorics of Culler-Vogtmann Outer space $\CV_g$. We expect that a careful recasting of the extended Schottky spaces $\calSbar_{g,\C}$ in terms of logarithmic geometry would allow us to construct a tropicalization morphism $\calS_{g,\C}^{log}\rightarrow \calT_g^{trop}$ in the category of logarithmic $\C$-analytic stacks. 

In \cite{Herrlich_extendedTeichmuellerC} Herrlich uses this compactification (and Abikoff's bordification of Teichm\"uller space \cite{Abikoff_degRS}) to construct a partial compactification of Teichm\"uller space $\calTbar_{g,\C}$ as a complex ringed space that admits an operation of the mapping class group so that the coarse moduli space of $\big[\calTbar_{g,\C}/\MCG_g\big]$ is isomorphic to the coarse moduli space $\Mbar_{g,\mathbb{C}}$ over $\C$. For every symplectic homomorphism $\psi\colon \Gamma_g\rightarrow F_g$ there is an open subset $\calTbar_g(\psi)\subseteq \calTbar_{g,\C}$ (containing $\calT_{g,\C}$ as an open and dense subset) that admits a morphism $\calTbar_g(\psi)\rightarrow\calSbar_{g,\C}$ (given by composing the Teichm\"uller marking with $\psi$). Composing such a morphism with the (conjectural) tropicalization morphism from above, would provide us with a procedure to tropicalize Teichm\"uller space $\calT_{g,\C}\subseteq \calT_g(\psi)$. 


\begin{myitem}{}
Our article is by no means the first instance where moduli functors combine both algebraic/analytic and tropical data. In particular, we refer the reader to the following works:
\begin{itemize}
\item to \cite{Yue_logCYI, Yue_logCYII} for a non-Archimedean counting of holomorphic cylinders on Calabi-Yau surfaces, to \cite{RSPWI, RSPWII} for a logarithmic/tropical reinterpretation of the Vakil-Zinger blow of moduli spaces of elliptic stable maps on toric varieties, and to \cite{Ranganathan_logGWexpansions} and \cite{Ranganathan_productformula} for an approach to a degeneration formula \cite{ACGS} and a product formula \cite{Herr_productformula} in logarithmic Gromov-Witten theory;
\item to \cite{MarcusWise, MolchoWise_logPic}, to \cite{KassPagani, Holmes, HolmesKassPagani}, as well as to \cite{MMUVW} for an approach towards constructing a compactification of the universal Jacobian and a resolution of the universal Abel-Jacobi map; 
\item to \cite{BCGGM3} for a construction of a compactification of a strata of abelian differentials using combinatorial data which may be translated into tropical language expanding on \cite{MUW}; and
\item to \cite{KKN_logAVI} for a modular interpretation of toroidal compactifications of the moduli space $\calA_g$ of principally polarized complex abelian varieties.
\end{itemize}
\end{myitem}
\end{myitem}

\subsection*{Acknowledgements} The idea for this project was born during the Summer School "Around Moduli Spaces" that took place at Saarland University in September 2019, where Frank Herrlich gave a minicourse on "Schottky groups and moduli spaces". We thank him for his crystal clear lectures and answering several of our questions; we also thank the organizers Gabriele Weitze-Schmidth\"usen, Christian Steinhart, and Andrea Thevis for creating this opportunity. Thanks are also due Martin M\"oller, Filippo Viviani and, in particular, to Annette Werner for many useful discussions; she was part of this project in the beginning, but decided to not be a coauthor. The author thanks Daniele Turchetti and J{\'e}r{\^o}me Poineau for our communication concerning their project \cite{PoineauTurchetti_SchottkyoverZ} and for pointing out a crucial difference between Gerritzen-Herrlich's Schottky space and $\calTbar_g$, as well as Hannah Markwig and Christian Steinhart for keeping him in the loop on an ongoing project on how to tropicalize Archimedean Teichm\"uller space. Finally, we acknowledge support from the LOEWE-Schwerpunkt ``Uniformisierte Strukturen in Arithmetik und Geometrie''.


\section{Tropical moduli stacks}

In this section we introduce the basic terminology that we need to study tropical moduli spaces and introduce the moduli space of tropical curves. Our presentation is distilled from \cite{ACP}, \cite{CCUW}, and \cite{Ulirsch_nonArchArtin}.

\subsection{Cones and cone complexes} An \emph{(abstract) rational polyhedral cone} is a tuple $(\sigma,M)$ consisting of a topological space $\sigma$ together with a finitely generated free subgroup $M$ of the group of continuous real-valued functions on $\sigma$ such that the evaluation map $\sigma\rightarrow \Hom(M,\R)$ induces a homeomorphism between $\sigma$ and a strictly convex rational polyhedral cone in $N_\R:=\Hom(M,\R)$ (i.e. a finite intersection of rational half-spaces). A morphism of rational polyhedral cones $(\sigma,M)\rightarrow (\sigma',M')$ is a continuous map $\sigma\rightarrow \sigma'$ that pulls back $M'$ to $M$. We usually drop the reference to $M$ from our notation. 

The \emph{dual monoid} of a rational polyhedral cone $\sigma$ is the submonoid $S_\sigma$ of those functions in $M$ that are non-negative on $\sigma$. We may recover $\sigma$ from $S_\sigma$ via the identification $\sigma=\Hom(S_\sigma,\R_{\geq 0})$. In fact, the association $(\sigma,M)\mapsto S_\sigma$ defines an equivalence of between the category $\mathbf{RPC}$ of rational polyhedral cones and the category of finitely generated and integral (i.e. fine), saturated, and sharp monoids. A \emph{face} $\tau$ of $\sigma$ is a subset along which a function $u\in S_\sigma$ vanishes. It naturally carries the structure of a rational polyhedral cone and the dual monoid of $\tau$ is given by the quotient $S_u/S_u^\ast$, where $S_u$ denotes the localization 
\begin{equation*}
S_\sigma=\big\{s-ku\big\vert s\in S_\sigma \textrm{ and } k\in\N\big\}
\end{equation*}
and $S_u^\ast$ is the submonoid of units in $S_u$. A \emph{face morphism} is a morphism $\tau\rightarrow\sigma$ that induces an isomorphism between $\tau$ and a face of $\sigma$. Notice here that, in particular, all automorphisms of a rational polyhedral cone are face morphisms. We say that a face morphism is \emph{proper} if it is not an isomorphism. 

\begin{definition}
A \emph{(rational polyhedral) cone complex} is $\Sigma$ is a topological space $\vert\Sigma\vert$ together with a collection of closed subsets $\sigma_i$ (with $i\in I$) that cover $\vert\Sigma\vert$ and carry the structure of a rational polyhedral cone subject to the following axioms:
\begin{enumerate}
\item The intersection $\sigma_i\cap \sigma_j$ is a (necessarily finite) union of faces of each $\sigma_i$ and $\sigma_j$.
\item For every face $\tau$ of $\sigma_i$ there is $j\in I$ such that $\sigma_j=\tau$.
\item A subset $A\subseteq \vert\Sigma\vert$ is closed if and only if $A\cap \sigma_i$ is closed for all $i\in I$. 
\end{enumerate}
\end{definition}

In other words, a cone complex $\Sigma$ is a colimit (in the category of topological spaces) over a poset of cones connected by face morphisms (see \cite[Section 2.1]{CCUW} for details). A morphism $\Sigma\rightarrow\Sigma'$ of cone complexes is a continuous map $\vert\Sigma\vert\rightarrow\vert\Sigma'\vert$ such that for every cone $\sigma_i\subseteq \Sigma$ there is a cone $\sigma_j'\subseteq\Sigma'$ such that $f$ factors through a morphism $\sigma_i\rightarrow\sigma_j'$ in $\mathbf{RPC}$. We denote the category of rational polyhedral cone complexes by $\mathbf{RPCC}$. 

A morphism $f\colon\Sigma\rightarrow \Sigma'$ is said to be \emph{strict}, if the induced map $\sigma_i\rightarrow \sigma_j'$ is a face morphism. Denote the class of strict morphism by $\mathbb{P}_{strict}$. By \cite{CCUW}, strict morphisms define a subcanonical Grothendieck topology $\tau_{strict}$ on $\mathbf{RPCC}$, and the triple $\big(\mathbf{RPCC},\tau_{strict}, \mathbb{P}_{strict}\big)$ defines a geometric context in the sense of \cite[Section 1]{CCUW}. 

\begin{remark}
The category $\mathbf{RPCC}$ is naturally equivalent to the category of (fine and saturated) Kato fans, an incarnation of the geometry over the field with one element introduced in \cite{Kato_toricsing} (see \cite[Prop. 3.7]{Ulirsch_functroplogsch}). Strict morphisms hereby correspond exactly to local isomorphism of Kato fans. 
\end{remark}


\subsection{Cone stacks and combinatorial cone stacks}

Let $\Sigma$ be a cone complex. We identify $\Sigma$ with its associated functor of points 
\begin{equation*}\begin{split}
h_\Sigma\colon \mathbf{RPCC}&\longrightarrow \mathbf{Sets}\\
\Sigma'&\longmapsto \Hom(\Sigma',\Sigma)
\end{split}\end{equation*}
so that, by Yoneda's Lemma, we can think of $\Sigma$ as both a presheaf and as a category fibered in groupoids $\mathbf{RPCC}/\Sigma$ over $\mathbf{RPCC}$. We, in particular, say that a category fibered in groupoids over $\mathbf{RPCC}$ is \emph{representable} by $\Sigma$ if it is equivalent to $\mathbf{RPCC}/\Sigma$.

\begin{definition}
A \emph{(rational polyhedral) cone stack} is a category $\calC$ fibered in groupoids over $\mathbf{RPCC}$ that is a stack with respect to the strict topology $\tau_{strict}$ that fulfils the following two axioms:
\begin{enumerate}[(i)]
\item the diagonal $\Delta\colon \calC\longrightarrow \calC\times\calC$ is representable by cone complexes; and 
\item there is a cone complex $\Sigma$ and a (necessarily representable) morphism $\Sigma\rightarrow \calC$ that is strict and surjective. 
\end{enumerate}
\end{definition}

The map $\Sigma\rightarrow \calC$ is also called a chart of $\calC$. Cone stacks naturally form a $2$-category. Its morphisms are morphism of categories over $\mathbf{RPCC}$. The usual techniques for working with stacks apply to this situation as well. In particular, given a strict surjective groupoid object $R\rightrightarrows U$ in $\mathbf{RPCC}$ the quotient $\big[U\big/R\big]$ is cone stack, and, conversely, given a chart $U\rightarrow \calC$ of a cone stack $\calC$, the fiber product $U\times_\calC U$ is representable by a cone complex $R$ so that $R\rightrightarrows U$ defines a strict and surjective groupoid object in $\mathbf{RPCC}$ and we have a natural equivalence $\big[U/R\big]\simeq \calC$. 


In \cite[Section 2.2]{CCUW} the authors have introduced a combinatorial characterization of cone stacks. 

\begin{definition}
A \emph{combinatorial cone stack} is a category fibered in groupoids over $\mathbf{RPC}^f$, the category of rational polyhedral cones with only face morphisms. 
\end{definition}

By \cite{CCUW}, there is a natural equivalence between the $2$-category of cone stacks and the $2$-category of combinatorial cone stacks. Given a cone stack $\calC$ an object in the associated combinatorial cone stack is a strict morphism $\sigma\rightarrow \calC$ from a rational polyhedral cone $\sigma$ into $\calC$ and a morphism is a commuting diagram
\begin{center}\begin{tikzcd}
\sigma \arrow[rr]\arrow[rd] & & \sigma'\arrow[ld]\\
& \calC & 
\end{tikzcd}\end{center} 
which is automatically a face morphism. Conversely, given a combinatorial cone stack $\calC^{comb}$, the associated cone stack $\calC$ is the unique stack over $(\mathbf{RPCC},\tau_{strict})$ whose fiber over a cone $\sigma$ is the groupoid $\HOM(\sigma,\calC^{comb})$. 


\subsection{Coarse moduli spaces and generalized cone complexes}\label{section_coarsemodulispaces}
In the following we write $\FAN$, when we think of $\mathbf{RPC}^{f}$ as a category fibered in groupoids over itself. So for every cone stack $\calC$, there is a tautological morphism $\calC^{comb}\rightarrow \FAN$. 

\begin{definition}
A cone stack $\calC$ is said to have \emph{faithful monodromy}, if the tautological morphism $\calC^{comb}\rightarrow \FAN$ is representable. 
\end{definition}

In other words, this means that morphisms in $\calC^{comb}$ are all actual face morphism. Alternatively, one may also think of cone stacks with faithful monodromy as (relative) sheaves over the category $\FAN$. 

\begin{proposition}\label{prop_coarsemodulispace}
Let $\calC$ be a cone stack. Then there is a cone stack $C$ with faithful monodromy together with a strict morphism $\calC\rightarrow C$ that is initial among all strict morphisms from $\calC$ into cone stacks with faithful monodromy. 
\end{proposition}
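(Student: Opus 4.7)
The plan is to work at the level of combinatorial cone stacks via the equivalence recorded above, and to construct $C^{comb}$ by rigidifying the hom-groupoids of $\calC^{comb}$ so as to kill those automorphisms that act trivially on the underlying cones. Concretely, I define a category $C^{comb}$ fibered in groupoids over $\mathbf{RPC}^f$ with the same objects as $\calC^{comb}$, and with hom-sets given by the image under the tautological functor:
\begin{equation*}
\Hom_{C^{comb}}(\xi,\eta) := \operatorname{image}\bigl(\Hom_{\calC^{comb}}(\xi,\eta)\longrightarrow \Hom_{\FAN}(p(\xi),p(\eta))\bigr),
\end{equation*}
where $p\colon\calC^{comb}\to\FAN$ is the tautological morphism. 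So two parallel morphisms in $\calC^{comb}$ become identified in $C^{comb}$ precisely when they induce the same face morphism of cones. A short check shows that $C^{comb}$ is a CFG (images are stable under composition, Cartesian lifts persist, and $\Aut_{\calC^{comb}}(\xi)\twoheadrightarrow \Aut_{C^{comb}}(\xi)$ remains a group quotient), and by construction $C^{comb}\to\FAN$ is faithful and hence representable, so $C^{comb}$ has faithful monodromy.

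The next step is to promote $C^{comb}$ to an honest cone stack. Starting from a chart $\Sigma\to\calC$ with associated strict surjective groupoid $R = \Sigma\times_\calC\Sigma\rightrightarrows\Sigma$, I would show that the composition $\Sigma\to\calC\to C$ is still surjective and strict, and that the new groupoid $R' := \Sigma\times_C\Sigma$ arises as the image of $R\to \Sigma\times\Sigma$. Invoking the Kato-fan incarnation of $\mathbf{RPCC}$, where strict morphisms correspond to local isomorphisms, one sees that the image of a local-isomorphism groupoid inside the product carries a natural cone complex structure. This makes $R'\rightrightarrows\Sigma$ a strict surjective groupoid of cone complexes, whose quotient is the cone stack $C \simeq [\Sigma/R']$, with representable diagonal given by $R'$.

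The morphism $\calC\to C$ is the identity on objects and the canonical surjection on hom-sets, so it is automatically strict. For universality, suppose $F\colon \calC\to\calD$ is a strict morphism to a cone stack with faithful monodromy. Faithfulness of $\calD^{comb}\to\FAN$ forces the induced map $\Hom_{\calC^{comb}}(\xi,\eta)\to\Hom_{\calD^{comb}}(F(\xi),F(\eta))$ to be constant along the fibers of the map to $\Hom_{\FAN}(p(\xi),p(\eta))$. This map therefore factors uniquely through its image, i.e.\ through $\Hom_{C^{comb}}(\xi,\eta)$, producing the required unique factorization $\calC\to C\to\calD$.

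The main obstacle I anticipate is not the universal property, which is formal, but rather the verification that the image groupoid $R'$ inherits a genuine rational polyhedral cone complex structure: a priori, the image of a morphism in $\mathbf{RPCC}$ is only a topological quotient, and one must check that the pieces fit together into a coherent complex in the sense of the axioms above. This is exactly the point at which the dictionary with Kato fans becomes indispensable, since on the Kato-fan side $R\to R'$ is a local isomorphism and hence $R'$ is automatically of the correct combinatorial type.
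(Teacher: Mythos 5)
Your core construction is exactly the paper's: $C^{comb}$ has the same objects as $\calC^{comb}$, with hom-sets replaced by their images in $\Hom_{\FAN}(p(\xi),p(\eta))$, and the universal property then follows formally from faithfulness of $\calD^{comb}\rightarrow\FAN$, just as you argue. The paper phrases this as a special case of \cite[Proposition 3.1.1]{ACMW} via the Artin fan dictionary, but the explicit combinatorial description it gives is word for word yours.

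The one place where your argument goes astray is the passage from $C^{comb}$ back to an honest cone stack. You propose to present $C$ by the groupoid $R'=\Sigma\times_C\Sigma$ and identify $R'$ with the image of $R=\Sigma\times_\calC\Sigma$ in $\Sigma\times\Sigma$. These are not the same in general: an object of $\Sigma\times_C\Sigma$ over $\tau$ is a pair of maps $a,b\colon\tau\rightarrow\Sigma$ \emph{together with} a $2$-isomorphism between their images in $C$, and since faithful monodromy only forces automorphism groups in $C$ to inject into automorphisms of the underlying cones (not to vanish), distinct $2$-isomorphisms can lie over the same point of $\Sigma\times\Sigma$. For $\calC=[\sigma/\Gamma]$ with $\Gamma$ acting faithfully, $\calC$ already has faithful monodromy, so $C=\calC$ and $R'=\sigma\times\Gamma$; the image of $R$ in $\sigma\times\sigma$ collapses the stabilizers at fixed points and presents a strictly smaller object. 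Taking the image would therefore compute the wrong coarse space (roughly, the initial object among stacks fibered in setoids rather than among cone stacks with faithful monodromy). Fortunately this entire step is unnecessary: once you have verified that $C^{comb}$ is a category fibered in groupoids over $\mathbf{RPC}^{f}$, the equivalence of $2$-categories between combinatorial cone stacks and cone stacks from \cite{CCUW} already produces the cone stack $C$ with its representable diagonal and strict chart, which is how the paper closes the argument.
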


In other words $\calC\rightarrow C\rightarrow \FAN$ is the initial factorization of the tautological morphisms $\calC\rightarrow \FAN$ such that $C\rightarrow \FAN$ is representable. So the $\calC\rightarrow C$ is the relative coarse moduli space of $\calC$ over $\FAN$. In a way, the morphism $\calC\rightarrow C$ plays the role of the morphism of an algebraic stack to its coarse moduli space. Therefore we refer to $\calC\rightarrow C$ and, in a slight abuse of notation, to $C$ as the \emph{coarse moduli space} of $\calC$. 

\begin{proof}[Proof of Proposition \ref{prop_coarsemodulispace}]
Using the identification of cone stacks with Artin fans from \cite[Theorem 3]{CCUW} (see also Section \ref{section_Artinfans} below), Proposition \ref{prop_coarsemodulispace} is a special case of \cite[Propostion 3.1.1]{ACMW}. The combinatorial cone stack $C^{comb}$ has the same objects as $\calC$. For two objects $\alpha$ and $\beta$ in $\calC^{comb}$ over cones $\sigma$ and $\tau$ respectively, the morphisms in $C^{comb}$ are the image of $\Hom_\calC(\alpha, \beta)$ in $\Hom(\sigma,\tau)$. We may now easily verify that $C^{comb}$ fulfils the axioms of a category fibered in groupoids over $\mathbf{RPC}^{f}$, that it has faithful monodromy, and that $\calC^{comb}\rightarrow C^{comb}$ is initial among all strict morphisms to combinatorial cone stacks with faithful monodromy.
\end{proof}

Cone stacks are a refinement of the notion of a \emph{generalized cone complexes}, as introduced in \cite{ACP} as a generalization of cone complexes. 

\begin{definition}[\cite{ACP}] A \emph{generalized cone complex} as a topological space together with a presentation as a colimit of a diagram of (not necessarily proper) face morphisms. 
\end{definition}

A morphism of generalized cone complexes is a continuous map that locally factors through a morphism in $\mathbf{RPC}$. The combinatorial cone stack associated to a cone stack with faithful monodromy defines a generalized cone complex. Conversely, adding all faces and pullback of invariant automorphisms, the defining diagram of a generalized cone complex generates a combinatorial cone stack. In fact, we have a natural equivalence (of $1$-categories)
\begin{equation*}
\big\{\textrm{cone stacks with faithful monodromy}\big\}\big/_{\big\{2\textrm{-isomorphisms}\big\}}\simeq \big\{\textrm{generalized cone complexes}\big\} \ .
\end{equation*}


\subsection{Graphs}

Expanding on \cite{Serre_trees} and \cite{CCUW}, a \emph{graph} $G$ consists of a set $X=X(G)$ together with an idempotent \emph{root map} $r\colon X\rightarrow X$ and an involution $i\colon X\rightarrow X$ such that $r\circ i=i\circ r$. We refer to the set $V(G)=r(X)$ as the set of \emph{vertices} of $G$ and to its complement as the set $H(G)$ of half edges of $G$. An element in the quotient $H(G)/i$ is of the form $\big[h\sim i(h)\big]$ for an half-edge $h$ of $G$; we refer to $\big[h\sim i(h)\big]$ as a \emph{finite edge} when $h\neq i(h)$ and otherwise as a \emph{leg}. So the quotient $X/i$ decomposes as a disjoint union $V(G)\sqcup E(G)\sqcup L(G)$, where $E(G)$ is the set of finite edges and $L(G)$ is the set of legs. 

We say that a graph $G$ is \emph{finite}, if $X(G)$ is finite. An \emph{order} on an edge $e=\big[h\sim i(h)\big]$ is the choice of a relation $h<i(h)$ or $h>i(h)$. For an ordered edge $e=\big[h<i(h)\big]$, we write $\overline{e}$ for the same edge with the reverse order $\big[h>i(h)\big]$.  A \emph{path} $\gamma$ in a graph $G$ is a tuple $(e_1,\ldots, e_n)$ of ordered edges $e_i$ of $G$ such that for every ordered edge $e_i=\big[h_i< \widetilde{h}_i\big]$ (with $1\leq i\leq n$) we have $r(h_i)=r(\widetilde{h}_{i+1})$. We write $v_i$ for the vertex $r(h_i)=r(i(h_{i+1}))$ as well as $v_0=r(i(h_1))$ and $v_n=r(h_n)$.  We say that $G$ is \emph{connected}, if for any two vertices $v,w$ there is a path $\gamma$ in $G$ with $v_0=v$ and $v_n=w$.  A path is said to be \emph{closed} if $v_0=v_n$; in this case we refer to $v_0=v_n$ as the \emph{base point} of the path $\gamma$. From now on we assume that all our graphs are connected.

A \emph{vertex weight} on a graph $G$ is a function $h\colon V(G)\rightarrow \Z_{\geq 0}$; a \emph{marking} $m$ on the set of legs of $G$ is a choice of total order on $L$. Whenever convenient we drop the reference to $h$ and $m$ from our notation and denote a weighted (marked) graph simply by $G$. 

The \emph{valence} $\val(v)$ of a vertex $v$ of $G$ is the number of half edges $h$ with $r(f)=v$. A weighted marked graph is said to be \emph{stable}, if for all vertices $v\in V(G)$ we have 
\begin{equation*}
2h(v)-2+\val(v)>0 \ . 
\end{equation*}
The \emph{genus} $g(G)$ of $G$ is defined to be $b_1(G)+\sum_{v\in V}h(v)$.

Let $G, G'$ be two weighted marked graphs. A \emph{(generalized) weighted edge contraction} is a map $\pi\colon X\rightarrow X'$ that fulfils the following axioms:
\begin{itemize}
\item $\pi$ commutes with commutes with $r$, $i$, and $h$; 
\item the preimage $\pi^{-1}(f')$ of each half-edge $h'\in H(G')$ consists of precisely one element $f$ (which is necessarily a half-edge of $G$);
\item $\pi$ induces an order preserving bijection $L\xrightarrow{\sim} L'$; and
\item for every $v'\in V(G')$ the preimage $\pi^{-1}(v)$ is a connected finite weighted graph of genus $h(v')$. 
\end{itemize}
We denote by $J_{g,n}$ the category, whose objects are finite weighted stable graph $G$ of genus $g$ with $n$ marked legs, and whose morphisms are weighted edge contractions. 


\subsection{Tropical curves}

\begin{definition}
Let $P$ be a monoid. A \emph{tropical curve} $\Gamma$ over $P$ is a finite weighted graph $G(\Gamma)=(G,h,m)$ together with a \emph{generalized edge length} $\vert .\vert \colon E(G)\rightarrow P-\{0\}$. 
\end{definition}

A tropical curve $\Gamma$ (over $P$) is said to be \emph{stable}, if $G(\Gamma)$ is stable. The \emph{genus} $g(\Gamma)$ of a tropical curve is the genus of the underlying weighted graph $G(\Gamma)$. A \emph{(generalized) weighted edge contraction} $\pi\colon \Gamma\rightarrow \Gamma'$ of tropical curves $\Gamma$ over $P$ and $\Gamma'$ over $P'$ consists of a monoid homomorphism $\pi^\flat\colon P\rightarrow P'$ and a weighted edge contraction $\pi\colon G(\Gamma)\rightarrow G(\Gamma')$ such that
\begin{itemize}
\item $\pi$ contracts an edge if and only $\pi^\flat(\vert e\vert)=0$ and
\item if $\pi(e)=e\in H(G')$, then $\pi^\flat(\vert e\vert) =\vert \pi(e)\vert$. 
\end{itemize}

 Let $g,n\geq 0$ such that $2g-2+n>0$. By \cite[Proposition 2.3]{CCUW}, there is a unique stack $\calM_{g,n}^{trop}$ over $(\mathbf{RPCC},\tau_{strict})$, whose fiber over a cone $\sigma$ is the groupoid of stable tropical tropical curves of genus $g$ with $n$ marked legs. We refer to $\calM_{g,n}^{trop}$ as the \emph{moduli stack of tropical curves (of genus $g$ with $n$ marked points)}. 

\begin{theorem}[\cite{CCUW} Theorem 1]
The stack $\calM_{g,n}^{trop}$ is a cone stack. 
\end{theorem}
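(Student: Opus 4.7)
My plan is to verify the two axioms of a cone stack for $\calM_{g,n}^{trop}$ directly, taking advantage of the equivalence between cone stacks and combinatorial cone stacks explained in Section 2.2. The stack property in the strict topology is the content of \cite[Proposition 2.3]{CCUW} (cited just before the theorem), so I only need to exhibit a chart and establish representability of the diagonal. The combinatorial cone stack $\calM_{g,n}^{trop, comb}$ associated with $\calM_{g,n}^{trop}$ has objects over a cone $\sigma$ given by stable tropical curves with edge lengths in $S_\sigma - \{0\}$, and morphisms over a face morphism $\tau \hookrightarrow \sigma$ given by weighted edge contractions of the underlying graphs compatible with the induced monoid homomorphism $S_\sigma \to S_\tau$.

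For the chart, I take the coproduct $\Sigma = \bigsqcup_{[G] \in J_{g,n}/\sim} \sigma_G$ over isomorphism classes of stable weighted marked graphs of genus $g$ with $n$ legs, where $\sigma_G := \Hom(\N^{E(G)}, \R_{\geq 0})$ has dual monoid $\N^{E(G)}$. Over $\sigma_G$ there is a \emph{tautological} tropical curve $\Gamma_G$ with underlying graph $G$ and edge lengths given by the standard basis elements of $\N^{E(G)}$, and this data defines the desired morphism $\sigma_G \to \calM_{g,n}^{trop}$. Faces of $\sigma_G$ correspond to subsets $T \subseteq E(G)$ and parametrize contractions $G \to G/T$, which are again stable of genus $g$ with $n$ legs — this is where the stability condition is essential, since it ensures contractions land in $J_{g,n}$. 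Strictness is immediate from the construction, and surjectivity is the observation that every stable tropical curve has a well-defined combinatorial type, hence is pulled back from exactly one $\sigma_G$.

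Representability of the diagonal amounts to showing that for any cone $\sigma$ and any two tropical curves $\Gamma_1, \Gamma_2$ over $\sigma$, the isomorphism functor $\Isom_\sigma(\Gamma_1, \Gamma_2)$ is representable by a cone complex over $\sigma$. An isomorphism $\Gamma_1 \xrightarrow{\sim} \Gamma_2$ decomposes as a combinatorial isomorphism $\phi\colon G(\Gamma_1) \xrightarrow{\sim} G(\Gamma_2)$ of the underlying weighted marked graphs, together with the requirement $|\phi(e)|_{\Gamma_2} = |e|_{\Gamma_1}$ in $S_\sigma$ for every edge $e$. Because both graphs are finite, there are only finitely many candidate graph isomorphisms $\phi$, and for each $\phi$ the edge-length compatibility is a system of equalities in the finitely generated monoid $S_\sigma$ cutting out a face of $\sigma$. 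The resulting finite union of faces is a cone subcomplex of $\sigma$ representing $\Isom_\sigma(\Gamma_1, \Gamma_2)$, which establishes axiom (i).

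The only genuinely combinatorial step is the bookkeeping in the third paragraph — in particular, keeping track of the actions of $\Aut(G)$ on the cones $\sigma_G$ and verifying that automorphisms which permute edges of the same length indeed correspond to face morphisms of $\sigma$. This is the main obstacle, but it is entirely finite: the category $J_{g,n}$ has finitely many isomorphism classes of objects and finitely many morphisms between any two of them, so the analysis reduces to straightforward combinatorics of weighted marked graphs. Once these two axioms are checked, the conclusion that $\calM_{g,n}^{trop}$ is a cone stack follows directly from the definition.
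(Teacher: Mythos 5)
Your overall strategy --- verifying the two axioms of a cone stack directly --- is a legitimate alternative to the route the paper sketches, which is to identify $\calM_{g,n}^{trop}$ with the cone stack associated to the combinatorial cone stack $J_{g,n}^{op}\to\mathbf{RPC}^{f}$, $G\mapsto\sigma_G$, and then invoke the general equivalence between cone stacks and combinatorial cone stacks; that route packages the diagonal condition into the general equivalence and never has to represent an $\Isom$ functor by hand. Your chart $\bigsqcup_{[G]}\sigma_G$ with its tautological families is exactly the right one, and the strictness/surjectivity claims, while asserted rather than proved, are correct.

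The problem is the third paragraph. First, an isomorphism $f^{\ast}\Gamma_1\xrightarrow{\sim}f^{\ast}\Gamma_2$ over a morphism $f\colon\tau\to\sigma$ is an isomorphism of the \emph{contracted} graphs $G(\Gamma_i)/T_i$, where $T_i$ is the set of edges whose length is sent to $0$ by $f^{\flat}\colon S_\sigma\to S_\tau$; such an isomorphism need not lift to an isomorphism $G(\Gamma_1)\xrightarrow{\sim}G(\Gamma_2)$. By enumerating only graph isomorphisms of the uncontracted graphs you represent a proper subfunctor of $\Isom_\sigma(\Gamma_1,\Gamma_2)$; the missing components, indexed by triples $(T_1,T_2,\phi)$ with $\phi\colon G(\Gamma_1)/T_1\xrightarrow{\sim}G(\Gamma_2)/T_2$, are precisely the source of the boundary monodromy that the paper exploits in the proof of Theorem \ref{thm_logtrop=modtrop}. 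Second, the locus cut out by the equalities $\vert\phi(e)\vert_{\Gamma_2}=\vert e\vert_{\Gamma_1}$ is \emph{not} a face of $\sigma$: an equality $u=v$ of two elements of $S_\sigma$ defines $\sigma\cap\{u-v=0\}$, not the vanishing locus of a single element of $S_\sigma$. For instance, if $\Gamma_1=\Gamma_2$ is the tautological curve over $\sigma_G=\R_{\geq 0}^{E(G)}$ for a two-vertex graph $G$ with parallel edges and $\phi$ swaps two of them, the condition is the diagonal $x_1=x_2$, a rational polyhedral subcone of $\R_{\geq 0}^{E(G)}$ that is not a face. The conclusion you want --- that $\Isom_\sigma(\Gamma_1,\Gamma_2)$ is a cone complex, namely a finite disjoint union of such subcones together with the identifications between components for nested contraction data --- is still true, but the argument as written does not establish it, and the final paragraph's claim that the whole verification reduces to checking that length-preserving automorphisms "correspond to face morphisms" conflates automorphisms of the cone $\sigma_G$ (which are face morphisms by convention) with the non-face sublocus on which a given graph automorphism preserves a given length assignment.
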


In fact, one way to prove this, is to realize that $\calM_{g,n}^{trop}$ is the cone stack associated to the combinatorial cone stack, defined by the functor
\begin{equation*}\begin{split}
J_{g,n}^{op}&\longrightarrow\mathbf{RPC}^f\\
G&\longmapsto \sigma_G=\R_{\geq 0}^E(G) \ ,
\end{split}\end{equation*}
where a weighted edge contraction $G\rightarrow G'$ naturally induces a face morphism $\sigma_{G'}\rightarrow \sigma_G$. 

The moduli stack $\calM_{g,n}^{trop}$ does not have faithful monodromy, since there are non-trivial automorphisms of graphs that only induces a trivial permutation of the set of edges. Nevertheless, the image of $J_{g,n}$ in $\mathbf{RPC}^f$ has faithful monodromy and, by \cite[Theorem 1.3]{Ulirsch_nonArchArtin} the resulting cone stack (with faithful monodromy) functions as a coarse moduli space for $\calM_{g,n}^{trop}$, in the sense that it is initial among all strict morphisms $\calM_{g,n}^{trop}\rightarrow \calC$ to cone stacks with faithful monodromy. In a slight abuse of notation we denote by $M_{g,n}^{trop}$ both the coarse moduli space of $\calM_{g,n}^{trop}$ and the associated generalized cone complex. 


\section{Uniformization in the tropics}
In this section we first recall from \cite{Bass_graphsofgroups, Serre_trees} the theory of \emph{graphs of groups} and their fundamental groups. We then use these techniques to expand on \cite{CMV} and construct tropical Teichm\"uller space $\calT_g^{trop}$ as a cone stack that is representable by a cone complex $T_g^{trop}$.

\subsection{Graphs of groups} 

\begin{definition}
A \emph{graph of groups} $\G$ is a graph $G=(V,E,L)$ together with 
\begin{itemize}
\item a group $G_v$ for every vertex $v$ of $G$;
\item a group $G_f$ for every half-edge $f$ of $G$ together with an isomorphism $G_f\xrightarrow{\sim} G_{i(f)}$ denoted by $g\mapsto \overline{g}$; and
\item monomorphisms $G_f\rightarrow G_{r(f)}$ for every half-edge $f$ of $G$ denoted by $g\mapsto a^g$. 
\end{itemize}
The group $G_v$ is called the \emph{vertex group} of the vertex $v\in V(G)$ and $G_f$ the \emph{edge group} of $f\in F(G)$. 
\end{definition}


A \emph{word} in a graph of groups $\G$ is a pair $(\gamma,\vec{g})$ consisting of a path $\gamma=(e_1,\ldots, e_n)$ in $G$ consisting of ordered edges $e_i=\big[f_i< \widetilde{f}_i\big]$ connecting $v_{i-1}$ to $v_i$ (with $1\leq i\leq n$) and a tuple $\vec{g}=(g_0,\ldots, g_n)$ of elements $g_i\in G_{v_i}$. Let $(\gamma,\vec{g})$ and $(\gamma',\vec{g'})$ be two words in $\G$ such that the path $\gamma'$ starts at the end point of $\gamma$, i.e. for which we have $v_0=v_{n+1}$. The \emph{concatenation} of $(\gamma,\vec{g})$ and $(\gamma',\vec{g'})$ is given by the concatenation of $\gamma\circ \gamma'$ and the tuple 
\begin{equation*}
\vec{g}\circ\vec{g}'=(g_0,\ldots, g_{n-1},g_n\cdot g_0',g_1',\ldots, g'_{n'}) \ .
\end{equation*}
Concatenation of words is associative and, writing  $(v,1_{G_v})$ for the trival word at the vertex $v$, we have $(v_0,1_{G_{v_0}})\circ (\gamma,\vec{g})=(\gamma,\vec{g})=(\gamma, \vec{g})\circ (v_n,1_{G_{v_n}})$. Moreover, for every word $(\gamma,\vec{g})$, there is an \emph{inverse word} $(\gamma^{-1},\vec{g}^{-1})$ given by the inverse $\gamma^{-1}=(e_n^{-1},\ldots, e_1^{-1})$ of $\gamma$ and the vector $\vec{g}^{-1}=(g_n^{-1},\ldots, g_0^{-1})$. 


\begin{definition}
Let $\G$ be a graph of groups and $v\in V(G)$. The \emph{fundamental groupoid} $\pi_1(\G)$ of $\G$ is the groupoid whose objects are the vertices of $G$ and whose morphisms are generated by the words in $\G$ subject to the relations
\begin{equation*}
\overline{e}=e^{-1} \quad\textrm{and}\quad e a^g e^{-1} =a^{\overline{g}}  
\end{equation*}
for all oriented edges $e=\big[f<\widetilde{f}\big]$ and $g\in G_f$. For a vertex $v\in V(G)$ the \emph{fundamental group} $\pi_1(\G,v)$ of $\G$ based at $v$ is the group of automorphism of $v$ in $\pi_1(\G)$. 
\end{definition}

Suppose that $G$ is a graph and $\G=(G,1)$ is the trivial graph of groups on $G$, i.e. the groups $G_v$ and $G_h$ are all trivial. Then $\pi_1(\G)=\pi_1(G)$, the classical fundamental groupoid of the graph $G$ (defined combinatorially). For every graph of groups $\G$ with underlying graph $G$, the canonical morphism $\G\rightarrow (G,1)$ induces a surjective homomorphism $\pi_1(\G)\twoheadrightarrow \pi_1(G)$; its kernel is the normal subgroupoid generated by all the $G_v$. 

Let $w\in V(G)$ be another base point of $G$. Let $\gamma$ be a path connecting $v$ to $w$. Then there is a natural isomorphism
\begin{equation*}\begin{split}
\pi_1(\G,v)&\xlongrightarrow{\sim}\pi_1(\G,w)\\
a&\longmapsto \gamma^{-1}a\gamma \ .
\end{split}\end{equation*}


\subsection{From weighted graphs to graphs of groups}
To a weighted graph $(G,h)$ we associate a graph of groups $\G(G,h)$ with underlying graph $G$ by endowing every vertex with the free group $F_{h(v)}$ on $h(v)$ generators and every half-edge $f$ with the trivial group (together with the unique monomorphisms $G_f={1}\hookrightarrow G_r(f)$). We then set
\begin{equation*}
\pi_1(G,h)=\pi_1\big(\G(G,h)\big) \qquad 
\end{equation*} 
as well as 
\begin{equation*}
\pi_1(G,h;v)=\pi_1\big(\G(G,h),v\big) 
\end{equation*}
for a base point $ v\in V(G)$ and refer to $\pi_1(G,h)$ (and $\pi_1(G,h;v)$) as the \emph{fundamental groupoid} $\pi_1(G,h)$ (respectively the \emph{fundamental group} $\pi_1(G,h; v)$ with base point $v$).

\begin{proposition}\label{prop_weightededgecontractions}
For a weighted edge contraction $\phi\colon(G',h')\rightarrow (G,h)$ there is an equivalence \begin{equation*}
\pi_1\big(\G(G',h')\big)\simeq\pi_1\big(\G(G,h)\big)\ .
\end{equation*}
\end{proposition}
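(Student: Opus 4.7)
The plan is to view $\G(G',h')$ as a ``refinement'' of $\G(G,h)$ whose vertices have been blown up into the fibers $\phi^{-1}(v)$, and to collapse each fiber back down using standard Bass--Serre theory for graphs of groups with trivial edge groups.

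First, I would record the baseline computation: for any weighted graph $(G,h)$, the graph of groups $\G(G,h)$ has trivial edge groups, so by Bass--Serre theory the fundamental group at any vertex $v_0$ is a free group of rank
\begin{equation*}
    b_1(G) + \sum_{v\in V(G)} h(v) = g(G).
\end{equation*}
Consequently, since weighted edge contractions preserve genus ($g(G')=g(G)$), both fundamental groupoids have automorphism groups of the same rank; this gives the rank count that any candidate equivalence must match, but does not yet produce a canonical functor.

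Next, I would build the functor $\phi_* \colon \pi_1(\G(G',h')) \to \pi_1(\G(G,h))$. On objects, send $v' \mapsto \phi(v')$. On words $(\gamma',\vec{g'})$, contract the edges of $\gamma'$ lying in some fiber $\phi^{-1}(v)$ to the trivial word at $v$, and send an element of $F_{h'(v')}$ into $F_{h(\phi(v'))}$ using the embedding given by step three below. Because the edge groups in $\G(G',h')$ are trivial, the defining relations $ea^ge^{-1}=a^{\bar g}$ degenerate and one only needs to check that the functor is well-defined on the concatenation of contracted words.

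The core of the proof is a collapsing lemma: if $\G_0$ is the full sub-graph of groups of $\G(G',h')$ supported on the connected subgraph $\phi^{-1}(v)$, then a choice of spanning tree of $\phi^{-1}(v)$ and a choice of basepoint $v' \in \phi^{-1}(v)$ identifies $\pi_1(\G_0,v')$ with a free group of rank $b_1(\phi^{-1}(v)) + \sum_{w'\in\phi^{-1}(v)} h'(w') = h(v)$. Extending the spanning trees of all fibers to a spanning tree $\tilde T$ of $G'$ whose image in $G$ is a spanning tree $T$ of $G$, the Bass--Serre presentation of $\pi_1(\G(G',h'),v')$ splits into (i) the generators internal to each fiber, which assemble into $F_{h(v)}$, and (ii) the loops corresponding to edges of $G'\setminus \tilde T$, which biject with the loops of $\pi_1(\G(G,h),v)$ corresponding to edges of $G\setminus T$. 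Reading off the presentation shows $\phi_*$ is fully faithful; essential surjectivity is immediate from the surjectivity of $\phi$ on vertices together with the connectedness of the fibers.

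The main obstacle is the bookkeeping in the collapsing lemma, since one must compatibly connect basepoints across the different fibers via $\tilde T$ in order to make the identification of generators canonical (rather than well-defined only up to inner automorphism). Once a consistent choice of spanning tree $\tilde T$ mapping to $T$ is fixed, however, the equivalence $\pi_1(\G(G',h')) \simeq \pi_1(\G(G,h))$ follows from the presentation, and the ambiguity in this choice corresponds precisely to the usual $2$-isomorphism ambiguity in an equivalence of groupoids.
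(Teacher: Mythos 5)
Your proof is correct and follows essentially the same route as the paper: collapse each fiber $\phi^{-1}(v)$ and identify its fundamental group (free of rank $b_1(\phi^{-1}(v))+\sum_{w'\in\phi^{-1}(v)}h'(w')=h(v)$) with the vertex group $F_{h(v)}$ of $\G(G,h)$; the paper packages this as an intermediate graph of groups on $G$ whose vertex group at $v$ is $\pi_1\big(\G(\pi^{-1}(v));v'\big)$, while you spell out the collapsing step explicitly via compatible spanning trees and the Bass--Serre presentation. One caveat: in your second paragraph, sending \emph{every} fiber edge of $\gamma'$ to the trivial word at $v$ would kill the $b_1(\phi^{-1}(v))$ loop generators and could not yield an equivalence --- only the edges lying in the chosen spanning tree $T_v$ of the fiber should become trivial, the remaining ones mapping to generators of $F_{h(v)}$ --- but your third paragraph's presentation argument already handles this correctly, so this is a matter of phrasing rather than a gap.
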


\begin{proof}
Let $\phi\colon(G',h')\rightarrow (G,h)$ be a weighted edge contraction. Choose base points $v'\in V(\pi^{-1}(v))$ for every $v\in V(G')$ and denote by $\G(\pi)$ the graph of groups (with underlying graph $G$) whose group at a vertex $v\in V(G)$ is given by $\pi_1\big(\G(\pi^{-1}(v));v'\big)$ (and trivial groups along all edges). The choice of an isomorphism $\pi_1\big(\G(\pi^{-1}(v));v'\big)\simeq F_{h(v)}$ induces the desired equivalence $\pi_1(G',h')\simeq\pi_1\big(\G(G,h)\big)$.
\end{proof}

\begin{corollary}
For a finite weighted graph $(G,h)$ of genus $g$ there is an equivalence
\begin{equation*}
\pi_1\big(\G(G,h)\big)\simeq F_g \ .
\end{equation*}
\end{corollary}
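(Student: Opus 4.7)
My plan is to deduce the equivalence directly from Proposition \ref{prop_weightededgecontractions} by contracting $(G, h)$ all the way down to a one-vertex graph of the correct weight. Let $(G_0, h_0)$ be the weighted graph consisting of a single vertex $v_0$ with $h_0(v_0) = g$, no finite edges, and legs (with their ordering) inherited from $(G, h)$. I would verify that the natural collapse map $\pi\colon (G, h) \to (G_0, h_0)$, sending every vertex of $G$ and every half-edge belonging to a finite edge of $G$ to $v_0$, and each leg of $G$ to its copy in $G_0$, is a weighted edge contraction in the sense of Section 2.4.

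Most of the axioms for $\pi$ are vacuous (since $G_0$ has no finite half-edges) or immediate from the construction: $\pi$ visibly commutes with the root map and the involution, and the induced bijection on legs preserves the order by definition. The only nontrivial axiom is the last one: that the preimage $\pi^{-1}(v_0)$, equipped with $h$, is a connected weighted graph of genus $h_0(v_0) = g$. But this is precisely the hypothesis that $(G, h)$ has genus $g = b_1(G) + \sum_{v \in V(G)} h(v)$, together with the assumed connectivity of $G$. With $\pi$ in hand, Proposition \ref{prop_weightededgecontractions} yields an equivalence $\pi_1(\G(G, h)) \simeq \pi_1(\G(G_0, h_0))$, and the right-hand side is by inspection the fundamental groupoid of a graph of groups with a single vertex whose vertex group is $F_g$ and no finite edges, namely $F_g$ viewed as a one-object groupoid.

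The only real content in the argument is thus the genus identity used to verify $\pi$, and this is tautological given the definition of the genus of a weighted graph. As a sanity check one can also prove the corollary directly, without invoking Proposition \ref{prop_weightededgecontractions}: pick a spanning tree $T$ of $G$ and apply the standard Bass--Serre presentation of $\pi_1(\G(G, h), v_0)$. Since every edge group of $\G(G, h)$ is trivial, the relations coming from the edges collapse and one is left with the free product of the vertex groups $F_{h(v)}$ together with one free generator per edge outside $T$. This simplifies to $F_{\sum_v h(v) + b_1(G)} = F_g$, matching the claim.
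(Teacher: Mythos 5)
Your proposal is correct and follows the paper's own argument: the paper likewise applies Proposition \ref{prop_weightededgecontractions} to the weighted edge contraction $(G,h)\rightarrow(\ast,g)$ collapsing everything to a single vertex of weight $g=g(G,h)$. Your added verification of the contraction axioms and the spanning-tree sanity check are consistent elaborations of the same route rather than a different proof.
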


\begin{proof}
Apply Proposition \ref{prop_weightededgecontractions} to the weighted edge contraction $(G,h)\rightarrow (\ast, g)$ that contract $G$ to a point with vertex weight $g=g(G,h)$. 
\end{proof}

This allows us to define the following.

\begin{definition}
Let $(G,h)$ be a finite weighted graph. A \emph{Teichm\"uller marking} on $(G,h)$ is an equivalence 
\begin{equation*}\phi\colon \pi_1(G,h)\xlongrightarrow{\sim} F_g \ .
\end{equation*} 
\end{definition}

In other words, a Teichm\"uller marking is an outer isomorphism class $\phi_v\colon \pi_1(\G,v)\xrightarrow{\sim}F_g$ for one (and, since $G$ is connected, all) $v\in V(G)$. 

\begin{definition}
Let $(G,h)$ be a finite weighted graph of genus $g$. Two equivalences $\phi_i\colon\pi_1(G,h)\xrightarrow{\sim}F_g$ (for $i=1,2$) are said to be \emph{topologically equivalent}, if for one (and therefore all) $v\in V(G)$ the induced surjective homomorphisms 
\begin{equation*}
F_g\xlongrightarrow{\phi_i}\pi_1(G,h;v)\twoheadlongrightarrow \pi_1(G,v)
\end{equation*}
for $i=1,2$ are equal.
\end{definition}

Topological equivalence defines an equivalence relation on the class of all Teichm\"uller markings; we write $\big[\phi\colon \G(\Gamma)\xrightarrow{\sim}F_g\big]$ for the topological equivalence class associated to a Teichm\"uller marking.

\subsection{Tropical Teichm\"uller space}

Given a tropical curve $\Gamma$ (over a monoid $P$), we write $\G(\Gamma)$ for the graph of groups associated to the underlying weighted graph $(G,h)$ of $\Gamma$. Moreover, we denote $\pi_1(\Gamma)=\pi_1\big(\G(\Gamma)\big)$ and $\pi_1(\Gamma,v)=\pi_1\big(\G(\Gamma),v\big)$ for $v\in V(G)$. A \emph{Teichm\"uller marking} on a tropical curve $\Gamma$ is a Teichm\"uller marking on the underlying finite weighted graph $G(\Gamma)$. 

Let $g\geq 2$. By \cite[Proposition 2.3]{CCUW} there is a unique stack $\calT_{g}^{trop}$ over $\big(\mathbf{RPCC},\tau_{strict}\big)$ whose fiber over a rational polyhedral cone $\sigma$ is the groupoid of pairs consisting of a stable tropical curve $\Gamma$ of genus $g$ together with a topological equivalence class of Teichm\"uller marking $\big[\phi\colon \G(\Gamma)\xrightarrow{\sim}F_g\big]$. 

\begin{theorem}\label{thm_tropTeichmueller}
The space $\calT_{g}^{trop}$ is representable by a cone complex $T_{g,n}^{trop}$. 
\end{theorem}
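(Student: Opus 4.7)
The plan is to use the equivalence of $2$-categories between cone stacks and combinatorial cone stacks established in \cite{CCUW}, and then to show that the combinatorial cone stack underlying $\calT_g^{trop}$ is equivalent to an honest poset of cones, which gives representability by a cone complex.

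First I would unpack the combinatorial cone stack $\calT_g^{trop,comb}$: an object over a cone $\sigma$ is a strict morphism $\sigma\to\calT_g^{trop}$, i.e.\ (after passing to the face $\sigma_G$ of full dimension) a triple $(G,h,[\phi])$ consisting of a finite stable weighted graph $(G,h)$ of genus $g$ together with a topological equivalence class of Teichm\"uller marking $\big[\phi\colon\pi_1(\G(G,h))\xrightarrow{\sim}F_g\big]$, where the corresponding cone is $\sigma_G=\R_{\geq 0}^{E(G)}$. A morphism $(G',h',[\phi'])\to(G,h,[\phi])$ is a weighted edge contraction $\pi\colon(G',h')\to(G,h)$ whose associated equivalence (from Proposition \ref{prop_weightededgecontractions}) carries $[\phi']$ to $[\phi]$; it induces a proper face morphism $\sigma_G\hookrightarrow\sigma_{G'}$ by dualization. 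The functor $(G,h,[\phi])\mapsto\sigma_G$ extends the construction producing $\calM_{g,n}^{trop}$ in \cite{CCUW} by adding the marking as an extra datum.

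The key step, and the main obstacle, is to verify that every object $(G,h,[\phi])$ has trivial automorphism group in $\calT_g^{trop,comb}$. An automorphism amounts to an automorphism $\alpha$ of the weighted graph $(G,h)$ with $[\phi\circ\pi_1(\alpha)]=[\phi]$, which by unwinding the definition of topological equivalence means that the induced outer automorphism of $\pi_1(G,v)$ that $\alpha$ produces after composing with the surjection $\pi_1(G,h;v)\twoheadrightarrow\pi_1(G,v)\simeq F_{b_1(G)}$ is trivial. I would then invoke the classical fact, underlying the action of $\Out(F_g)$ on Culler--Vogtmann Outer space \cite{CullerVogtmann}, that a combinatorial automorphism of a stable (weighted) graph that is trivial as an outer automorphism of the topological fundamental group must already be the identity on the graph. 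The vertex weights pose no additional difficulty because topological equivalence is, by design, insensitive to the edge-group structure and only records the quotient map to $\pi_1(G,v)$; hence rigidity of the topological marking on the underlying graph suffices.

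With trivial automorphism groups in hand, the combinatorial cone stack $\calT_g^{trop,comb}$ has faithful monodromy and, because no non-identity face morphism in the diagram is an isomorphism, it is in fact equivalent to a poset of cones connected by proper face morphisms. By Yoneda together with Proposition \ref{prop_coarsemodulispace} (the $\textbf{RPC}^f$-analogue of passing to the coarse moduli space), the associated stack is representable by the cone complex $T_g^{trop}$ obtained as the colimit
\begin{equation*}
T_g^{trop}=\colim_{(G,h,[\phi])}\sigma_G
\end{equation*}
taken over this poset. A direct inspection identifies this colimit with the cone complex constructed in \cite{CMV} as a simplicial completion of Culler--Vogtmann Outer space, which completes the proof.
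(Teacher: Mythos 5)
Your overall route is the same as the paper's: present $\calT_g^{trop}$ as the cone stack associated to the combinatorial cone stack whose objects are triples $(G,h,[\phi])$ lying over the cones $\sigma_G=\R_{\geq 0}^{E(G)}$, prove that these objects have no nontrivial automorphisms, deduce that the indexing category is a poset of cones connected by proper face morphisms, and take the colimit to obtain $T_g^{trop}$.

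The gap is in the step you yourself single out as the main obstacle. The ``classical fact'' you invoke --- that a combinatorial automorphism of a stable \emph{weighted} graph inducing the trivial outer automorphism of $\pi_1(G,v)$ must be the identity --- is false in that generality. It is a theorem for graphs all of whose vertices have valence at least $3$ (the setting of \cite{CullerVogtmann}), but stability of a weighted graph permits vertices of valence $1$ or $2$ carrying positive weight, and these are invisible to $\pi_1(G,v)$. Concretely: let $G$ consist of one weight-$0$ vertex $v$ carrying a loop and two weight-$1$ vertices $w_1,w_2$ each joined to $v$ by an edge. This graph is stable of genus $3$, has $\pi_1(G,v)\simeq\Z$ generated by the loop, and the automorphism exchanging $w_1$ and $w_2$ acts trivially on $\pi_1(G,v)$ while being far from the identity; even more drastically, two weight-$1$ vertices joined by a single edge form a stable genus-$2$ graph with trivial $\pi_1(G,v)$ and a nontrivial involution. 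So your sentence ``the vertex weights pose no additional difficulty'' is exactly backwards: the weights are the entire difficulty, precisely because topological equivalence only records the surjection onto $\pi_1(G,v)$, which cannot see the weighted part of the graph. Note that the paper argues this step differently, via faithfulness of the $\Aut(G)$-action on the full fundamental groupoid $\pi_1(G)$ (including its set of objects), which is a true statement and strictly stronger than injectivity of $\Aut(G)\rightarrow\Out(\pi_1(G,v))$; but to complete either argument one must still explain why an automorphism fixing the class $[\phi]$ is forced to act trivially on the data that the faithfulness statement detects, and that bridge is exactly what your appeal to the Culler--Vogtmann rigidity fact fails to supply.
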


Following \cite{CMV} we introduce the following terminology.

\begin{definition}
The cone complex $T_{g}^{trop}$ is called \emph{tropical Teichm\"uller space}.
\end{definition}

\begin{proof}[Proof of Theorem \ref{thm_tropTeichmueller}]
Consider the category $\widetilde{J}_{g}$ whose objects are tuples $\big(G,h,[\phi]\big)$ consisting of a vertex-weighted graph $(G,h)$ of genus $g$ and a topological equivalence class of a Teichm\"uller marking $\big[\phi\colon \G(G,h)\xrightarrow{\sim}F_g\big]$ and
whose morphism are weighted edge contractions. We note that, for a weighted edge contraction $(G,h)\rightarrow(G',h')$, a Teichm\"uller marking $\phi\colon\G(G,h)\xrightarrow{\sim}F_g$ naturally induces a Teichm\"uller marking of $(G',h')$ by Proposition \ref{prop_weightededgecontractions}. The natural function $\widetilde{J}_{g}\rightarrow \mathbf{RPC}^{f}$ given by $\big(G,h,[\phi,]\big)\mapsto \sigma_{(G,h,[\phi])}=\R_{\geq 0}^{E(G)}$ makes $\widetilde{J}_{g}$ into a category fibered in groupoids over $\mathbf{RPC}^{f}$, i.e. into a combinatorial cone stack. The associated cone stack is equivalent to $\calT_{g}^{trop}$, since strict morphisms $\sigma\rightarrow\calT_{g}^{trop}$ from rational polyhedral cones naturally correspond to objects in $\widetilde{J}_{g}$. 

We now show that $\widetilde{J}_{g}\rightarrow \mathbf{RPC}^{f}$ defines a cone complex: The operation of the automorphism group of a finite graph on its fundamental groupoid is faithful. Therefore the automorphism group of $\big(G,h,[\phi]\big)$ in $\widetilde{J}_{g}$ is trivial and thus $\widetilde{J}_{g}\rightarrow \mathbf{RPC}^{f}$ is fibered in sets (and not groupoids). It is a poset, since, whenever we have two weighted edge contraction $(G,h)\rightrightarrows (G',h')$, there already is an automorphism of $(G,h)$ that makes the diagram
\begin{center}\begin{tikzcd}
(G,h)\arrow[rr,"\simeq"]\arrow[rd]&&(G,h)\arrow[ld]\\
& (G',h') &
\end{tikzcd}\end{center}
commute. Thus the colimit of the diagram $\widetilde{J}_{g}\rightarrow \mathbf{RPC}^{f}$ is a cone complex $T_{g}^{trop}$.
\end{proof}

\begin{remark}
The locus of pairs $\big(\Gamma, [\phi\big]\big)$ in $T_g^{trop}$ where the vertex weight function is trivial is precisely the equal to space of metric graphs together with a Teichm\"uller marking $\phi\colon \pi_1(\Gamma)\xrightarrow{\sim} F_g$ (without reference to topological equivalence, since all vertex groups of $\G(\Gamma)$ are trivial). As explained in \cite[Section 3.2]{CMV} this space is naturally homeomorphic to the (not volume-normalized) Outer space in the sense of Culler and Vogtmann \cite{CullerVogtmann}. In \cite{CullerVogtmann} the authors impose the that for metric graphs $\Gamma$ in $\CV_g$ the condition 
\begin{equation*}
\sum_{e\in E(\Gamma)}\vert e\vert =1
\end{equation*}
on the total length of $\Gamma$ holds. As in \cite{CMV}, we do not follow this convention.
\end{remark}

\subsection{The quotient by $\Out(F_g)$} There is natural operation of the group $\Aut(F_g)$ on $\calT_{g}^{trop}$ that is given by sending $\big(\Gamma,[\phi]\big)$ to $\big(\Gamma, [g\circ \phi]\big)$ for $g\in \Aut(F_g)$. An equivalence $\phi\colon \pi_1(\Gamma)\xrightarrow{\sim}F_g$ is determined only up to inner automorphisms of $F_g$ and so the group $\Inn(F_g)$ of inner automorphisms of $F_g$ acts trivially on $\calT_g^{trop}$. Thus there is a natural induced operation of $\Out(F_g)=\Aut(F_g)/\Inn(F_g)$ on $\calT_g^{trop}$.

Consider now the natural morphism $\calT_g^{trop}\rightarrow \calM_g^{trop}$ that is given by forgetting the Teichm\"uller marking. Since $\Out(F_g)$ only operates on the markings, the map $\calT_g^{trop}\rightarrow \calM_g^{trop}$ is invariant under this operation and there is an induced morphism $\big[\calT_g^{trop}\big/\Out(F_g)\big]\rightarrow \calM_g^{trop}$. 

\begin{theorem}\label{thm_tropquot}
The relative coarse moduli space of $\big[\calT_g^{trop}\big/\Out(F_g)\big]$ over $\calM_g^{trop}$ is naturally equivalent to $\calM_g^{trop}$. 
\end{theorem}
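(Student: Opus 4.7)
The plan is to verify directly, using the combinatorial cone stack descriptions from Section~2 and the transitivity of the $\Out(F_g)$-action on Teichm\"uller markings, that the forgetful morphism $\pi\colon \bigl[\calT_g^{trop}\big/\Out(F_g)\bigr] \to \calM_g^{trop}$ satisfies the universal property of the relative coarse moduli space, in the sense of Proposition~\ref{prop_coarsemodulispace} adapted to the relative setting over $\calM_g^{trop}$.

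First I would unpack the combinatorial cone stack description. The cone stack $\calM_g^{trop}$ is associated to the diagram $J_g^{op} \to \mathbf{RPC}^f$, $(G,h)\mapsto \sigma_{G}=\R_{\geq 0}^{E(G)}$, while $\calT_g^{trop}$ is associated to $\widetilde{J}_g$, whose objects $(G,h,[\phi])$ have trivial automorphism groups by the argument in the proof of Theorem~\ref{thm_tropTeichmueller}. Hence all of the additional stacky structure in $\bigl[\calT_g^{trop}\big/\Out(F_g)\bigr]$ comes from the $\Out(F_g)$-action on topological equivalence classes of markings: a morphism $(G,h,[\phi]) \to (G',h',[\phi'])$ in the quotient stack is represented by a pair $(\alpha, g)$, where $\alpha\colon (G,h)\to (G',h')$ is a weighted edge contraction (possibly composed with a graph automorphism) and $g \in \Out(F_g)$ satisfies the compatibility $g\cdot \alpha_\ast[\phi]=[\phi']$.

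Next I would establish the key transitivity: the $\Out(F_g)$-action on the set of topological equivalence classes of Teichm\"uller markings of a fixed stable weighted graph $(G,h)$ of genus $g$ is transitive. Indeed, since $\pi_1(\G(G,h))\simeq F_g$ by the corollary following Proposition~\ref{prop_weightededgecontractions}, any two Teichm\"uller markings $\phi_1, \phi_2$ differ by post-composition with an automorphism of $F_g$, whose outer class sends $[\phi_1]$ to $[\phi_2]$. From this I would derive two consequences. First, every object $(G,h) \in \calM_g^{trop}$ lifts to $\bigl[\calT_g^{trop}\big/\Out(F_g)\bigr]$, and any two lifts are isomorphic via a morphism of the form $(\id, g)$; thus $\pi$ is essentially surjective with singleton essential fibers. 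Second, for every $\alpha \in \Aut(G,h)$ the markings $[\phi]$ and $\alpha_\ast[\phi]$ lie in the same $\Out(F_g)$-orbit, which yields a pair $(\alpha, g) \in \Aut_{[\calT_g^{trop}/\Out(F_g)]}(G,h,[\phi])$ mapping to $\alpha$; consequently the induced map
\begin{equation*}
\pi_\ast\colon \Aut_{[\calT_g^{trop}/\Out(F_g)]}(G,h,[\phi]) \longrightarrow \Aut_{\calM_g^{trop}}(G,h)
\end{equation*}
is surjective, with kernel equal to the stabilizer of $[\phi]$ in $\Out(F_g)$.

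Finally, I would invoke the combinatorial construction of the relative coarse moduli space, analogous to Proposition~\ref{prop_coarsemodulispace}, whereby the relative coarsification has the same essential objects as the source but with automorphism groups given by the quotient of $\Aut_{[\calT_g^{trop}/\Out(F_g)]}$ by the kernel of $\pi_\ast$. The two consequences above then identify this coarsification with $\calM_g^{trop}$. The main obstacle I anticipate is making the relative variant of Proposition~\ref{prop_coarsemodulispace} fully precise; a convenient workaround is to verify the universal property directly: any representable factorization $\bigl[\calT_g^{trop}\big/\Out(F_g)\bigr] \to \calX \to \calM_g^{trop}$ must send each automorphism of the form $(\id, g)$ to an identity in $\calX$ (by representability over $\calM_g^{trop}$), and combined with essential surjectivity this forces $\calX \to \calM_g^{trop}$ to be an equivalence.
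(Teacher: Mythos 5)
Your proposal is correct and follows essentially the same route as the paper: the paper's proof likewise establishes essential surjectivity (every tropical curve admits a Teichm\"uller marking), fullness (via Proposition \ref{prop_weightededgecontractions}, weighted edge contractions induce equivalences of fundamental groupoids, which together with the transitivity of the $\Out(F_g)$-action on markings lets every morphism lift), and then observes that passing to the relative coarse moduli space forces faithfulness. Your version simply spells out the transitivity lemma and the universal-property check that the paper leaves implicit.
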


\begin{proof}
The induced morphism $\big[\calT_g^{trop}\big/\Out(F_g)\big]\rightarrow \calM_g^{trop}$ is essentially surjective, since every tropical curve $\Gamma$ can be endowed with a Teichm\"uller marking $\phi\colon \pi_1(\Gamma)\xrightarrow{\sim}F_{g}$. It is full, since every weighted edge contraction $\phi\colon\Gamma\rightarrow\Gamma'$ induces an equivalence $\pi_1(\Gamma)\xrightarrow{\sim} \pi_1(\Gamma')$ by Proposition \ref{prop_weightededgecontractions}. Going from $\big[\calT_g^{trop}\big/\Out(F_g)\big]$ to the relative coarse moduli space over $\calM_g^{trop}$ makes the induced map also faithful and thus the result follows. 
\end{proof}


\section{Lifting via Artin fans}

In this section we use methods from logarithmic geometry in the sense of Kato-Fontaine-Illusie \cite{Kato_logstr} and, in particular, the theory of Artin fans (as in \cite{AW, ACMW,ACMUW, Ulirsch_nonArchArtin,CCUW}), to lift tropical Teichm\"uller space to the world of algebraic geometry and to study the process of tropicalization.

\subsection{Logarithmic structures} Recall from \cite{Kato_logstr} that a logarithmic structure on a scheme $\underline{X}$ is a pair $(M_X,\alpha_X)$ consisting of 
\begin{itemize}
\item a sheaf of monoids $M_X$ defined on the \'etale topology on $\underline{X}$, and 
\item a monoid homomorphism $\alpha_X\colon M_X\rightarrow (\calO_X,\cdot)$ that induces an isomorphism $\alpha_{X}^{-1}\calO_{\underline{X}}^\ast\simeq \calO_{\underline{X}}^\ast$. 
\end{itemize}
We refer to the tuple $X=(\underline{X},M_X,\alpha_X)$ consisting of a scheme $\underline{X}$ and a logarithmic structure $(M_X,\alpha_X)$ as a \emph{logarithmic scheme}. Whenever convenient we drop the reference to $\alpha_X$ and simply write $X=(\underline{X},\alpha_X)$ for a logarithmic scheme. We write $\Mbar_X$ for the quotient $\Mbar_X=M_X/M_{X}^\ast$, which is known as the \emph{characteristic monoid} of $X$. 

A logarithmic scheme $X$ is called \emph{fine and saturated} if \'etale locally there is a homomorphism $P_{\underline{X}}\rightarrow (\calO_{\underline{X}},\cdot)$ from the constant sheaf $P_{\underline{X}}$ associated to a fine and saturated monoid $P$ to $(\calO_{\underline{X}},\cdot)$ such that the logarithmic structure $M_X$ is given via the pushout square
\begin{center}\begin{tikzcd}
P_{\underline{X}}^\ast\arrow[rr] \arrow[d,"\subseteq"]&& \calO_{\underline{X}}^{\ast}\arrow[d]\\
P_{\underline{X}}\arrow[rr] && M_{X}
\end{tikzcd}\end{center}

For further details on logarithmic geometry we refer the avid reader to \cite{Kato_logstr}, \cite{Abramovichetal_logmoduli}, and \cite{Ogus_logbook}. From now on the terms \emph{logarithmic scheme} or \emph{logarithmic stack} will always refer to a fine and saturated logarithmic scheme or logarithmic stack. We denote the category of (fine and saturated) logarithmic scheme by $\mathbf{LSch}$ and the category of (fine and saturated) logarithmic stacks by $\mathbf{LSt}$. 

\subsection{Logarithmic curves} 
A \emph{logarithmic curve} over a logarithmic base scheme $S$ is a logarithmically smooth morphism $X\rightarrow S$ that is proper, integral, saturated, and has geometrically connected fibers of dimension one. 

\begin{theorem}[\cite{Kato_logsmoothcurves} Theorem 3.1] \label{thm_logcurves}
Let $ X\rightarrow S$ be a logarithmic curve. Then every point $x$ of $X$ has an \'etale neighborhood $V$ together with a morphism $\pi\colon V\rightarrow S$ such that one of the following holds:
\begin{enumerate}[(i)]
\item $V=\Spec \calO_S[u]$ with $M_V=\pi^\ast M_S$; 
\item $V=\Spec \calO_S[u]$ with $M_V=\pi^\ast M_S\oplus \N v$ with $\alpha_V(v)=u$; or 
\item $V=\Spec \calO_S[x,y]\big/(xy-t)$ for some $t\in\calO_S$ and 
\begin{equation*}
M_V=\pi^\ast M_S\oplus \N\alpha \oplus \N\beta\big/(\alpha +\beta=\delta)
\end{equation*}
for some $\delta\in \pi^\ast M_S$ with $\epsilon_V(\alpha)=x$, $\epsilon_V(\beta)=y$, and $\epsilon_S(\delta)=t$. 
\end{enumerate}
\end{theorem}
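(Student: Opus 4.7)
The plan is to prove a local structure theorem by combining Kato's chart criterion for log smoothness with the constraints that the fibers of $\pi$ have dimension one and that $\pi$ is integral and saturated. The strategy is first to classify the possible shapes of the relative characteristic monoid $\Mbar_{X/S,x}$ at a geometric point $x$, and then to realise each shape as an explicit local model by pulling back along the chart.

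First I would invoke the chart criterion for log smoothness: around any geometric point $x \in X$, after an \'etale localisation on both $X$ and $S$, there is a chart $P \to Q$ of the morphism such that $\ker(P^{gp} \to Q^{gp})$ and the torsion part of $\mathrm{coker}(P^{gp} \to Q^{gp})$ are finite of orders invertible on $S$, and the induced morphism $V \to S \times_{\Spec \Z[P]} \Spec \Z[Q]$ is \'etale. After further shrinking and passing to a minimal chart, one may assume $P \to Q$ induces a chart of $\Mbar_S \to \Mbar_X$ at the stalks over $x$.

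Next I would classify the possible minimal $\Mbar_{X/S,x}$ using the assumption that the fibers have dimension one and are integral and saturated. The rank of $Q^{gp}/P^{gp}$ contributes to the relative fiber dimension through the toric direction, while the \'etale factor supplies the remaining algebraic dimension. Together with integrality and saturatedness, this forces $\Mbar_{X/S,x}$ into exactly three shapes: (a) the trivial monoid, (b) the free monoid $\N v$ on one generator, or (c) $\N\alpha \oplus \N\beta / (\alpha + \beta = \delta)$ for some $\delta \in \Mbar_{S,\pi(x)}$. Other possibilities --- higher-rank free summands, torsion quotients of larger order, or non-saturated amalgamations --- either overshoot the fiber dimension or are excluded by saturatedness together with the invertible-torsion condition.

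Finally, each of the three cases gives its explicit local model by pulling back along the chart. In cases (a) and (b) the \'etale map to $S\times \A^1_\Z$ can be straightened to $V = \Spec \calO_S[u]$ by Zariski localisation and elementary smoothness arguments, and the log structures take the claimed form $\pi^* M_S$ or $\pi^* M_S \oplus \N v$ with $\epsilon_V(v) = u$. Case (c) will be the main obstacle: the \'etale map lands in $\Spec \calO_S[x,y]/(xy - t)$ with $t = \epsilon_S(\delta)$, and one must both straighten the \'etale neighbourhood down to the node itself and verify that one can choose $x, y$ to be genuine images of the monoid generators $\alpha, \beta$ under $\epsilon_V$, rather than merely unit multiples of such images. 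The resolution is to use the freedom in choosing lifts $P \to M_S$ and $\alpha, \beta \in M_V$ to absorb any unit discrepancy into $\calO_S^*$ and $\calO_V^*$; this is possible because $\Mbar_V$ determines $M_V$ precisely up to such units, so the claimed normal form can always be realised.
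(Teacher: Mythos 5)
The first thing to say is that the paper does not prove this statement: it is imported verbatim from F.~Kato's work on log smooth curves (the cited Theorem) and used as a black box, so there is no internal proof to compare against. Judged on its own terms, your outline follows the standard route, which is essentially the one F.~Kato takes: the chart criterion for log smoothness, a classification of the possible charts $P\to Q$ using relative dimension one together with integrality and saturatedness, and then a normalization of the chart to the three explicit local models. Your closing discussion of absorbing unit discrepancies so that $\alpha,\beta$ genuinely map to the coordinates $x,y$ (rather than unit multiples) is a real issue and you resolve it correctly.

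The gap is in the middle step, which is the entire content of the theorem and which you assert rather than prove. Saying that dimension one, integrality and saturatedness ``force'' the three shapes is precisely what must be established, and it requires a genuine monoid-theoretic argument: one has to classify integral, saturated morphisms $P\to Q$ of fs monoids with $Q^{gp}/P^{gp}$ of rank at most one, show in the nodal case that $Q$ is the pushout $P\oplus_{\N\delta}\N^2$ for some $\delta$ lying in $P$ (not merely in $Q$), and explicitly rule out, for instance, saturations adjoining fractional multiples of $\delta$ or extensions with torsion in $Q^{gp}/P^{gp}$. Two further points. First, classifying the relative characteristic monoid $\Mbar_{X/S,x}$ alone does not suffice as stated: its sharpening is trivial both at a smooth unmarked point and at a node, so one must work with the extension $\Mbar_{S,s}\to\Mbar_{X,x}$ itself (or with the unsharpened cokernel, which is $0$, $\N$, and $\Z$ in the three cases). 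Second, your dimension count is phrased as if the \'etale factor supplies dimension; it does not. With an \'etale chart the rank of $Q^{gp}/P^{gp}$ is forced to equal one at every point, so either you use the smooth form of the chart criterion and track the one-dimensional smooth factor separately, or you enlarge $Q$ by a free summand mapping to units; the case analysis has to be adjusted accordingly.
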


So the underlying family of curves $\overline{X}\rightarrow \overline{S}$ is flat and proper, and the each fiber is nodal curve with a finite number of (a priori unordered) sections that do not meet the singularities in each fiber. We define the \emph{moduli stack $\calM_{g,n}^{log}$ of logarithmic curves} to be the unique stack over $\mathbf{LSch}$ whose fiber over a logarithmic base scheme $S$ is the groupoid of stable logarithmic curves of genus $g$ with $n$ marked sections. 

The connection with the classical Deligne-Knudsen-Mumford moduli stack $\calMbar_{g,n}$ is established by the following:

\begin{theorem}[\cite{Kato_logsmoothcurves} Theorem 4.5]\label{thm_Katologsmoothcurves}
The moduli stack is represented by the pair $(\calMbar_{g,n},M_{g,n})$ where $M_{\calMbar_{g,n}}$ is the divisorial logarithmic structure associated to the boundary divisor of $\calMbar_{g,n}$, i.e. the complement of the locus $\calM_{g,n}$ of smooth $n$-marked curves of genus $g$ in $\calMbar_{g,n}$. 
\end{theorem}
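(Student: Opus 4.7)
The plan is to prove representability by producing a universal log curve on $(\calMbar_{g,n}, M_{\calMbar_{g,n}})$ and then showing that every log curve $X\to S$ arises via pullback from a unique log morphism $S\to (\calMbar_{g,n}, M_{\calMbar_{g,n}})$. The argument splits into (a) constructing the universal log curve, (b) producing a classifying log morphism from any log curve, and (c) verifying that the two constructions are mutually inverse.

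First, I would equip $\calMbar_{g,n}$ with its divisorial log structure coming from $\partial\calMbar_{g,n}$. By Knudsen--Deligne--Mumford, $\calMbar_{g,n}$ is smooth over $\Spec\Z$ with $\partial\calMbar_{g,n}$ a simple normal crossings divisor, so $M_{\calMbar_{g,n}}$ is fine and saturated with characteristic $\Mbar_{\calMbar_{g,n}, s}\simeq \N^k$ at a geometric point $s$ whose corresponding curve has $k$ nodes (one generator per local branch of $\partial\calMbar_{g,n}$, i.e.\ per node). On the universal curve $\calC_{g,n}$ I would combine the divisorial log structure pulled back from $\partial\calMbar_{g,n}$ with an additional $\N$-factor along each marked section, together with the node-smoothing relation $\alpha+\beta=\delta$ at each nodal locus of the universal family. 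A direct local computation shows that the resulting morphism $\calC_{g,n}^{log}\to (\calMbar_{g,n}, M_{\calMbar_{g,n}})$ fits into case (iii) of Theorem \ref{thm_logcurves} at nodes, case (ii) along marked sections, and case (i) elsewhere, so it is indeed a log curve.

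Second, given an arbitrary log curve $f\colon X\to S$, Kato's local structure theorem forces the underlying morphism $\underline{f}$ to be a flat, proper family of stable $n$-marked curves of genus $g$, which yields a unique classifying morphism $\underline{\psi}\colon \underline{S}\to \calMbar_{g,n}$ with $\underline{X}\simeq \underline{S}\times_{\calMbar_{g,n}} \calC_{g,n}$. To lift $\underline{\psi}$ to a log morphism, I would produce a compatible map of characteristic sheaves $\underline{\psi}^{-1}\Mbar_{\calMbar_{g,n}}\to \Mbar_S$: at a geometric point $s\in S$ whose fiber has $k$ nodes, case (iii) of Theorem \ref{thm_logcurves} supplies, for each node $p_i$, a canonical element $\delta_i\in \Mbar_{S,s}$ whose image in $\calO_{S,s}$ is the smoothing parameter of $p_i$. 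Sending the $i$-th generator of $\N^k=(\underline{\psi}^{-1}\Mbar_{\calMbar_{g,n}})_{s}$ to $\delta_i$ defines the desired lift, and mutual inverseness of the two constructions follows by inspection of the local models (i)--(iii), since pulling back the universal log curve along $\psi$ recovers the given $X\to S$ on each \'etale chart.

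The main obstacle is the middle step: showing that the elements $\delta_i\in \Mbar_S$ are independent of the local presentation and globalize to a well-defined morphism of characteristic sheaves coherent with the combinatorics of $\partial\calMbar_{g,n}$ around $\underline{\psi}(s)$. This relies on the uniqueness of $\delta$ modulo units in Theorem \ref{thm_logcurves}(iii), together with a careful identification of the local branches of $\partial\calMbar_{g,n}$ at $\underline{\psi}(s)$ with the nodes of the corresponding curve. Once one is consistent about orientations, the $\Z/2$ ambiguity from swapping the two branches of the local model $xy=t$ is precisely what forces the characteristic monoid to be $\N$ rather than $\Z$ at each node, and so causes no further trouble.
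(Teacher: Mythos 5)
The paper does not prove this statement at all: it is quoted verbatim from F.\ Kato's work on log smooth curves (the cited Theorem 4.5), so there is no internal proof to compare against. Your outline is, in shape, exactly the standard argument from that source: equip $\calMbar_{g,n}$ and the universal curve with the divisorial/boundary log structures, check via the local models (i)--(iii) that the universal family is a log curve, and show that the node-smoothing parameters $\delta_e$ of an arbitrary log curve assemble into the classifying log morphism.

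One step deserves more care than you give it. A morphism of log schemes $S\to(\calMbar_{g,n},M_{\calMbar_{g,n}})$ is strictly more data than the underlying map $\underline{\psi}$ together with a map of \emph{characteristic} sheaves $\underline{\psi}^{-1}\Mbar_{\calMbar_{g,n}}\to\Mbar_S$: you must lift to a map of log structures $\underline{\psi}^\ast M_{\calMbar_{g,n}}\to M_S$ compatible with the structure maps to $\calO_S$, and when the smoothing parameter $t_e$ pulls back to $0$ (i.e.\ the node persists over all of $S$) the fibre of $M_S\to\Mbar_S$ over $\delta_e$ is an $\calO_S^\ast$-torsor on which $\alpha_S$ is not injective, so the lift is not pinned down by its image in $\calO_S$. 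The fix is that Theorem \ref{thm_logcurves}(iii) hands you $\delta$ as an honest local section of $M_S$ (not merely of $\Mbar_S$), well defined up to the unit by which the local coordinates $x,y$ at the node are rescaled; matching that unit against the choice of local equation of the corresponding boundary branch of $\calMbar_{g,n}$ is precisely the torsor bookkeeping that occupies the technical heart of Kato's proof. Relatedly, your remark that the branch-swapping $\Z/2$ ``forces the characteristic monoid to be $\N$ rather than $\Z$'' is off target: swapping branches exchanges $\alpha$ and $\beta$ but fixes $\delta=\alpha+\beta$, and plays no role in why the characteristic is $\N^k$. Neither point invalidates the strategy, but the first is where the real work lies.
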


\subsection{From cone stacks to Artin fans}\label{section_Artinfans}

Let $S$ be a logarithmic scheme and $\sigma$ be a rational polyhedral cone. Denote by $\calA_\sigma$ the quotient stack
\begin{equation*}
\calA_\sigma=\big[\Spec \Z[S_\sigma]\big/\Spec \Z[S_\sigma^{gp}]\big]
\end{equation*}
of the affine toric variety $\Spec \Z[S_\sigma]$ by the diagonalizable group $\Spec\Z[S_\sigma^{gp}]$. By \cite[Proposition 5.17]{Olsson_LOG}, for every logarithmic scheme $X$ there is a natural isomorphism
\begin{equation*}
\Hom_{\mathbf{LSt}}\big(X,\calA_\sigma\big)=\Hom_{\mathbf{Mon}}\big(S_\sigma,\Mbar_{X}\big) \ .
\end{equation*}
As in \cite{Ulirsch_nonArchArtin} this observation implies that the association $\sigma\mapsto \calA_\sigma$ defines full and faithful functor from $\mathbf{RPC}$ to the category of logarithmic stacks. We refer to a logarithmic stack of the form $\calA_\sigma$ as an \emph{Artin cone}. 

\begin{definition}
An \emph{Artin fan} is a logarithmic algebraic stack that admits a cover by a disjoint union of Artin cones that is strict and \'etale. 
\end{definition}

In \cite{CCUW} we have seen the following:

\begin{theorem}[\cite{CCUW} Theorem 3]\label{thm_Artinfans=conestacks}
The category of Artin fans is naturally equivalent to the category of cone stacks. 
\end{theorem}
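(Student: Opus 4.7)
The plan is to produce an explicit pair of mutually quasi-inverse $2$-functors and check compatibility of charts. The guiding principle is that the fully faithful embedding $\sigma \mapsto \calA_\sigma$ from $\mathbf{RPC}$ into $\mathbf{LSt}$, already noted above via $\Hom_{\mathbf{LSt}}(X,\calA_\sigma) = \Hom_{\mathbf{Mon}}(S_\sigma,\Mbar_X)$, should translate the $\tau_{strict}$-descent data that defines a cone stack into the strict \'etale descent data that defines an Artin fan.

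In the forward direction, given a cone stack $\calC$, I would first extend $\sigma\mapsto \calA_\sigma$ to the category of cone complexes: if $\Sigma$ is a cone complex realized as a colimit of cones $\sigma_i$ along face morphisms, then each face morphism $\tau\hookrightarrow \sigma$ induces a strict open immersion $\calA_\tau \hookrightarrow \calA_\sigma$ (corresponding to localization of $S_\sigma$), and I would define $\calA_\Sigma$ by gluing the $\calA_{\sigma_i}$ along these open substacks. This gluing exists in $\mathbf{LSt}$ by standard open gluing, and the resulting functor $\Sigma \mapsto \calA_\Sigma$ is still fully faithful. Now pick a chart $\Sigma \to \calC$ and form the cone complex $R = \Sigma \times_\calC \Sigma$ together with its strict surjective groupoid structure $R \rightrightarrows \Sigma$. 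Applying the functor yields a groupoid $\calA_R \rightrightarrows \calA_\Sigma$ whose source and target maps are strict \'etale and surjective, and I would set $\calA_\calC := [\calA_\Sigma / \calA_R]$. This is an Artin fan by construction, and independence of the chart (up to canonical $2$-isomorphism) follows by comparing any two charts via their fiber product.

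Conversely, given an Artin fan $\calX$, I would recover a combinatorial cone stack $\calC_\calX$ whose objects over a cone $\sigma$ are the strict morphisms $\calA_\sigma \to \calX$ (equivalently, certain sections of $\Mbar_{\calX}$), with morphisms the commuting triangles; the full faithfulness of $\sigma\mapsto \calA_\sigma$ forces these triangles to arise from face morphisms, so $\calC_\calX$ is fibered in groupoids over $\mathbf{RPC}^f$. A strict \'etale cover $\bigsqcup \calA_{\sigma_i} \to \calX$ provides a chart $\bigsqcup \sigma_i \to \calC_\calX$ in the sense of cone stacks. To see that $\calC_{\calA_\calC} \simeq \calC$ and $\calA_{\calC_\calX} \simeq \calX$ I would reduce to the case of a chart by a single cone complex, where both equivalences follow from the fully faithful embedding together with descent for $\tau_{strict}$ on one side and for strict \'etale morphisms of Artin fans on the other.

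The main obstacle is the local comparison that bridges the two descent theories: one must show that every strict \'etale morphism $\calA_\tau \to \calA_\sigma$ between Artin cones arises uniquely from a face morphism $\tau \to \sigma$ in $\mathbf{RPC}$. This is not formal because strict \'etale morphisms a priori allow more flexibility than open immersions, and it requires a careful analysis of $\Mbar$ on $\calA_\sigma$ to show that such a morphism is forced to factor as the inclusion of a face followed by an isomorphism coming from the toric automorphisms already present in the definition of $\calA_\sigma$. Once this rigidity statement is in hand, together with the compatibility $\Hom_{\mathbf{LSt}}(\calA_\tau,\calA_\sigma) = \Hom_{\mathbf{Mon}}(S_\sigma,S_\tau)$, the global $2$-equivalence follows by assembling charts and descending.
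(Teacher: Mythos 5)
Your plan is correct and follows the same route as the source: the paper does not prove this statement itself but quotes it from [CCUW, Theorem~3], and the construction it sketches immediately afterwards (face morphisms inducing open immersions $\calA_\tau\subseteq\calA_\sigma$, gluing to get $\calA_\Sigma$, and passing to quotients of strict \'etale groupoids $\calA_R\rightrightarrows\calA_U$) is exactly the forward direction of your plan. You have also correctly isolated the crux of the argument in [CCUW], namely the rigidity statement that strict \'etale morphisms between Artin cones are, up to isomorphism, precisely those induced by face morphisms, which is what lets the two descent theories be matched.
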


When $\tau$ is a face of a rational polyhedral cone $\sigma$, the induced homomorphism $S_\sigma\rightarrow S_\tau$ determines an open immersion $\calA_\tau\subseteq\calA_\sigma$. So, if $\Sigma$ is a rational polyhedral cone complex, then we may construct the associated Artin fan $\calA_\Sigma$ as 
\begin{equation*}
\calA_\Sigma=\bigcup_{\sigma\subseteq\Sigma} \calA_\sigma \ .
\end{equation*}
In general, given a cone stack $\calC$, we may choose a strict groupoid presentation $\big[U\big/R\big]\simeq \calC$ in $\mathbf{RPCC}$ and construct the associated Artin fan $\calA_\calC$ as the quotient of the induced strict \'etale groupoid
\begin{equation*}
\calA_R\rightrightarrows \calA_U \ . 
\end{equation*}

\begin{remark}
In \cite{CCUW} the proof of Theorem \ref{thm_Artinfans=conestacks} is only written for logarithmic schemes over a field $k$. It, however, directly generalizes to logarithmic schemes over $\Z$ (and in fact to any other logarithmic base scheme with trivial logarithnmic structure).
\end{remark}

In order to keep our notation less bulky, we usually denote both the cone stack $\calC$ and the associated Artin fan $\calA_\calC$ with the same letter $\calC$. 


\subsection{Construction of $\calT_g^{log}$}\label{section_logTeichmueller}
Let $g,n\geq 0$ such that $2g-2+n>0$. By \cite[Theorem 4]{CCUW} the Artin fan associated to tropical moduli stack $\calM_{g,n}^{trop}$ is the category whose fiber over a logarithmic scheme $S$ is the groupoid of families of tropical curves over $S$. A \emph{family of tropical curves over $S$} consists of
\begin{itemize}
\item a collection $(\Gamma_s)$ of tropical curves $\Gamma_s\in\calM_{g,n}^{trop}(\Mbar_{S})$ with edge lengths in the characteristic monoid $\Mbar_S$ indexed by all geometric points $s$ of $S$; and
\item for every \'etale specialization $t\leadsto s$ of geometric points of $S$ a weighted edge contraction $\Gamma_s\rightarrow\Gamma_t$ such that, whenever $\Gamma_s$ is metrized via the composition $\vert .\vert_{t\leadsto s}\colon E(\Gamma)\rightarrow \Mbar_{S,s}\rightarrow \Mbar_{S,t}$, the tropical curve $\Gamma_t$ is given by contracting those edges $e$ in $\Gamma_s$ for which $\vert e\vert_{t\leadsto s}=0$. 
\end{itemize}
Again by \cite[Theorem 4]{CCUW}, there is a natural \emph{modular logarithmic tropicalization morphism} 
\begin{equation*}
\trop_{g,n}^{log}\colon \calM_{g,n}^{log}\longrightarrow\calM_{g,n}^{trop}
\end{equation*}
 that is strict, smooth, and surjective. It is given by associating to a logarithmic curve $X\rightarrow S$ the family $(\Gamma_{X_s})$ of \emph{dual tropical curves} of each fiber $X_s$ over a geometric point $s$ of $S$. The dual tropical curve $\Gamma_X$ of a logarithmic curve $X$ over a logarithmic point $S$ is defined as follows:
\begin{itemize}
\item the underlying graph $G_{X}$ is the \emph{dual graph} of the stable curve $\underline{X}$, so that its vertices $v$ correspond to the irreducible components $X_v$ of $\underline{X}$, an edge connecting two vertices $v,v'$ to a node connecting the two components $X_v$ and $X_{v'}$ and the legs of $G_X$ emanating from $v$ correspond to the marked points on $X_v$;
\item the vertex weight $h(v)$ is the genus of the normalization $\widetilde{X}_v$ of $X_v$; and 
\item the edge length $\vert e\vert\in \Mbar_{S,s}$ of an edge $e$ of $G(X)$ is the logarithmic deformation parameter $\delta_e\in\Mbar_{S,s}$ at the node $p_e$, as explained in Theorem \ref{thm_logcurves} (ii) above. 
\end{itemize}

\begin{definition}
We define $\calT_{g}^{log}$ to be the fiber product
\begin{equation*}
\calT_{g}^{log}=\calM_{g}^{log}\times_{\calM_{g}^{trop}}\calT_{g}^{trop}
\end{equation*}
over the logarithmic tropicalization morphism $\trop^{log}_{g}\colon \calM_{g}^{log}\rightarrow \calM_{g}^{trop}$ and the natural morphism $\calT_{g}^{trop}\rightarrow \calM_{g}^{trop}$ that forgets the Teichm\"uller marking. 
\end{definition}

Using the above description of $\calM_{g}^{trop}$ as a stack over $\mathbf{LSch}$ the stack $\calT_{g}^{log}$ is the category whose fiber over a logarithmic scheme $S$ is the groupoid of logarithmic curves $X\rightarrow S$ in $\calM_{g}^{log}(S)$ together with a topological equivalence class of a Teichm\"uller marking $\big[\phi_s\colon \pi_1(\Gamma_s)\xrightarrow{\sim} F_g\big]$ on every dual tropical curve $\Gamma_s$ (where $s$ are the geometric points of $S$) that are compatible with \'etale specialization.  

\begin{theorem}\label{thm_logstack=algstack}
The logarithmic stack $\calT_{g}^{log}$ is representable by a pair $\big(\underline{\calT}^{log}_{g}, M_{\calS^{log}_{g,n}}\big)$ consisting of:
\begin{enumerate}[(i)]
\item a Deligne-Mumford stack $\underline{\calT}^{log}_{g}$, that is smooth, universally closed, and locally of finite type over $\Z$; and
\item a fine and saturated logarithmic structure $M_{\calT^{log}_{g}}$ that is associated to the complement of the locus of smooth curves in $\underline{\calT}^{log}_{g}$, which has normal crossings over $\Z$. 
\end{enumerate}
\end{theorem}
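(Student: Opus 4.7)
The plan is to exploit the fiber product description $\calT_g^{log}=\calM_g^{log}\times_{\calM_g^{trop}}\calT_g^{trop}$ and to transfer properties from the two factors. By Theorem \ref{thm_Katologsmoothcurves} the left factor is representable by the proper smooth Deligne--Mumford stack $\calMbar_g$ over $\Z$ together with its divisorial normal crossings boundary log structure. By Theorem \ref{thm_Artinfans=conestacks} (in the form remarked on over $\Z$), both $\calM_g^{trop}$ and $\calT_g^{trop}$ correspond to Artin fans over $\Z$; by Theorem \ref{thm_tropTeichmueller}, $\calT_g^{trop}$ is in fact representable by the cone complex $T_g^{trop}$, so its Artin fan has trivial stabilizers.

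The first key observation is that the forgetful morphism $\calT_g^{trop}\to \calM_g^{trop}$ is representable, strict, and étale. Combinatorially it is the functor $\widetilde{J}_g\to J_g$, $(G,h,[\phi])\mapsto (G,h)$, and since by construction the cone associated to $(G,h,[\phi])$ is $\R_{\geq 0}^{E(G)}$, identical to the cone associated to $(G,h)$, the induced map of Artin cones is an isomorphism. The fibers, consisting of topological equivalence classes of Teichmüller markings on a fixed graph, are discrete but possibly infinite, so the morphism is only locally of finite type.

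Next I would base change along $\trop_g^{log}\colon \calM_g^{log}\to \calM_g^{trop}$, which by \cite{CCUW} is strict, smooth, and surjective. Strictness and étaleness are stable under base change, so the projection $\calT_g^{log}\to \calM_g^{log}$ is representable, strict, étale, and locally of finite type. Composing with $\calMbar_g$, which is Deligne--Mumford, smooth and locally of finite type over $\Z$, shows that $\underline{\calT}_g^{log}$ has the same properties over $\Z$. Moreover, strictness of the projection identifies the log structure on $\calT_g^{log}$ with the strict étale pullback of $M_{\calMbar_g}$; as the preimage of a normal crossings divisor under a strict étale morphism is again a normal crossings divisor over $\Z$, this yields the desired description of the boundary, whose complement is precisely the locus of smooth curves.

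The main obstacle is establishing universal closedness over $\Z$, since $\underline{\calT}_g^{log}\to \calMbar_g$ has infinite discrete fibers and hence universal closedness cannot be deduced from properness of $\calMbar_g$ alone. I would verify the valuative criterion directly. Given a valuation ring $R$ with fraction field $K$ and a $K$-point $(X_K,\phi_K)$ of $\underline{\calT}_g^{log}$, properness of $\calMbar_g$ produces, after possibly enlarging $R$, a stable $R$-lift $X_R$ of the underlying curve. The marking $\phi_K$ is an equivalence on the dual tropical curve $\Gamma_{X_K}$ of the generic fiber, and the specialization induces a weighted edge contraction $\Gamma_{X_s}\to \Gamma_{X_K}$, which by Proposition \ref{prop_weightededgecontractions} gives an equivalence $\pi_1\big(\G(\Gamma_{X_s})\big)\simeq \pi_1\big(\G(\Gamma_{X_K})\big)$. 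Pulling $\phi_K$ back through this equivalence delivers the required compatible Teichmüller marking on $\Gamma_{X_s}$, producing the $R$-lift in $\underline{\calT}_g^{log}$ and establishing universal closedness.
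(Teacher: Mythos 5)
Your proposal is correct and follows essentially the same route as the paper: define $\underline{\calT}_g^{log}$ as the fiber product $\calMbar_g\times_{\underline{\calM}_g^{trop}}\underline{\calT}_g^{trop}$, use that $\calT_g^{trop}\rightarrow\calM_g^{trop}$ is strict (hence \'etale-locally an isomorphism) to transfer smoothness, local finiteness, and the normal crossings boundary from $\calMbar_g$, and invoke Theorem \ref{thm_Katologsmoothcurves} for the log structure. Your explicit valuative-criterion verification of universal closedness, extending the Teichm\"uller marking across the weighted edge contraction $\Gamma_{X_s}\rightarrow\Gamma_{X_K}$ via Proposition \ref{prop_weightededgecontractions}, correctly fills in a step the paper's proof treats as immediate from strictness and surjectivity.
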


\begin{proof}
We may define $\underline{\calT}^{log}_{g}$ as the fibered product 
\begin{equation*}
\underline{\calT}_{g}^{log}=\calMbar_{g}\times_{\underline{\calM}_{g}^{trop}}\underline{\calT}_{g}^{trop}\ .
\end{equation*}
Since $\calT_g^{trop}\rightarrow \calM_g^{trop}$ is strict and surjective, it immediately follows that $\underline{\calT}_{g}^{log}$ is smooth, universally closed, and locally of finite type over $\Z$. We endow $\underline{\calT}_{g}^{log}$ with the logarithmic structure $M_{\calT^{log}_{g}}$ that is associated to the pullback of the boundary divisor of $\calMbar_g$, which has normal crossings, since the boundary divisor on $\calMbar_g$ has normal crossings on the map $\calT_g^{trop}\rightarrow \calM_g^{trop}$ is strict and therefore also smooth (in fact, \'etale locally an isomorphism). By \cite[Theorem 4]{CCUW}, the tropicalization morphism $\calM_{g}^{log}\rightarrow\calM_{g}^{trop}$ is strict and therefore we have 
\begin{equation*}
\calT_g^{log}\simeq \calM_g^{log}\times_{\underline{\calM}_g^{trop}}\underline{\calT}_g^{trop} \ . 
\end{equation*}
This, together with \cite[Theorem 4.5]{Kato_logsmoothcurves}, i.e. Theorem \ref{thm_Katologsmoothcurves} above, implies our claim. 
\end{proof}

\begin{remark}
The moduli stack $\calT^{log}_{g}$ is not separated. The reason is that e.g. in a stable degeneration of a smooth curve, we have exactly one equivalence class of Teichm\"uller markings in the generic fiber and very many in the special fiber, whenever its dual graph has non-trivial cycles. 
\end{remark}

\begin{remark}
It follows a posteriori from Theorem \ref{thm_logtrop=modtrop} that the boundary divisor of $\underline{\calS}_{g}$ even has simple normal crossings, i.e. that all strata of the boundary divisor are smooth over $\Z$. 
\end{remark}

The following Theorem \ref{thm_logquot} lifts Theorem \ref{thm_tropquot} to the logarithmic category. 

\begin{theorem}\label{thm_logquot}
The relative coarse moduli space of the stack quotient $\big[\calT_{g}\big/\Out(F_g)\big]$ over $\calM_{g}^{\log}$ is equivalent to $\calMbar_{g}^{log}$. 
\end{theorem}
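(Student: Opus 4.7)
The plan is to reduce Theorem \ref{thm_logquot} to its tropical counterpart Theorem \ref{thm_tropquot} via the fiber product description $\calT_g^{log}=\calM_g^{log}\times_{\calM_g^{trop}}\calT_g^{trop}$ from Section \ref{section_logTeichmueller}. Since the $\Out(F_g)$-action only permutes Teichm\"uller markings and leaves both the underlying logarithmic curve and the dual tropical curve in $\calM_g^{trop}$ untouched, it factors through the second factor, yielding a natural equivalence
\begin{equation*}
\bigl[\calT_g^{log}/\Out(F_g)\bigr]\;\simeq\;\calM_g^{log}\times_{\calM_g^{trop}}\bigl[\calT_g^{trop}/\Out(F_g)\bigr].
\end{equation*}

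The bulk of the work is then to verify that the formation of relative coarse moduli spaces is compatible with base change along the strict smooth tropicalization morphism $\trop_g^{log}\colon\calM_g^{log}\to\calM_g^{trop}$ from \cite[Theorem 4]{CCUW}. Granting this (which follows by unwinding the universal property of \cite[Theorem 3.1]{AOV} in the logarithmic setting, using that the base change is representable, strict and smooth), Theorem \ref{thm_tropquot} identifies the relative coarse moduli space of $[\calT_g^{trop}/\Out(F_g)]$ over $\calM_g^{trop}$ with $\calM_g^{trop}$ itself, and the fiber product collapses to $\calM_g^{log}\simeq\calMbar_g^{log}$, as desired.

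Alternatively, and perhaps more transparently, I would verify the universal property directly by mimicking the proof of Theorem \ref{thm_tropquot} one categorical level up. The induced morphism $\bigl[\calT_g^{log}/\Out(F_g)\bigr]\to\calM_g^{log}$ is essentially surjective, since every family of dual tropical curves associated to a logarithmic curve admits a Teichm\"uller marking. It is full by Proposition \ref{prop_weightededgecontractions}: any two Teichm\"uller markings of the family $(\Gamma_{X_s})$ of dual tropical curves of $X\to S$ differ, \'etale-locally on $S$ and compatibly with specialization, by an element of $\Out(F_g)$. Passing to the relative coarse moduli space over $\calM_g^{log}$ then enforces faithfulness, which completes the argument.

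The main obstacle is the bookkeeping in the logarithmic setting: one must ensure that the $\Out(F_g)$-ambiguity in the markings can be trivialized \'etale-locally on $S$ in a way that is compatible with all \'etale specializations of geometric points, so that one obtains an honest automorphism of $\calT_g^{log}(S)$ rather than just a pointwise identification. This is precisely what Proposition \ref{prop_weightededgecontractions} delivers, since the equivalence of fundamental groupoids it provides is canonical up to outer automorphism and hence descends along the edge contractions that encode specializations.
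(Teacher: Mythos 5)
Your first argument is exactly the paper's proof: the paper deduces Theorem \ref{thm_logquot} in two lines from Theorem \ref{thm_tropquot} together with the fiber-product definition $\calT_g^{log}=\calM_g^{log}\times_{\calM_g^{trop}}\calT_g^{trop}$, and your discussion of base-changing the relative coarse moduli space along the strict smooth morphism $\trop_g^{log}$ simply makes explicit what the paper calls an ``immediate consequence.'' The alternative direct verification you sketch is a reasonable cross-check but is not needed.
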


\begin{proof}
By Theorem \ref{thm_tropquot} the relative coarse moduli space of the stack quotient $\big[\calT_{g}^{trop}\big/\Out(F_g)\big]$ over $\calM_{g}^{trop}$ is equivalent to $\calM_{g}^{trop}$. The claim is an immediate consequence of this and of the definition of $\calT_{g}^{log}$ as a fibered product $\calM_{g}^{log}\times_{\calM_{g}^{trop}}\calT_{g}^{trop}$. 
\end{proof}


\subsection{From the fundamental category of a logarithmic stack to its tropicalization}
Let $\calX$ be a logarithmic algebraic stack. Denote by $\widetilde{\Pi}_1(\calX)$ the category whose objects are the geometric points $x\rightarrow\calX$ and whose morphisms are \'etale specializations $x\leadsto y$ in $\calX$. We say that an \'etale specialization $x\leadsto y$ is \emph{strict}, if the induced map $\Mbar_{\calX,y}\rightarrow \Mbar_{\calX,x}$ is an isomorphism.

\begin{definition}
The \emph{fundamental category} $\Pi_1(\calX)$ of a logarithmic stack $\calX$ is defined to be the localization of $\widetilde{\Pi}_1(\calX)$ along the class of strict specializations. 
\end{definition}

For a geometric point $x\rightarrow \calX$ we write $\sigma_x$ for the rational polyhedral cone $\Hom\big(\Mbar_{\calX,x},\R_{\geq 0}\big)$. For an \'etale specialization $x\leadsto y$ the induced morphism $\sigma_x\rightarrow \sigma_y$ is a face morphism and, whenever $x\leadsto y$ is strict, this map is an isomorphism. So there is a natural functor $\Pi_1(\calX)\rightarrow\mathbf{RPC}^{f}$ given by the association $x\mapsto \sigma_x$. 

\begin{proposition}\label{prop_Pi=combconestack}
If $\calX$ is a logarithmically smooth (over a base scheme $S$ with trivial logarithmic structure), then the functor $x\mapsto \sigma_x$ makes $\Pi_1(\calX)$ into a category fibered in groupoids over $\mathbf{RPC}^{f}$. 
\end{proposition}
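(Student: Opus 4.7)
The plan is to verify directly the two axioms of a category fibered in groupoids over $\mathbf{RPC}^{f}$: every face morphism $\tau\to\sigma_y$ in $\mathbf{RPC}^{f}$ whose target is the image of an object $y$ admits a cartesian lift to $\Pi_1(\calX)$, and every morphism of $\Pi_1(\calX)$ is cartesian.

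First I would check that $x\mapsto\sigma_x$ is well defined as a functor to $\mathbf{RPC}^{f}$. An \'etale specialization $x\leadsto y$ yields a cospecialization homomorphism $\Mbar_{\calX,x}\to\Mbar_{\calX,y}$ of characteristic monoids, and I would observe that log smoothness of $\calX$ over the base (with trivial logarithmic structure) guarantees that this realizes $\Mbar_{\calX,x}$ as the sharp quotient of a localization of $\Mbar_{\calX,y}$ at a face. Dualizing, we obtain a face morphism $\sigma_x\to\sigma_y$ in $\mathbf{RPC}^{f}$; strict specializations map to isomorphisms of cones and are therefore inverted when passing to the localized category $\Pi_1(\calX)$.

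The main step is the lifting axiom. Here I would appeal to Kato's structure theorem for log smooth morphisms: at each geometric point $y$ of $\calX$, with $P_y:=\Mbar_{\calX,y}$, there exists a strict \'etale neighborhood $V\to\calX$ of $y$ together with a strict smooth morphism $V\to \Spec\Z[P_y]$. The torus orbits of $\Spec\Z[P_y]$ are indexed by the faces $F\subseteq P_y$; the orbit $O_F$ has constant characteristic monoid $P_y/F$, whose dual cone is exactly the face $\tau_F\subseteq\sigma_y$ corresponding to $F$. Given a face morphism $\tau\hookrightarrow\sigma_y$, I would select the corresponding face $F\subseteq P_y$ and take $x$ to be any geometric point of $V$ lying over $O_F$. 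Then $\sigma_x=\tau$ and, since $y$ lies in the closure $\overline{O_F}$, the point $x$ \'etale-specializes to $y$, giving the desired lift.

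The hardest part is establishing that the lift is cartesian, equivalently, unique up to unique isomorphism in $\Pi_1(\calX)$. Two choices $x,x'$ in the preimage of the irreducible orbit $O_F$ share the same characteristic monoid $P_y/F$; a common generization $x''$ inside this preimage provides strict specializations $x''\leadsto x$ and $x''\leadsto x'$, which become isomorphisms after inverting strict specializations, yielding a canonical isomorphism $x\simeq x'$ over $\id_\tau$ in $\Pi_1(\calX)$. Compatibility with composition of face morphisms follows from transitivity of specialization, and together these verify both fibered-in-groupoids axioms. Alternatively, the same argument may be packaged by observing that log smoothness makes the structure morphism $\calX\to\calA_\calX$ to the Artin fan of $\calX$ strict and smooth, hence induces an equivalence $\Pi_1(\calX)\simeq\Pi_1(\calA_\calX)$, and the right-hand side is tautologically the combinatorial cone stack associated to $\calA_\calX$ by Theorem \ref{thm_Artinfans=conestacks}.
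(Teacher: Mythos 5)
Your argument is correct and is essentially the paper's own proof written out in full: both reduce, via Kato's structure theorem and \'etale locality, to the toric local model $\Spec\calO_S[P]$ and then invoke the order-reversing correspondence between torus orbits and faces of $\sigma_P$ (the paper compresses your lifting and cartesianness verifications into the single observation that $\Pi_1(\Spec\calO_S[P])$ is equivalent to the poset of generic points of torus orbits). One small slip worth fixing: the map attached to an \'etale specialization $x\leadsto y$ goes $\Mbar_{\calX,y}\to\Mbar_{\calX,x}$, not the other way around, as your own subsequent description of $\Mbar_{\calX,x}$ as a sharp localization of $\Mbar_{\calX,y}$ at a face already indicates.
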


\begin{proof}
We may check the two axioms of a category fibered in groupoids \'etale locally on $\calX$ and so we may assume that $\calX$ is an affine toric variety $\Spec \calO_S[P]$ over $S$ (where $P$ is a fine and saturated monoid). In this case $\Pi_1\big(\Spec \calO_S[P]\big)$ is equivalent to the poset of (generic points of) torus orbits of  $\Spec \calO_S[P]$. Then the statement follows from the order-reversing correspondence between torus orbits of $\Spec\calO_S[P]$ and faces of $\sigma_P=\Hom(P,\R_{\geq 0})$. 
\end{proof}

In other words, Proposition \ref{prop_Pi=combconestack} tells us that $\Pi_1(\calX)$ is a combinatorial cone stack. As explained in Proposition  \ref{prop_coarsemodulispace} above, we may associate to $\Pi_1(\calX)$ its coarse moduli space $\calC_\calX$. So the combinatorial cone stack $\calC_\calX^{comb}$ with faithful monodromy and comes with a strict morphism $\Pi_1(\calX)\rightarrow \calC_\calX$ that is initial among all strict morphisms to cone stacks with faithful monodromy. 

\begin{proposition}\label{prop_factorization}
Let $\calX$ be logarithmically smooth (over a base scheme $S$ with trivial logarithmic structure). Then there is a strict morphism $\calX\rightarrow \calC_\calX$ that is initial among strict morphism to cone stacks with faithful monodromy.
\end{proposition}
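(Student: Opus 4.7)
The plan is to produce the morphism $\calX \to \calC_\calX$ by an \'etale-local construction and gluing via the characteristic sheaf $\Mbar_\calX$, and then to deduce initiality from the universal property of the coarse moduli of combinatorial cone stacks established in Proposition \ref{prop_coarsemodulispace}, translated into the setting of cone stacks via the equivalence with Artin fans in Theorem \ref{thm_Artinfans=conestacks}.

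For the existence of the morphism, I would work \'etale-locally on $\calX$. Given a geometric point $x\to\calX$, logarithmic smoothness yields a strict \'etale neighborhood $U\to\calX$ admitting a smooth strict morphism $U\to\Spec\calO_S[P]$, where I may take $P=\Mbar_{\calX,x}$. Composing with the tautological strict morphism $\Spec\calO_S[P]\to \calA_{\sigma_x}=\bigl[\Spec\Z[P]\big/\Spec\Z[P^{gp}]\bigr]$ produces a strict morphism $U\to\calA_{\sigma_x}$, where $\sigma_x$ is the cone attached to $x$ in Proposition \ref{prop_Pi=combconestack}. Any two such local morphisms over the same neighborhood differ only by the automorphism of $\sigma_x$ coming from the choice of presentation $P\simeq\Mbar_{\calX,x}$. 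These monodromy ambiguities are exactly what disappear when passing from $\Pi_1(\calX)$ to its coarse moduli space $\calC_\calX$, so the local morphisms descend to a well-defined global strict morphism $\calX\to\calC_\calX$.

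The central step, and the expected main obstacle, is verifying initiality. Given any strict morphism $f\colon\calX\to\calD$ to a cone stack $\calD$ with faithful monodromy, strictness identifies $f^{-1}\Mbar_\calD\xrightarrow{\sim}\Mbar_\calX$. Hence on any chart $U\to\calA_{\sigma_x}$ as above, $f|_U$ is classified by a cone of $\calD$ together with a face embedding of $\sigma_x$, so that $f$ assembles into a morphism of combinatorial cone stacks $\Pi_1(\calX)\to\calD^{comb}$. By Proposition \ref{prop_coarsemodulispace} this morphism factors uniquely through $\calC_\calX^{comb}$. The faithful monodromy hypothesis on $\calD$ is crucial here: it ensures that in the target there is no nontrivial $2$-isomorphism to choose from, so the local factorizations glue without further ambiguity to a unique morphism $\calC_\calX\to\calD$ through which $f$ factors. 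This argument is the cone-stack analogue of \cite[Proposition 3.1.1]{ACMW}, which already underlay the proof of Proposition \ref{prop_coarsemodulispace}, and the passage between the two universal properties is exactly the content of Theorem \ref{thm_Artinfans=conestacks}.
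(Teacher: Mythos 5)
Your proposal is correct and follows essentially the same route as the paper: construct the morphism locally on small (neat) charts via Olsson's correspondence $\Hom_{\mathbf{LSt}}(U,\calA_\sigma)=\Hom_{\mathbf{Mon}}(S_\sigma,\Mbar_U)$, and reduce initiality to the universal property of the coarse moduli space, which in both cases ultimately rests on \cite[Proposition 3.1.1]{ACMW}. The paper packages the local-to-global step slightly differently --- identifying $\calC_\calX$ with the Artin fan $\calA_\calX$ of \cite{ACMW} by realizing both as colimits of representable morphisms over Olsson's stack $\LOG_S$ --- but this is the same descent argument you sketch when you observe that the monodromy ambiguities of the local charts disappear upon passing to the coarse moduli space.
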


Here, we again lift $\calC_\calX$ to the category of logarithmic stacks as an Artin fan. In \cite[Propositon 3.1.1]{ACMW} we see that every (reasonable) logarithmic stack $\calX$ admits a strict morphism $\calX\rightarrow\calA_\calX$ to an Artin fan with faithful monodromy that is initial among all strict morphisms to Artin fans with faithful monodromy. When $\calX$ is logarithmically smooth, Proposition \ref{prop_factorization} tells us that the Artin fan associated to $\calC_\calX$ is equivalent to $\calA_\calX$ so that the diagram
\begin{equation*}\begin{tikzcd}
&\calX\arrow[ld]\arrow[rd]&\\
\calC_\calX\arrow[rr,"\simeq"]& & \calA_\calX 
\end{tikzcd}\end{equation*}
commutes. We refer to $\trop_\calX\colon X\rightarrow\calC_\calX$ as the \emph{logarithmic tropicalization morphism} associated to $\calX$ and to $\calC_\calX$ as the \emph{logarithmic tropicalization of $\calX$.}

\begin{proof}[Proof of Proposition \ref{prop_factorization}]
Let us first assume that $\calX$ is represented by a logarithmically smooth scheme $X$ that is \emph{small}, i.e. that $X$ has a unique closed logarithmic stratum. In this case, the lift of $\calC_X$ is given by $\calA_\sigma$ where $\sigma$ is the rational polyhedral cone $\Hom(\Mbar_X,\R_{\geq 0})$ dual to the characteristic monoid of $X$. The strict morphism $X\rightarrow\calA_\sigma$ is the one associated to the identity under the natural correspondence
\begin{equation*}
\Hom_{\mathbf{LSt}}\big(X,\calA_\sigma\big)=\Hom_{\mathbf{Mon}}\big(S_\sigma,\Mbar_{X}\big) 
\end{equation*}
from \cite[Proposition 5.17]{Olsson_LOG}. We may now continue our argument as in \cite[Proposition 3.1.1]{ACMW} and show that this morphism is initial among strict morphisms to Artin fans with faithful monodromy. In the general situation (when $\calX$ is not small), both $\calA_\calX$ and $\calC_\calX$ arises as colimits of representable morphisms over Olsson's stack $\LOG_S$ of logarithmic structures over $S$ (as introduced in \cite{Olsson_LOG}) and therefore both constructions agree.
\end{proof}

\subsection{Tropicalization of $\calM_{g}^{log}$ and $\calT_{g}^{log}$}

The following Theorem \ref{thm_logtrop=modtrop} will imply Theorem \ref{thm_skel=trop} from the introduction (see Theorem \ref{thm_skel=tropdetail} below). In the case of $\calM_{g}^{log}$ it also rephrases \cite[Theorem 1.3]{Ulirsch_nonArchArtin}.

\begin{theorem}\label{thm_logtrop=modtrop}
The logarithmic tropicalization of $\calM_{g}^{log}$ is isomorphic to the coarse moduli space $M_{g}^{trop}$ and the logarithmic tropicalization of $\calT_{g}^{log}$ is equivalent to $\calT_{g}^{trop}$ so that the natural diagram
\begin{equation}\begin{tikzcd}\label{eq_logtropdiag}
&\calT_{g}^{log} \arrow[ld,"\trop_{g}"']\arrow[rrd,"\trop_{\calT_{g}^{log}}"]\arrow[dd]&\\
\calT_{g}^{trop} \arrow[rrr,"\simeq", crossing over]\arrow[dd] & & & \calC_{\calT_{g}^{log}}\arrow[dd]\\
& \calM_{g}^{log}\arrow[ld,"\trop_{g}"']\arrow[rrd,"\trop_{\calM_{g}^{log}}"] &\\
\calM_{g}^{trop} \arrow[rrr]& & & \calC_{M_{g}^{log}}
\end{tikzcd}\end{equation}
commutes.
\end{theorem}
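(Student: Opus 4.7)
The statement for $\calM_{g}^{log}$ is essentially a restatement of \cite[Theorem 1.3]{Ulirsch_nonArchArtin}, so the plan is to concentrate on the assertion about $\calT_{g}^{log}$. First I would observe that $\calT_{g}^{log}$ is logarithmically smooth over $\Z$: by construction $\calT_{g}^{log}\to\calT_{g}^{trop}$ is the base change of the strict, smooth, surjective morphism $\trop_{g}^{log}\colon \calM_{g}^{log}\to\calM_{g}^{trop}$, and $\calT_{g}^{trop}$ is log smooth over $\Z$ as the Artin fan associated to a cone complex. In particular $\calT_{g}^{log}\to\calT_{g}^{trop}$ is itself strict. Since $\calT_{g}^{trop}$ is representable by the cone complex $T_{g}^{trop}$ by Theorem \ref{thm_tropTeichmueller}, it has faithful monodromy, so Proposition \ref{prop_factorization} produces a canonical comparison morphism $\calC_{\calT_{g}^{log}}\to\calT_{g}^{trop}$.

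To prove that this morphism is an equivalence, I would identify the combinatorial cone stack $\Pi_{1}(\calT_{g}^{log})$ with the category $\widetilde{J}_{g}$ used in the proof of Theorem \ref{thm_tropTeichmueller}. A geometric point of $\calT_{g}^{log}$ is a stable curve together with a topological equivalence class of Teichm\"uller marking on its dual tropical curve, i.e.\ precisely an object $(G,h,[\phi])$ of $\widetilde{J}_{g}$; by the local structure of stable curves (Theorem \ref{thm_logcurves}) the associated cone is $\sigma_{x}=\Hom(\Mbar_{\calT_{g}^{log},x},\R_{\geq 0})=\R_{\geq 0}^{E(G)}$. By the moduli description of $\calT_{g}^{log}$ given in Section \ref{section_logTeichmueller}, \'etale specializations localized along the strict ones correspond to weighted edge contractions of dual graphs compatible with markings, where the compatibility is exactly Proposition \ref{prop_weightededgecontractions}. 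The observation in the proof of Theorem \ref{thm_tropTeichmueller} that the automorphism group of any $(G,h,[\phi])$ in $\widetilde{J}_{g}$ is trivial then implies that $\Pi_{1}(\calT_{g}^{log})$ already has faithful monodromy and is equivalent to $\widetilde{J}_{g}$. Its coarse moduli space as a combinatorial cone stack is therefore $\calT_{g}^{trop}$, as desired.

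The commutativity of \eqref{eq_logtropdiag} would follow by combining the two triangles with the fiber product presentation $\calT_{g}^{log}=\calM_{g}^{log}\times_{\calM_{g}^{trop}}\calT_{g}^{trop}$: the front and back squares commute by construction, and the remaining faces expressing the comparison maps to the log tropicalizations then commute thanks to the uniqueness half of the universal property in Proposition \ref{prop_factorization}.

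I expect the main obstacle to be the careful verification that $\Pi_{1}(\calT_{g}^{log})$ is genuinely equivalent to $\widetilde{J}_{g}$. One has to check that the stacky automorphisms of $\calT_{g}^{log}$ coming from graph automorphisms preserving both $h$ and $[\phi]$ are indeed killed after localizing along strict specializations, and that no spurious automorphisms appear from the interaction of the Teichm\"uller marking with the stack structure of $\calMbar_{g}$. The essential combinatorial input making this work is that, once a topological equivalence class of marking is fixed, no nontrivial graph automorphism preserves it.
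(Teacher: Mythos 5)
Your proposal is correct and follows essentially the same route as the paper: the paper also reduces the $\calM_g^{log}$ statement to the three ACP facts (equivalently, to \cite[Theorem 1.3]{Ulirsch_nonArchArtin}), then identifies the strata, specializations, and (trivial) monodromy/automorphism groups of $\calT_g^{log}$ with the data of $\widetilde{J}_g$ via the fiber-product description, and deduces commutativity of the remaining faces from strictness of $\trop_g$, the universal property of Proposition \ref{prop_factorization}, and surjectivity. The "main obstacle" you flag is exactly the paper's observation, already made in the proof of Theorem \ref{thm_tropTeichmueller}, that a graph automorphism acting trivially on the fundamental groupoid is trivial, so fixing $[\phi]$ kills all automorphisms and monodromy.
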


\begin{proof}
The first step of the proof consists of a stack-theoretic and logarithmic reinterpretation of the proof of the main result in \cite{ACP}. It is based on the following facts, which be found e.g. in \cite{ACGII} and \cite{ACP}:
\begin{itemize}
\item The boundary strata of $\calMbar_{g}$ are in natural one-to-one correspondence with stable weighted graphs of genus $g$.
\item An \'etale specialization $\eta_{G}\leadsto \eta_{G'}$ of generic points of boundary strata that is not an isomorphism corresponds to a weighted edge contraction $G\rightarrow G'$.
\item There is an isomorphism $\Mbar_{\eta_G}\simeq\N^{E(G)}$ such that the group of automorphisms of $\Mbar_{\eta_G}$ induced by self-specializations $\eta_G\xrightarrow{\sim}\eta_G$ (called the \emph{monodromy group} of the stratum in \cite{ACP}) agrees with the group of permutations of $\N^{E(G)}$ induced by automorphisms of $G$. 
\end{itemize}
These three facts together imply that there is a natural equivalence between the coarse moduli space of $\calM_{g}^{trop}$ and $\calC_{\calM_g^{log}}$. Since, by \cite[Theorem 4]{CCUW} the tropicalization map $\trop_g$ is strict, the lower triangular diagram in \eqref{eq_logtropdiag} commutes by the universal property from Proposition \ref{prop_factorization}. 

In the second step we notice that by construction of $\calT_g^{log}$ as a fiber product $\calM_{g}^{log}\times_{\calM_{g}^{trop}}\calT_{g}^{trop}$, we also have the following facts:
\begin{itemize}
\item The logarithmic strata of $\calT_g^{log}$ are in a natural one-to-one correspondence with pairs consisting of a stable weighted graph of genus $g$ and a topological equivalence class of Teichm\"uller markings;
\item An \'etale specialization $\eta_{G,[\phi]}\leadsto \eta_{G',[\phi']}$ of generic points of boundary strata that is not an isomorphism corresponds to a weighted edge contraction $G\rightarrow G'$ that makes the resulting diagram of Teichm\"uller markings commute up to inner automorphisms.
\item Both the monodromy group and the automorphism group of $(G,[\phi])$ are trivial.
\end{itemize}
Thus there is a natural equivalence between $\calT_{g}^{trop}$ and the $\calC_{\calT_g^{log}}$. Since the tropicalization map $\trop_g$ is strict, as a base change of a strict tropicalization map, the upper triangular diagram is commutative by the universal property in Proposition \ref{prop_factorization}. 

Finally, we observe that the two back squares in \eqref{eq_logtropdiag} commute by construction and thus the front square commutes, since $\trop_g$ is surjective.
\end{proof}



\section{Skeletons and tropicalization}

Throughout this section let $K$ be an algebraically closed non-Archimedean field with valuation ring $R$. In this section we construct $\calSbar_g$ by applying Raynaud's generic fiber functor to $\calS_g^{log}$ and identify its non-Archimedean tropicalization map with a strong deformation retraction onto the skeleton.


\subsection{Extended (generalized) cone complexes}
Given a rational polyhedral cone $\sigma$, its \emph{canonical extension} is defined to be 
\begin{equation*}
\sigmabar=\Hom(S_\sigma,\R_{\geq 0}) \ .
\end{equation*}
One may think of $\sigmabar$ as a compactification of $\sigma$ given by adding further faces at infinity (see e.g. \cite[Section 5]{Rabinoff_Newtonpolygons} for details).  As in \cite{ACP,Ulirsch_functroplogsch}, we define the \emph{canonical extension} $\Sigmabar$ of a (generalized) cone complex $\Sigma$ as the colimit of the diagram that arises when we replace all cones in the defining diagram by their canonical extensions. 

In the following we write $M_{g}^{trop}$ and $T_g^{trop}$ for the coarse moduli spaces of $\calM_g^{trop}$ and $\calT_g^{trop}$ in the sense of Section \ref{section_coarsemodulispaces} respectively. Both $M_g^{trop}$ and $T_g^{trop}$ are objects in the $2$-category of cone stacks with faithful monodromy and in the category of generalized cone complexes. Denote by $\Mbar_g^{trop}$ and $\Tbar_g^{trop}$ their canonical extensions. The points of $M_g^{trop}$ are in natural one-to-one correspondence with stable tropical curves of genus $g$ with real edge lengths and the points of $T_g^{trop}$ are pairs consisting of a stable tropical curve of genus $g$ a topological equivalence class of Teichm\"uller markings. Their canonical extensions parametrize the same data, only we allow the edge lengths of the tropical curves to take non-zero values in the additive monoid $\Rbar_{\geq 0}=\R_{\geq 0}\sqcup\{\infty\}$ (see \cite[Section 4]{ACP} for details). \emph{Outer space} $\CV_g^{trop}$ in the sense of Culler-Vogtmann \cite{CullerVogtmann} as the locus of metric graphs in $T_g^{trop}$, i.e. as the locus of tropical curves with all vertex weights equal to zero. 

\subsection{Non-Archimedean tropicalization of $\calMbar_{g}$}
Denote by $\calMbar_{g}^{an}$ the non-Archimedean analytic stack associated to $\calMbar_{g,K}$. We refer the reader to \cite{Yu_Gromovcompactness, Ulirsch_tropisquot} for the basic definitions of non-Archimedean analytic stacks and implicitly identify $\calMbar_{g}^{an}$ with its underlying topological space, as introduced in \cite[Section 3]{Ulirsch_tropisquot}. 

There is a natural \emph{non-Archimedean tropicalization map}
\begin{equation*}
\trop_g\colon\calMbar_g^{an}\longrightarrow \Mbar_g^{an}
\end{equation*}
that associates to a point in $\calMbar_g^{an}$, corresponding to a stable curve $X$ over a non-Archimedean extension $L$ of $K$, its \emph{dual tropical curve $\Gamma_X$}. Let us explain this:

The valuative criterion for properness, applied to $\calMbar_g$ tells us that there is a finite extension $L'$ of $L$ such that the base change $X_{L'}$ admits a stable model $\calX$ over the valuation ring $R'$ of $L'$. In other words, there is a proper and flat scheme $\calX$ over $R'$ with reduced fibers of dimension one such that the generic fiber is isomorphic to $X_{L'}$ and the special fiber $\calX_0$ is a stable nodal curve over the residue field of $R'$. The \emph{dual graph} $G_{\calX_0}$ of $\calX_0$ is the graph with vertices are the irreducible components of $\calX_0$ and an edge between two vertices for every node connecting the corresponding components. A vertex weight $h\colon V(G_{\calX_0})\rightarrow\Z_{\geq 0}$ on $G_{\calX_0}$ associates to a vertex the genus of the normalization of the corresponding component. \'Etale locally around every node, the scheme $\calX$ is given by $xy=r_e$ for two coordinates $x$ and $y$ and an element $r\in R'$. The edge length on $G_{\calX_0}$ is given by $\vert e\vert=\val(r_e)$. Notice hereby that for the edges $e$ corresponding to nodes that were already present in the generic fiber, we always have $\vert e\vert =0$. The \emph{dual tropical curve} $\Gamma_X$ is the tropical curve given by the tuple $(G_{\calX_0}, h, \vert.\vert)$ (with edge lengths in $\Rbar_{\geq 0}$). 

It follows a posteriori from the identification of this map with the strong deformation retraction onto the non-Archimedean skeleton in Theorem \ref{thm_skel=tropdetail} below that $\trop_g$ is well-defined and continuous. 

\subsection{Raynaud's generic fiber functor} Berkovich analytification defines a functor from the category of schemes locally of finite type over $K$ to the category non-Archimedean analytic spaces. By \cite{Berkovich_book} a scheme $X$ locally of finite type over $K$ is separated if and only $X^{an}$ is a Hausdorff space. Since we are considering the non-separated stack $\calS_g^{log}$, we therefore want to work with a different analytification functor, known as \emph{Raynaud's generic fiber functor} (as introduced in \cite{Berkovich_vanishingcyclesII}). 

It associates to a flat scheme $\mathscr{X}$ locally of finite type over $R$ a Berkovich analytic space $\mathscr{X}_\eta$ that functions as an analytic generic fiber of the formal completion of $\mathscr{X}$ along the maximal ideal of $R$. Suppose that $\mathscr{X}=\Spec \mathscr{A}$ is affine and write $X=\Spec A$ for its generic fiber, where $A=\mathscr{A}\otimes_R K$. In this case, the Raynaud generic fiber is the affinoid domain in $X^{an}$ whose points are those seminorms $\vert .\vert_x$ on $A$ for which $\vert a\vert_x\leq 1$ for all $a\in \mathscr{A}$, i.e. those seminorms that extend to a bounded seminorm on $\mathscr{A}$. 

In general, when $\mathscr{X}$ is not affine, the Raynaud generic fiber $\mathscr{X}_\eta$ is given by glueing the affine patches of $\mathscr{X}$. A point in $\mathscr{X}_\eta$ may be represented by a morphism $\Spec R'\rightarrow \mathscr{X}$ from a valuation ring $R'$ that extends $R$. Two morphisms $\Spec R'\rightarrow\mathscr{X}$ and $\Spec R''\rightarrow \mathscr{X}$ represent the same point if and only if there is a valuation ring $\Omega$ that extends both $R'$ and $R''$ such that the induced diagram
\begin{equation*}\begin{tikzcd}
\Spec \Omega\arrow[r]\arrow[d]&  \Spec R'\arrow[d]\\
\Spec R''\arrow[r]& \mathscr{X}
\end{tikzcd}\end{equation*}
commutes. By the valuative criteria, the Raynaud generic fiber $\mathscr{X}_\eta$ is an analytic domain in the Berkovich analytic space $\mathscr{X}_K^{an}$ associated to the algebraic geometric generic fiber $\mathscr{X}_K$ of $\mathscr{X}$ if and only if $\mathscr{X}$ is separated over $R$, and $\mathscr{X}_\eta$ is isomorphic to $\mathscr{X}_K^{an}$ if and only if $\mathscr{X}$ is proper over $R$.

In \cite{Yu_Gromovcompactness}, Yu has extend the generic fiber functor $(.)_\eta$ to a functor
\begin{equation*}
(.)_\eta\colon \mathbf{Alg.Stacks}_{\textrm{flat, loc.f.t.}/k}\longrightarrow \mathbf{An.Stacks}_k
\end{equation*}
such that, whenever $\mathscr{X}=\big[\mathscr{U}\big/\mathscr{R}\big]$ is a groupoid presentation of an algebraic stack $\mathscr{X}$ that is flat and locally of finite type over $R$, we have 
\begin{equation*}
\mathscr{X}_\eta \simeq\big[\mathscr{U}_\eta/\mathscr{R}_\eta\big] \ . 
\end{equation*}
Again a point in $\mathscr{X}_\eta$ may be represented by a morphism $\Spec R'\rightarrow \mathscr{X}$ from a valuation ring $R'$ that extends $R$. Two morphisms $\Spec R'\rightarrow\mathscr{X}$ and $\Spec R''\rightarrow \mathscr{X}$ then represent the same point if and only if there is a valuation ring $\Omega$ that extends both $R'$ and $R''$ such that the induced diagram
\begin{equation*}\begin{tikzcd}
\Spec \Omega\arrow[r]\arrow[d]&  \Spec R'\arrow[d]\\
\Spec R''\arrow[r]& \mathscr{X}
\end{tikzcd}\end{equation*}
is $2$-commutative. Yet again, by the valuative criteria, the Raynaud generic fiber $\mathscr{X}_\eta$ is an analytic domain in the Berkovich analytic stack $\mathscr{X}_K^{an}$ if and only if $\mathscr{X}$ is separated over $R$, and $\mathscr{X}_\eta$ is equivalent to $\mathscr{X}_K^{an}$ if and only if $\mathscr{X}$ is proper over $R$.

\subsection{Non-Archimedean Schottky space and its tropicalization}

Recall from Section \ref{section_logTeichmueller} that $\underline{\calT}_{g}^{log}$ is the underlying algebraic stack of $\calT_g^{log}=\calM_g^{log}\times_{\calM_g^{trop}}\calT_g^{trop}$. It is a smooth and universally closed, but not separated Deligne-Mumford stack over $\Spec\Z$. We write $\underline{\calT}_{g,R}^{log}$ for the base change of $\underline{\calT}_{g}^{log}$ to the valuation ring $R$. 

\begin{definition}
The \emph{extended non-Archimedean Teichm\"uller space} $\calTbar_{g}$ over $K$ is defined to be
\begin{equation*}
\calTbar_{g}=\underline{\calT}_{g,R,\eta}^{log} \ .
\end{equation*}
\end{definition}

A point in $\calTbar_{g}$ is represented by a tuple $(X,\phi)$ where $X$ is a stable curve of genus $g$ over a non-Archimedean extension $L$ of $K$ together with a Teichm\"uller marking of the dual tropical curve $\Gamma_X$ of its stable reduction. It is well-known that the geometric realization $\vert \Gamma_X\vert$ of $\Gamma_X$ as a metric space arises as the \emph{minimal skeleton} of the non-Archimedean analytic space $X_L^{an}$ (see \cite{Berkovich_book, BPR} for details).
A Teichm\"uller marking on $\Gamma_X$ amounts to the choice of an equivalence $\pi_1(X^{an})=\pi_1\big(\vert\Gamma_X\vert\big)\xrightarrow{\sim} F_{b_1}$ where $b_1$ is the Betti number of both $X^{an}$ and its minimal skeleton $\vert \Gamma_X\vert$. So, in particular, we have a natural \emph{non-Archimedean tropicalization map} 
\begin{equation*}\begin{split}
\trop_{g}\colon \calTbar_{g}&\longrightarrow \Tbar_{g}^{an}\\
\Big(X,\phi\colon \pi_1\big(X^{an}\big)\xrightarrow{\sim} F_{b_1}\Big)&\longmapsto \Big(\Gamma_X,\phi\colon \pi_1\big(\vert\Gamma_X\vert\big)\xrightarrow{\sim}F_{b_1}\Big) \ .
\end{split}\end{equation*}

We define $\calT_{g}$ as the locus of pairs $(X,\phi)$ where $X$ is a smooth curve and refer to it as the \emph{non-Archimedean Teichm\"uller space} over $K$. It also arises as the preimage $\trop_{g}^{-1}(T_g^{trop})$ of the locus of non-extended tropical curves (with edge lengths only in $\R_{>0}$ instead of $\Rbar_{>0}=\R_{>0}\sqcup\{\infty\}$) and as the preimage of the analytification $\calM_{g}^{an}$ of $\calM_{g}$ under the natural map $\calTbar_{g}\rightarrow\calMbar_{g}^{an}$ that forgets the marking. 

\begin{proof}[Proof of Theorem \ref{thm_calTbar_g}]
Since $\underline{\calT}_g^{log}$ is smooth over $\Z$, its base change to $R$ is smooth over $R$ and therefore the Raynaud generic fiber is $G$-smooth over $K$. Let $G$ be stable weighted graph of genus $g$ and consider the rational polyhedral cone $\sigma_G=\R_{\geq 0}^{E(G)}$, which parametrizes all tropical curves whose underlying graph is a weighted edge contraction of $G$. Write $U_{\sigma_G}$ for the affine toric variety $\Spec \Z[S_{\sigma_G}]$ associated to $\sigma_G$. There is a natural morphism $\sigma_G\rightarrow\calM_g^{trop}$ that induces a morphism $U_\sigma\rightarrow \calM_g^{trop}$ (here the right hand side is the really the Artin fan $\calA_{\calM_g^{trop}}$). The base change 
\begin{equation*}
\widetilde{U}_{\sigma_G}=U_{\sigma_G}\times_{\calM_g^{trop}}\calT_g^{trop}
\end{equation*}
is a non-separated toric variety. Each of its maximal torus-invariant open affine subsets is isomorphic to $U_{\sigma_G}$, since $\calT_g^{trop}\rightarrow{\calM_g^{trop}}$ is a strict cover. Thus, applying the Raynaud generic fiber functor, we obtain a morphism $\widetilde{U}_{\sigma_G,R,\eta}\rightarrow U_{\sigma_G,R,\eta}$ without boundary. Since $\calTbar_g\rightarrow\calMbar_g^{an}$ arises \'etale locally as a base change of such morphisms, it also without boundary. This proves that $\calTbar_g$ is without boundary over $K$, since $\calMbar_g^{an}$ is. Since $\calTbar_g$ is $G$-smooth and without boundary, it is also smooth. 

In order to show that $\calTbar_g$ is separated we show that the diagonal morphism $\Delta\colon \calTbar_g\rightarrow\calTbar_g\times\calTbar_g$ is proper. Let $U\rightarrow \calTbar_g\times\calTbar_g$ be a morphism from a strict analytic space $U$. This corresponds to a flat and proper analytic family $X\rightarrow U$ of stable curves together with a family of Teichm\"uller markings of the dual tropical curves, compatible with \'etale specialization. The fiber product $\calTbar_g\times_{\calTbar_g\times\calTbar_g}U$ is representable by an analytic space $V$ whose points are exactly the triples $(x,y,\phi)$ consisting of two points $x,y\in U$ and an isomorphism $\phi\colon X_x\xrightarrow{\sim}X_y$ that is compatible with the Teichm\"uller markings on the dual tropical curves. Since the diagonal morphism $\calMbar_g^{an}\rightarrow\calMbar_g^{an}\times\calMbar_g^{an}$ is proper, the base change of $V\rightarrow U$ to any affinoid domain in $U$ is finite and this implies that $V\rightarrow U$ is without boundary. Any compact subset $A$ in $U$ will be a subset of a finite union of affinoid domains and thus this also implies that the preimage of $A$ in $V$ is compact. Therefore $V\rightarrow U$ is a proper morphism and, since $U$ was chosen arbitrarily, this implies that the diagonal morphism $\Delta\colon \calTbar_g\rightarrow\calTbar_g\times\calTbar_g$ is proper. 
\end{proof}

\begin{proof}[Proof of Theorem \ref{thm_quotient}]
This immediately follows from Theorem \ref{thm_logquot} and the fact that both the functors $(.)\otimes \Spec R$ and $(.)_\eta$ preserve ($2$-)colimits.
\end{proof}


\subsection{Non-Archimedean skeletons of stacky normal crossing pairs with good reduction}\label{section_stackyskeleton}
Let $(\scrX,\scrD)$ be a \emph{strictly semistable pair} over $R$, consisting a flat scheme $\scrX$ locally of finite type over $\Spec R$ whose generic fiber is smooth and whose special fiber has strict normal crossings and an effective strict normal crossing divisor $\scrD=\calD_1+\cdots +\calD_s$ on $\scrX$ which includes the special fiber. In \cite[Section 4]{GRW} the authors expands on a construction of Berkovich in \cite{Berkovich_smooth=locallycontractible} and show that the generic fiber $\scrX_\eta$ admits a strong deformation retraction $\rho_{(\scrX,\scrD)}\colon \scrX_\eta\rightarrow\scrX_\eta$ onto a closed subset $\Sigmabar(\scrX,\scrD)$ of $\scrX_\eta$, the \emph{skeleton of the strictly semistable pair $(\scrX,\scrD)$}.

\begin{remark}
In \cite{GRW} the authors always require that $\scrX$ be proper over $R$. This assumption is not necessary, if we work with the Raynaud generic fiber $\scrX_\eta$ instead of the Berkovich analytic space of the algebraic generic fiber of $\scrX$. 
\end{remark}

For the purpose of this article we assume that $\scrX$ is smooth over $R$, i.e. that $\scrX$ has \emph{good reduction}. In this case, the skeleton $\Sigmabar(\scrX,\scrD)$ canonically has the structure of an extended cone complex: The space $\calX$ is naturally stratified by locally closed subschemes; the strata are the connected components of of the smooth locus of $\calD_{i_1}\cap\cdots\cap\calD_{i_k}$. If $\scrU\subseteq\scrX$ is \emph{small} open subset of $\scrX$ (or \emph{a building block} in the terminology of \cite[Section 4.3]{GRW}), i.e. if it contains a unique closed stratum, the skeleton is naturally homeomorphic to the extended cone $\sigmabar_\scrX=\Hom\big(\Mbar_{\scrD},\Rbar_{\geq 0}\big)$ and the retraction $\rho_{(\scrX,\scrD)}$ is given by 
\begin{equation}\label{eq_retraction}
x\longmapsto \big(\Mbar_{\scrD}\rightarrow\calO_{\mathscr{U}}/\calO_{\mathscr{U}}^{\ast}\xrightarrow{-\log\vert.\vert_x}\Rbar_{\geq 0}\big)\ .
\end{equation}
Here $M_\scrD$ denotes the natural divisorial logarithmic structure on $\scrX$ associated to the divisor $\scrD$, i.e. the sheaf of sections of $s\in\calO_X$ such that $s\vert_{\scrU-\scrD}\in\calO_\calX^\ast$; this way $\Mbar_\scrD=M_\scrD/\calO_\scrX^\ast$ is identified with the sheaf of effective Cartier divisors with support in $\scrD$. In general, for every strictly semistable pair $(\scrX,\scrD)$ there is a cover by small open subsets $\scrU_i$ as well as a cover $\scrV_k^{ij}$ of $\scrU_i\cap \scrU_j$ by small open subsets $\scrV_k^{ij}$. Then the  extended cone complex $\Sigmabar(\scrX,\scrD)$ is the colimit of the induced diagram of proper face morphisms $\sigmabar_{V_k^{ij}}\hookrightarrow \sigmabar_{U_i}$ and the retraction map is defined by \eqref{eq_retraction} on every small open subset $\scrU\subseteq \scrX$. 

In the following Lemma \ref{lemma_skeletonncpairs}, we generalize this construction to the case of a \emph{stacky normal crossing pair $(\scrX,\scrD)$ with good reduction}, to the case of a Deligne-Mumford stack $\scrX$ that is smooth over $\Spec R$ together with   an effective Cartier divisor $\scrD$ on $\scrX$ that has (stack-theoertically) normal crossings. We say that an \'etale morphism $f\colon \scrU\rightarrow \scrX$ is \emph{small}, if $(\scrU,f^\ast\scrD)$ is a strictly semistable pair with good reduction that is small. 

\begin{lemma}\label{lemma_skeletonncpairs}
Let $\mathscr{X}$ be a Deligne-Mumford stack that is smooth over $R$ and let $\mathscr{D}$ be an effective Cartier divisor on $\mathscr{X}$ with (stack-theoretically) normal crossings. Then there is a strong deformation retraction $\rho_{(\mathscr{X},\mathscr{D})}\colon \mathscr{X}_\eta\rightarrow \mathscr{X}_\eta$ onto a closed subset $\Sigmabar(\mathscr{X},\mathscr{D})$, the non-Archimedean skeleton associated to $(\mathscr{X},\mathscr{D})$. The skeleton $\Sigmabar(\mathscr{X},\mathscr{D})$ is naturally homeomorphic to the extended generalized cone complex associated to $\calC(\mathscr{X},\mathscr{D})$ and, on a small \'etale neighborhood $\mathscr{U}\rightarrow \mathscr{X}$, the retraction map $\rho_{(\mathscr{X},\mathscr{D})}$ is given by the non-Archimedean tropicalization map 
\begin{equation*}\begin{split}
\mathscr{U}_\eta&\longrightarrow \sigmabar_{\mathscr{U}}=\Hom(\Mbar_{\mathscr{U}},\Rbar_{\geq 0})\\
x&\longmapsto \big(\Mbar_{\mathscr{U}}\rightarrow\calO_{\mathscr{U}}/\calO_{\mathscr{U}}^{\ast}\xrightarrow{-\log\vert.\vert_x}\Rbar_{\geq 0}\big)\ . 
\end{split}\end{equation*}
\end{lemma}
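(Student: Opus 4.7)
The plan is to reduce to the strictly semistable case of \cite{GRW} via \'etale descent, exploiting the fact that the retraction formula is given intrinsically in terms of the characteristic monoid and hence is functorial under strict \'etale morphisms. First, choose an \'etale cover $\{f_i\colon \mathscr{U}_i\rightarrow\mathscr{X}\}$ by small \'etale neighborhoods, so that each pair $(\mathscr{U}_i,f_i^\ast\mathscr{D})$ is a strictly semistable pair with good reduction in the sense of \cite{GRW} (such a cover exists because $\mathscr{X}$ is Deligne--Mumford and smooth over $R$ and $\mathscr{D}$ has stack-theoretic normal crossings). On each $\mathscr{U}_i$ the construction recalled above produces an extended cone $\sigmabar_{\mathscr{U}_i}=\Hom(\Mbar_{\mathscr{U}_i},\Rbar_{\geq 0})$, a closed embedding $\sigmabar_{\mathscr{U}_i}\hookrightarrow \mathscr{U}_{i,\eta}$ onto the skeleton, and a strong deformation retraction $\rho_i\colon \mathscr{U}_{i,\eta}\rightarrow \mathscr{U}_{i,\eta}$ whose image is the skeleton and which is given by the explicit tropicalization formula \eqref{eq_retraction}.

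Next I would verify compatibility. For any small \'etale map $\mathscr{V}\rightarrow \mathscr{U}_i$ (in particular, for the double intersections $\mathscr{U}_i\times_{\mathscr{X}}\mathscr{U}_j$ covered by small opens $\mathscr{V}_k^{ij}$), the induced map $\Mbar_{\mathscr{V}}\rightarrow f^\ast\Mbar_{\mathscr{U}_i}$ is an isomorphism on stalks, so the tropicalization formula $x\mapsto (m\mapsto -\log\vert\alpha(m)\vert_x)$ commutes strictly with the restriction map $\mathscr{V}_\eta\rightarrow \mathscr{U}_{i,\eta}$. Consequently, the retractions $\rho_i$ agree on overlaps, and the associated skeleta glue via proper face morphisms $\sigmabar_{\mathscr{V}_k^{ij}}\hookrightarrow\sigmabar_{\mathscr{U}_i}$ exactly to the diagram computing the extended generalized cone complex $\overline{\calC}(\mathscr{X},\mathscr{D})$ from Section \ref{section_coarsemodulispaces} applied to $\Pi_1(\mathscr{X},\mathscr{D})$. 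Since $\mathscr{X}_\eta$ is by definition the quotient of $\bigsqcup \mathscr{U}_{i,\eta}$ by the \'etale equivalence relation induced by the $\mathscr{V}_k^{ij}$, one obtains a well-defined continuous map $\rho_{(\mathscr{X},\mathscr{D})}\colon \mathscr{X}_\eta\rightarrow \mathscr{X}_\eta$ whose image is a closed subset canonically homeomorphic to $\overline{\Sigma}(\mathscr{X},\mathscr{D})$.

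Finally, to upgrade $\rho_{(\mathscr{X},\mathscr{D})}$ to a strong deformation retraction, I would note that the homotopies $H_i\colon [0,1]\times \mathscr{U}_{i,\eta}\rightarrow \mathscr{U}_{i,\eta}$ produced by the GRW construction are themselves defined by a characteristic-monoid formula (scaling the coordinate in the dual cone), hence they too are compatible under strict \'etale pullback and glue to a global homotopy $H\colon [0,1]\times \mathscr{X}_\eta\rightarrow \mathscr{X}_\eta$ from $\id$ to $\rho_{(\mathscr{X},\mathscr{D})}$ that fixes $\overline{\Sigma}(\mathscr{X},\mathscr{D})$ pointwise.

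The main obstacle is the gluing step: one needs to know that the \'etale local pieces assemble correctly despite the non-trivial automorphism groups in the Deligne--Mumford setting and the possible non-separatedness of the \'etale presentation. This is handled by the observation, already used implicitly in Section \ref{section_Artinfans}, that both the retraction formula and the target cone only depend on the characteristic monoid, which is a strict \'etale invariant. Consequently, any automorphism of $\mathscr{U}_i$ over $\mathscr{X}$ acts on $\sigmabar_{\mathscr{U}_i}$ through the induced action on $\Mbar_{\mathscr{U}_i}$ and commutes with $\rho_i$, so the whole construction descends to $\mathscr{X}_\eta$, and the identification of the image with $\overline{\Sigma}(\mathscr{X},\mathscr{D})$ follows from the description of the latter as the colimit over $\Pi_1(\mathscr{X},\mathscr{D})$.
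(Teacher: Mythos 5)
Your proposal is correct and follows essentially the same route as the paper's proof: reduce to the GRW construction on small \'etale neighborhoods, use that the retraction formula depends only on the characteristic monoid (hence is strictly functorial under small \'etale morphisms) to descend to $\mathscr{X}_\eta$, identify the image with the colimit of the extended cones, and glue the homotopies. The only cosmetic difference is that the paper organizes the gluing as a colimit over the category $Q(\mathscr{X},\mathscr{D})$ of all small \'etale neighborhoods rather than a fixed cover with double intersections, and it defers the compatibility of the homotopies to the arguments of Thuillier and ACP (where the homotopy is defined via the torus operation) rather than asserting a direct characteristic-monoid formula for it.
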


\begin{proof}
Consider the category $Q(\scrX,\scrD)$ whose objects are small \'etale morphisms $f\colon \scrU\rightarrow \scrX$ and whose morphisms are commutative diagrams 
\begin{equation}\label{eq_smalldiagram}\begin{tikzcd}
\scrU \arrow[rd,"f"']\arrow[rr,"h"]&& \scrV\arrow[ld,"g"]\\
& \scrX&
\end{tikzcd}\end{equation}
such that $f^\ast\scrD=(g\circ h)^\ast\scrD$. There is a natural functor $Q(\scrX,\scrD)\rightarrow \mathbf{RPC}^{f}$ that is given by $\scrU\mapsto \sigma_U$. Denote by $M_\scrD$ the divisorial logarithmic structure on $\scrX$ associated to $\scrD$ and note that $(\scrX,M_\scrD)$ is logarithmically smooth over $\Spec R$ (with the trivial logarithmic structure). The generalized cone complex associated to $\calC(\scrX,M_\scrD)$ is precisely the colimit over the diagram of all $\sigma_\scrU$ in $Q(\scrX,\scrD)$, since every geometric point has of $\scrX$ has a small open \'etale neighborhood, in which it is in the deepest stratum.

The underlying topological space of $\scrX_\eta$ is the colimit of all $f_\eta\colon\scrU_\eta\rightarrow\scrX_\eta$ for $f\colon \scrU\rightarrow\scrX$ in $Q(\scrX,\scrD)$. Moverover, for every diagram \eqref{eq_smalldiagram} the induced diagram of retraction maps
\begin{equation*}\begin{tikzcd}
\scrU_\eta \arrow[rr,"\rho_\scrU"] \arrow[d,"h_\eta"']& & \scrU_\eta \arrow[d,"h_\eta"]\\
\scrV_\eta \arrow[rr,"\rho_\scrV"] & & \scrV_\eta
\end{tikzcd}\end{equation*}
is commutative. So the $\rho_\scrU$ descend to a retraction map $\scrX_\eta\rightarrow \scrX_\eta$, whose image is defined to be the skeleton $\Sigmabar(\scrX,\scrD)$ of $\scrX$. It is precisely the colimit of all $\sigmabar_\scrU$, taken over the small \'etale neighborhoods $f\colon \scrU\rightarrow \scrX$ in $Q(\scrX,\scrD)$ and therefore isomorphic to the canonical extension of the generalzed cone complex associated to $\calC(\scrX,M_{\scrD})$. 

In order to show that there is a strong homotopy equivalence between $\rho_\scrX$ and the identity on $\scrX_\eta$, we need to observe that the induced diagram of homotopies on \eqref{eq_smalldiagram} is commutative. This argument has already been carried out in \cite[Section 3.3]{Thuillier_toroidal} and \cite[Proposition 6.1.4]{ACP} over base fields with trivial absolute; it carries over to our situation without any changes.
\end{proof}

\subsection{Skeletons of $\calTbar_g$ and $\calMbar_g^{an}$}

The following Theorem \ref{thm_skel=tropdetail} expands on Theorem \ref{thm_skel=trop} from the introduction and generalizes the main result of \cite{ACP} to the case of a non-trivially valued base field.

\begin{theorem}\label{thm_skel=tropdetail}
The skeleton of $\calMbar_{g}^{an}$ is isomorphic to $\Mbar_{g}^{trop}$ and the skeleton of $\calTbar_{g}$ is isomorphic to $\calTbar_{g}^{trop}$ so that the natural diagram 
\begin{equation}\begin{tikzcd}\label{eq_tropdiagnonArch}
&\calTbar_{g} \arrow[ld,"\trop_{g}"']\arrow[rrd,"\rho_{\calTbar_{g}}"]\arrow[dd]&\\
\calTbar_{g}^{trop} \arrow[rrr,"\simeq", crossing over]\arrow[dd] & & & \Sigmabar({\calTbar_{g}})\arrow[dd]\\
& \calMbar_{g}^{an}\arrow[ld,"\trop_{g}"']\arrow[rrd,"\rho_{\calMbar_{g}}"] &\\
\Mbar_{g}^{trop} \arrow[rrr,"\simeq"]& & & \Sigmabar({\calMbar_{g}})
\end{tikzcd}\end{equation}
commutes.
\end{theorem}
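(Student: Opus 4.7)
My plan is to derive Theorem~\ref{thm_skel=tropdetail} from Lemma~\ref{lemma_skeletonncpairs} and Theorem~\ref{thm_logtrop=modtrop}. First I apply Lemma~\ref{lemma_skeletonncpairs} to the pairs $(\calMbar_{g,R},\partial\calMbar_{g,R})$ and $(\underline{\calT}_{g,R}^{log},\partial\underline{\calT}_{g,R}^{log})$; the required normal crossings hypothesis is classical on $\calMbar_g$ and is provided by Theorem~\ref{thm_logstack=algstack}\,(ii) for $\underline{\calT}_{g}^{log}$. This yields strong deformation retractions $\rho_{\calMbar_g}\colon \calMbar_g^{an}\to\calMbar_g^{an}$ and $\rho_{\calTbar_g}\colon \calTbar_g\to\calTbar_g$ onto skeletons that are canonically homeomorphic to the extended generalized cone complexes associated to $\calC_{\calM_g^{log}}$ and $\calC_{\calT_g^{log}}$. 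By Theorem~\ref{thm_logtrop=modtrop} these cone stacks are equivalent to $M_g^{trop}$ and $\calT_g^{trop}$ respectively, so after passing to canonical extensions I obtain the desired identifications $\Sigmabar(\calMbar_g^{an})\simeq \Mbar_g^{trop}$ and $\Sigmabar(\calTbar_g)\simeq \Tbar_g^{trop}$.

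The heart of the argument, and the step I expect to be the main obstacle, is showing that under these identifications the retraction from Lemma~\ref{lemma_skeletonncpairs} coincides with the non-Archimedean tropicalization map $\trop_g$ that sends a stable curve to its dual tropical curve. I would work \'etale-locally on a small neighborhood $\mathscr{U}\to\calMbar_{g,R}$: a point $x\in\mathscr{U}_\eta$ corresponds to a stable curve $X$ over a non-Archimedean extension $L/K$ with stable model $\mathscr{X}/R'$, and combining Theorem~\ref{thm_logcurves} with Theorem~\ref{thm_Katologsmoothcurves} shows that the generator $\delta_e\in\Mbar_{\mathscr{U}}$ attached to a node of the central fibre evaluates under $x$ to the element $r_e$ of the local equation $xy=r_e$. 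Consequently $-\log|\delta_e|_x=\val(r_e)=|e|$, which is precisely the length of the edge $e$ in $\Gamma_X$. The retraction formula~\eqref{eq_retraction} therefore reproduces the dual tropical curve node by node. The same local picture, pulled back along the strict forgetful morphism $\underline{\calT}_{g,R}^{log}\to\calMbar_{g,R}$, handles $\rho_{\calTbar_g}$, with the Teichm\"uller marking of $\Gamma_X$ being carried along tautologically.

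Once this local identification is established, commutativity of the diagram~\eqref{eq_tropdiagnonArch} is formal and mirrors the proof of the logarithmic diagram~\eqref{eq_logtropdiag}: the two triangles commute by the identification $\rho=\trop_g$ just obtained; the back square is the naturality of the skeleton construction with respect to the strict forgetful morphism $\underline{\calT}_{g,R}^{log}\to\calMbar_{g,R}$; and the front square is forced by the fibre-product description $\calT_g^{log}=\calM_g^{log}\times_{\calM_g^{trop}}\calT_g^{trop}$ together with the surjectivity of $\trop_g$, exactly as in the proof of Theorem~\ref{thm_logtrop=modtrop}. Every other ingredient of the statement --- representability by an analytic stack, the retraction being strong, and the image being closed --- is already packaged in Lemma~\ref{lemma_skeletonncpairs}, so no additional work is needed beyond the local computation sketched above.
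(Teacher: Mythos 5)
Your proposal follows essentially the same route as the paper: the paper's proof is exactly the combination of Lemma \ref{lemma_skeletonncpairs} (applied to $\calMbar_{g}^{an}$ and $\calTbar_{g}$) with Theorem \ref{thm_logtrop=modtrop}, deducing the commutativity of \eqref{eq_tropdiagnonArch} from that of \eqref{eq_logtropdiag}. Your additional local computation identifying $-\log|\delta_e|_x$ with $\val(r_e)$ is a correct unpacking of what the paper leaves implicit in the retraction formula \eqref{eq_retraction}, not a different argument.
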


\begin{proof}[Proof of Theorem \ref{thm_skel=tropdetail}]
Use Theorem \ref{thm_logtrop=modtrop} together with Lemma \ref{lemma_skeletonncpairs} applied to $\calMbar_{g}^{an}$ and $\calTbar_{g}$. The commutativity of  diagram \eqref{eq_tropdiagnonArch} follows form the commutativity of diagram \eqref{eq_logtropdiag} in Theorem \ref{thm_logtrop=modtrop}.
\end{proof}

\begin{proof}[Proof of Corollary \ref{cor_skel=trop}]
For a stacky normal crossing pair $(\calX,\calD)$, the homotopy between $\rho_{(\calX,\calD)}$ and identity preserves the fibers of $\rho_{(\calX,\calD)}$; this may be checked \'etale locally only toric varieties, on which the homotopy is defined via the torus operation. Using Theorem \ref{thm_skel=tropdetail}, we may therefore restrict the homotopy to the locus of smooth Mumford curves in $\calTbar_g$ and find that the tropicalization defines a strong deformation retraction onto Culler-Vogtmann Outer space $\CV_g$.
\end{proof}

\begin{remark}
Denote by $\CVbar_g$ the canonical extension of Culler-Vogtmann Outer space, where we allow the edge lengths of graphs to attain value $\infty$ (i.e. the closure of $\CV_g$ in $\Tbar_{g}^{trop}$). The same argument as in the proof of Corollary \ref{cor_skel=trop} shows that $\CVbar_g$ is a strong deformation retract of the locus $\calTbar^{\textrm{Mum}}_g$ of stable Mumford curves in $\calTbar_g$.
\end{remark}









\bibliographystyle{amsalpha}
\bibliography{biblio}{}

\newcommand{\etalchar}[1]{$^{#1}$}
\providecommand{\bysame}{\leavevmode\hbox to3em{\hrulefill}\thinspace}
\providecommand{\MR}{\relax\ifhmode\unskip\space\fi MR }
\providecommand{\MRhref}[2]{%
  \href{http://www.ams.org/mathscinet-getitem?mr=#1}{#2}
}
\providecommand{\href}[2]{#2}
\begin{thebibliography}{RSPW19b}

\bibitem[Abi77]{Abikoff_degRS}
William Abikoff, \emph{Degenerating families of {R}iemann surfaces}, Ann. of
  Math. (2) \textbf{105} (1977), no.~1, 29--44.

\bibitem[ACG11]{ACGII}
Enrico Arbarello, Maurizio Cornalba, and Phillip~A. Griffiths, \emph{Geometry
  of algebraic curves. {V}olume {II}}, Grundlehren der Mathematischen
  Wissenschaften [Fundamental Principles of Mathematical Sciences], vol. 268,
  Springer, Heidelberg, 2011, With a contribution by Joseph Daniel Harris.

\bibitem[ACG{\etalchar{+}}13]{Abramovichetal_logmoduli}
Dan Abramovich, Qile Chen, Danny Gillam, Yuhao Huang, Martin Olsson, Matthew
  Satriano, and Shenghao Sun, \emph{Logarithmic geometry and moduli}, Handbook
  of moduli. {V}ol. {I}, Adv. Lect. Math. (ALM), vol.~24, Int. Press,
  Somerville, MA, 2013, pp.~1--61.

\bibitem[ACGS17]{ACGS}
Dan Abramovich, Qile Chen, Mark Gross, and Bernd Siebert, \emph{Decomposition
  of degenerate {Gromov}-{Witten} invariants}, arXiv:1709.09864 [math] (2017).

\bibitem[ACM{\etalchar{+}}16]{ACMUW}
Dan Abramovich, Qile Chen, Steffen Marcus, Martin Ulirsch, and Jonathan Wise,
  \emph{Skeletons and fans of logarithmic structures}, Nonarchimedean and
  tropical geometry, Simons Symp., Springer, [Cham], 2016, pp.~287--336.

\bibitem[ACMW17]{ACMW}
Dan Abramovich, Qile Chen, Steffen Marcus, and Jonathan Wise, \emph{Boundedness
  of the space of stable logarithmic maps}, J. Eur. Math. Soc. (JEMS)
  \textbf{19} (2017), no.~9, 2783--2809.

\bibitem[ACP15]{ACP}
Dan Abramovich, Lucia Caporaso, and Sam Payne, \emph{The tropicalization of the
  moduli space of curves}, Ann. Sci. \'{E}c. Norm. Sup\'{e}r. (4) \textbf{48}
  (2015), no.~4, 765--809.

\bibitem[And03]{Andre_periods}
Yves Andr\'{e}, \emph{Period mappings and differential equations. {F}rom {$\Bbb
  C$} to {$\Bbb C_p$}}, MSJ Memoirs, vol.~12, Mathematical Society of Japan,
  Tokyo, 2003, T\^{o}hoku-Hokkaid\^{o} lectures in arithmetic geometry, With
  appendices by F. Kato and N. Tsuzuki.

\bibitem[AOV11]{AOV}
Dan Abramovich, Martin Olsson, and Angelo Vistoli, \emph{Twisted stable maps to
  tame {A}rtin stacks}, J. Algebraic Geom. \textbf{20} (2011), no.~3, 399--477.

\bibitem[AW18]{AW}
Dan Abramovich and Jonathan Wise, \emph{Birational invariance in logarithmic
  {G}romov-{W}itten theory}, Compos. Math. \textbf{154} (2018), no.~3,
  595--620.

\bibitem[Bas93]{Bass_graphsofgroups}
Hyman Bass, \emph{Covering theory for graphs of groups}, J. Pure Appl. Algebra
  \textbf{89} (1993), no.~1-2, 3--47.

\bibitem[BCG{\etalchar{+}}19]{BCGGM3}
Matt Bainbridge, Dawei Chen, Quentin Gendron, Samuel Grushevsky, and Martin
  Möller, \emph{The moduli space of multi-scale differentials},
  arXiv:1910.13492 [math] (2019).

\bibitem[Ber90]{Berkovich_book}
Vladimir~G. Berkovich, \emph{Spectral theory and analytic geometry over
  non-{A}rchimedean fields}, Mathematical Surveys and Monographs, vol.~33,
  American Mathematical Society, Providence, RI, 1990.

\bibitem[Ber96]{Berkovich_vanishingcyclesII}
\bysame, \emph{Vanishing cycles for formal schemes. {II}}, Invent. Math.
  \textbf{125} (1996), no.~2, 367--390.

\bibitem[Ber99]{Berkovich_smooth=locallycontractible}
\bysame, \emph{Smooth {$p$}-adic analytic spaces are locally contractible},
  Invent. Math. \textbf{137} (1999), no.~1, 1--84.

\bibitem[BPR16]{BPR}
Matthew Baker, Sam Payne, and Joseph Rabinoff, \emph{Nonarchimedean geometry,
  tropicalization, and metrics on curves}, Algebr. Geom. \textbf{3} (2016),
  no.~1, 63--105.

\bibitem[Cap13]{Caporaso_comparingmoduli}
Lucia Caporaso, \emph{Algebraic and tropical curves: comparing their moduli
  spaces}, Handbook of moduli. {V}ol. {I}, Adv. Lect. Math. (ALM), vol.~24,
  Int. Press, Somerville, MA, 2013, pp.~119--160.

\bibitem[CCUW17]{CCUW}
Renzo Cavalieri, Melody Chan, Martin Ulirsch, and Jonathan Wise, \emph{A moduli
  stack of tropical curves}, arXiv:1704.03806 [math] (2017), Forum Math. Sigma,
  to appear.

\bibitem[CMV13]{CMV}
Melody Chan, Margarida Melo, and Filippo Viviani, \emph{Tropical
  {T}eichm\"{u}ller and {S}iegel spaces}, Algebraic and combinatorial aspects
  of tropical geometry, Contemp. Math., vol. 589, Amer. Math. Soc., Providence,
  RI, 2013, pp.~45--85.

\bibitem[CT09]{ConradTemkin_algspaces}
Brian Conrad and Michael Temkin, \emph{Non-{A}rchimedean analytification of
  algebraic spaces}, J. Algebraic Geom. \textbf{18} (2009), no.~4, 731--788.

\bibitem[CV86]{CullerVogtmann}
Marc Culler and Karen Vogtmann, \emph{Moduli of graphs and automorphisms of
  free groups}, Invent. Math. \textbf{84} (1986), no.~1, 91--119.

\bibitem[FvdP04]{FresnelvanderPut}
Jean Fresnel and Marius van~der Put, \emph{Rigid analytic geometry and its
  applications}, Progress in Mathematics, vol. 218, Birkh\"{a}user Boston,
  Inc., Boston, MA, 2004.

\bibitem[Ger83a]{Gerritzen_Siegel}
Lothar Gerritzen, \emph{{$p$}-adic {S}iegel halfspace}, Study group on
  ultrametric analysis, 9th year: 1981/82, {N}o. 3 ({M}arseille, 1982), Inst.
  Henri Poincar\'{e}, Paris, 1983, pp.~Exp. No. J9, 7.

\bibitem[Ger83b]{Gerritzen_TeichmuellerSiegel}
\bysame, \emph{{$p$}-adic {T}eichm\"{u}ller space and {S}iegel halfspace},
  Study group on ultrametric analysis, 9th year: 1981/82, {N}o. 2, Inst. Henri
  Poincar\'{e}, Paris, 1983, pp.~Exp. No. 26, 15.

\bibitem[GH88]{GerritzenHerrlich_extendedSchottky}
L.~Gerritzen and F.~Herrlich, \emph{The extended {S}chottky space}, J. Reine
  Angew. Math. \textbf{389} (1988), 190--208.

\bibitem[GRW16]{GRW}
Walter Gubler, Joseph Rabinoff, and Annette Werner, \emph{Skeletons and
  tropicalizations}, Adv. Math. \textbf{294} (2016), 150--215.

\bibitem[Hej75]{Hejhal_Schottky&Teichmueller}
Dennis~A. Hejhal, \emph{On {S}chottky and {T}eichm\"{u}ller spaces}, Advances
  in Math. \textbf{15} (1975), 133--156.

\bibitem[Her87]{Herrlich_Teichmueller}
Frank Herrlich, \emph{Nichtarchimedische {T}eichm\"{u}llerr\"{a}ume}, Nederl.
  Akad. Wetensch. Indag. Math. \textbf{49} (1987), no.~2, 145--169.

\bibitem[Her90a]{Herrlich_extendedTeichmuellerC}
\bysame, \emph{The extended {T}eichm\"{u}ller space}, Math. Z. \textbf{203}
  (1990), no.~2, 279--291.

\bibitem[Her90b]{Herrlich_extendedTeichmueller}
\bysame, \emph{The non-{A}rchimedean extended {T}eichm\"{u}ller space},
  {$p$}-adic analysis ({T}rento, 1989), Lecture Notes in Math., vol. 1454,
  Springer, Berlin, 1990, pp.~256--266.

\bibitem[Her19a]{Herr_productformula}
Leo Herr, \emph{The {Log} {Product} {Formula}}, arXiv:1908.04936 [math] (2019).

\bibitem[Her19b]{Herrlich_personalcommunication}
Frank Herrlich, \emph{Personal communication}, 2019.

\bibitem[HKP18]{HolmesKassPagani}
David Holmes, Jesse~Leo Kass, and Nicola Pagani, \emph{Extending the double
  ramification cycle using {J}acobians}, Eur. J. Math. \textbf{4} (2018),
  no.~3, 1087--1099.

\bibitem[Hol19]{Holmes}
David Holmes, \emph{Extending the double ramification cycle by resolving the
  {Abel}-{Jacobi} map}, Journal of the Institute of Mathematics of Jussieu
  (2019), 1--29.

\bibitem[Ich00]{Ichikawa_Teichmuellermodularforms}
Takashi Ichikawa, \emph{Generalized {T}ate curve and integral {T}eichm\"{u}ller
  modular forms}, Amer. J. Math. \textbf{122} (2000), no.~6, 1139--1174.

\bibitem[Kat89]{Kato_logstr}
Kazuya Kato, \emph{Logarithmic structures of {F}ontaine-{I}llusie}, Algebraic
  analysis, geometry, and number theory ({B}altimore, {MD}, 1988), Johns
  Hopkins Univ. Press, Baltimore, MD, 1989, pp.~191--224.

\bibitem[Kat94]{Kato_toricsing}
\bysame, \emph{Toric singularities}, Amer. J. Math. \textbf{116} (1994), no.~5,
  1073--1099.

\bibitem[Kat00]{Kato_logsmoothcurves}
Fumiharu Kato, \emph{Log smooth deformation and moduli of log smooth curves},
  Internat. J. Math. \textbf{11} (2000), no.~2, 215--232.

\bibitem[KKN08]{KKN_logAVI}
Takeshi Kajiwara, Kazuya Kato, and Chikara Nakayama, \emph{Logarithmic abelian
  varieties. {I}. {C}omplex analytic theory}, J. Math. Sci. Univ. Tokyo
  \textbf{15} (2008), no.~1, 69--193.

\bibitem[Knu83]{Knudsen_projectivityII}
Finn~F. Knudsen, \emph{The projectivity of the moduli space of stable curves.
  {II}. {T}he stacks {$M_{g,n}$}}, Math. Scand. \textbf{52} (1983), no.~2,
  161--199.

\bibitem[Koe14]{Koebe_UniformisierungIV}
Paul Koebe, \emph{\"{U}ber die {U}niformisierung der algebraischen {K}urven.
  {IV}}, Math. Ann. \textbf{75} (1914), no.~1, 42--129.

\bibitem[KP19]{KassPagani}
Jesse~Leo Kass and Nicola Pagani, \emph{The stability space of compactified
  universal {J}acobians}, Trans. Amer. Math. Soc. \textbf{372} (2019), no.~7,
  4851--4887.

\bibitem[L{\"u}t16]{Luetkebohmert_book}
Werner L{\"u}tkebohmert, \emph{Rigid geometry of curves and their {J}acobians},
  Ergebnisse der Mathematik und ihrer Grenzgebiete. 3. Folge. A Series of
  Modern Surveys in Mathematics [Results in Mathematics and Related Areas. 3rd
  Series. A Series of Modern Surveys in Mathematics], vol.~61, Springer, Cham,
  2016.

\bibitem[MMU{\etalchar{+}}20]{MMUVW}
Margarida Melo, Sam Molcho, Martin Ulirsch, Filippo Viviani, and Jonathan Wise,
  \emph{Tropicalizing the universal jacobian: a logarithmic perspective}, 2020,
  Manuscript in preparation.

\bibitem[Moc99]{Mochizuki_padicTeichmueller}
Shinichi Mochizuki, \emph{Foundations of {$p$}-adic {T}eichm\"{u}ller theory},
  AMS/IP Studies in Advanced Mathematics, vol.~11, American Mathematical
  Society, Providence, RI; International Press, Cambridge, MA, 1999.

\bibitem[Moc02]{Mochizuki_intropadicTeichmueller}
\bysame, \emph{An introduction to $p$-adic {T}eichm\"uller theory},
  Cohomologies $p$-adiques et applications arithm\'etiques (I) (Berthelot
  Pierre, Fontaine Jean-Marc, Illusie Luc, Kato Kazuya, and Rapoport Michael,
  eds.), Ast\'erisque, no. 278, Soci\'et\'e math\'ematique de France, 2002,
  pp.~1--49 (en).

\bibitem[Mum72]{Mumford_uniformization}
David Mumford, \emph{An analytic construction of degenerating curves over
  complete local rings}, Compositio Math. \textbf{24} (1972), 129--174.

\bibitem[MUW17]{MUW}
Martin Moeller, Martin Ulirsch, and Annette Werner, \emph{Realizability of
  tropical canonical divisors}, arXiv:1710.06401 [math] (2017), to appear in J.
  Eur. Math. Soc. (JEMS).

\bibitem[MW17]{MarcusWise}
Steffen Marcus and Jonathan Wise, \emph{Logarithmic compactification of the
  {Abel}-{Jacobi} section}, arXiv:1708.04471 [math] (2017).

\bibitem[MW18]{MolchoWise_logPic}
Samouil Molcho and Jonathan Wise, \emph{The logarithmic {Picard} group and its
  tropicalization}, arXiv:1807.11364 [math] (2018).

\bibitem[Ogu18]{Ogus_logbook}
Arthur Ogus, \emph{Lectures on logarithmic algebraic geometry}, Cambridge
  Studies in Advanced Mathematics, vol. 178, Cambridge University Press,
  Cambridge, 2018.

\bibitem[Ols03]{Olsson_LOG}
Martin~C. Olsson, \emph{Logarithmic geometry and algebraic stacks}, Ann. Sci.
  \'{E}cole Norm. Sup. (4) \textbf{36} (2003), no.~5, 747--791.

\bibitem[PT20]{PoineauTurchetti_SchottkyoverZ}
J{\'e}r{\^o}me Poineau and Daniele Turchetti, \emph{Schottky spaces and
  universal {M}umford curves over {$\mathbb{Z}$}}, 2020, Manuscript in
  preparation.

\bibitem[Rab12]{Rabinoff_Newtonpolygons}
Joseph Rabinoff, \emph{Tropical analytic geometry, {N}ewton polygons, and
  tropical intersections}, Adv. Math. \textbf{229} (2012), no.~6, 3192--3255.

\bibitem[Ran19a]{Ranganathan_logGWexpansions}
Dhruv Ranganathan, \emph{Logarithmic {Gromov}-{Witten} theory with expansions},
  arXiv:1903.09006 [math] (2019).

\bibitem[Ran19b]{Ranganathan_productformula}
\bysame, \emph{A note on cycles of curves in a product of pairs},
  arXiv:1910.00239 [math] (2019).

\bibitem[Rei19]{Reinecke_infinitelevel}
Emanuel Reinecke, \emph{The cohomology of the moduli space of curves at
  infinite level}, arXiv:1911.07392 [math] (2019).

\bibitem[RSPW19a]{RSPWI}
Dhruv Ranganathan, Keli Santos-Parker, and Jonathan Wise, \emph{Moduli of
  stable maps in genus one and logarithmic geometry, {I}}, Geom. Topol.
  \textbf{23} (2019), no.~7, 3315--3366.

\bibitem[RSPW19b]{RSPWII}
\bysame, \emph{Moduli of stable maps in genus one and logarithmic geometry,
  {II}}, Algebra Number Theory \textbf{13} (2019), no.~8, 1765--1805.

\bibitem[Sch15]{Scholze_torsion}
Peter Scholze, \emph{On torsion in the cohomology of locally symmetric
  varieties}, Ann. of Math. (2) \textbf{182} (2015), no.~3, 945--1066.

\bibitem[Ser03]{Serre_trees}
Jean-Pierre Serre, \emph{Trees}, Springer Monographs in Mathematics,
  Springer-Verlag, Berlin, 2003, Translated from the French original by John
  Stillwell, Corrected 2nd printing of the 1980 English translation.

\bibitem[Thu07]{Thuillier_toroidal}
Amaury Thuillier, \emph{G\'{e}om\'{e}trie toro\"{\i}dale et g\'{e}om\'{e}trie
  analytique non archim\'{e}dienne. {A}pplication au type d'homotopie de
  certains sch\'{e}mas formels}, Manuscripta Math. \textbf{123} (2007), no.~4,
  381--451.

\bibitem[Uli17a]{Ulirsch_functroplogsch}
Martin Ulirsch, \emph{Functorial tropicalization of logarithmic schemes: the
  case of constant coefficients}, Proc. Lond. Math. Soc. (3) \textbf{114}
  (2017), no.~6, 1081--1113.

\bibitem[Uli17b]{Ulirsch_tropisquot}
\bysame, \emph{Tropicalization is a non-{A}rchimedean analytic stack quotient},
  Math. Res. Lett. \textbf{24} (2017), no.~4, 1205--1237.

\bibitem[Uli19]{Ulirsch_nonArchArtin}
\bysame, \emph{Non-{A}rchimedean geometry of {A}rtin fans}, Adv. Math.
  \textbf{345} (2019), 346--381.

\bibitem[Viv13]{Viviani_tropvscomp}
Filippo Viviani, \emph{Tropicalizing vs. compactifying the {T}orelli morphism},
  Tropical and non-{A}rchimedean geometry, Contemp. Math., vol. 605, Amer.
  Math. Soc., Providence, RI, 2013, pp.~181--210.

\bibitem[Yu16]{Yue_logCYI}
Tony~Yue Yu, \emph{Enumeration of holomorphic cylinders in log {C}alabi-{Y}au
  surfaces. {I}}, Math. Ann. \textbf{366} (2016), no.~3-4, 1649--1675.

\bibitem[Yu18]{Yu_Gromovcompactness}
\bysame, \emph{Gromov compactness in non-archimedean analytic geometry}, J.
  Reine Angew. Math. \textbf{741} (2018), 179--210.

\bibitem[Yu20]{Yue_logCYII}
\bysame, \emph{Enumeration of holomorphic cylinders in log {Calabi}-{Yau}
  surfaces. {II}. {Positivity}, integrality and the gluing formula},
  arXiv:1608.07651 [math] (2020), to appear in Geometry \& Topology.

\end{thebibliography}

\end{document}